\documentclass[10pt]{amsart} 

\usepackage{amsmath,amstext,amscd, amssymb, color, mleftright} 
\usepackage{pinlabel}
\usepackage[normalem]{ulem}
\usepackage[colorlinks=true, pdfstartview=FitV, linkcolor=blue, citecolor=blue, urlcolor=blue]{hyperref}
 \usepackage[abs]{overpic}
\usepackage{xcolor,varwidth}
\usepackage{enumitem, scalerel}

\makeatletter
\def\@tocline#1#2#3#4#5#6#7{\relax
\ifnum #1>\c@tocdepth 
  \else 
    \par \addpenalty\@secpenalty\addvspace{#2}%
\begingroup \hyphenpenalty\@M
    \@ifempty{#4}{%
      \@tempdima\csname r@tocindent\number#1\endcsname\relax
 }{%
   \@tempdima#4\relax
 }%
 \parindent\z@ \leftskip#3\relax \advance\leftskip\@tempdima\relax
 \rightskip\@pnumwidth plus4em \parfillskip-\@pnumwidth
 #5\leavevmode\hskip-\@tempdima #6\nobreak\relax
 \ifnum#1<0\hfill\else\dotfill\fi\hbox to\@pnumwidth{\@tocpagenum{#7}}\par
 \nobreak
 \endgroup
  \fi}
\makeatother
\let\oldtocsection=\tocsection
\let\oldtocsubsection=\tocsubsection
\let\oldtocsubsubsection=\tocsubsubsection
\renewcommand{\tocsection}[2]{\hspace{0em}\oldtocsection{#1}{#2}}
\renewcommand{\tocsubsection}[2]{\hspace{1em}\oldtocsubsection{#1}{#2}}
\renewcommand{\tocsubsubsection}[2]{\hspace{2em}\oldtocsubsubsection{#1}{#2}}

\definecolor{cerulean}{rgb}{0,.48,.65} 
\definecolor{magenta}{rgb}{.5,0,.5} 
\definecolor{dred}{rgb}{.5,0,0} 
\definecolor{green}{rgb}{0,.5,0} 
\definecolor{blue}{rgb}{0,0,1} 
\definecolor{black}{rgb}{0,0,0} 
\definecolor{dgreen}{rgb}{0,.3,0} 
\definecolor{vdred}{rgb}{.3,0,0} 
\definecolor{red}{rgb}{1,0,0} 
\definecolor{salmon}{rgb}{0.98,0.50,0.45} 
\definecolor{gray}{rgb}{.5,.5,.5} 
\definecolor{seagreen}{rgb}{0.13,0.70,0.67} 
\definecolor{chartreuse}{rgb}{0.40,0.80,0.00}
\definecolor{cornflower}{rgb}{0.39,0.58,0.93} 
\definecolor{gold}{rgb}{0.65,0.45,0.00}
 
\theoremstyle{plain}

\newtheorem{thm}{Theorem}[section]
\newtheorem{lemma}[thm]{Lemma}

\newtheorem{cor}[thm]{Corollary}

\newtheorem*{conjproblem*}{The conjugacy problem}
\newtheorem*{0twistedproblem*}{The 0-twisted-conjugacy problem}
\newtheorem*{Htwistedproblem*}{The H-twisted conjugacy problem}
\newtheorem*{Itwistedproblem*}{The I-twisted conjugacy problem}

\theoremstyle{definition}

\newtheorem{remark}[thm]{Remark}
\newtheorem{Remark}[thm]{Remark}

\newtheorem{Open questions}[thm]{Open questions}
\newtheorem{Open question}[thm]{Open question}
\newtheorem{Open problems}[thm]{Open problems}
\newtheorem{Open problem}[thm]{Open problem}

\def\Bbb{\mathbb}
\def\bar{\overline}

\def\Z{\Bbb{Z}}
\def\N{\Bbb{N}}

\def\ni{\noindent}

\def\Dist{\hbox{\rm Dist}}
\def\Area{\hbox{\rm Area}}
\def\dehn{\hbox{\rm Dehn}}
\def\Cay{\hbox{\rm Cay}}

\def\Width{\hbox{\rm Width}}

\def\CAT{\hbox{\rm CAT}}
\def\Ker{\hbox{\rm Ker}}

\def\CL{\hbox{\rm CL}}
\def\PCL{\hbox{\rm PCL}}

\def\F+L{\hbox{$\textup{F}\!_+\textup{L}$}}

\def\ssm{\smallsetminus}

\def\SL{\hbox{\rm SL}}

\def\onto{{\kern3pt\to\kern-8pt\to\kern3pt}}

\def\<{\langle}
\def\>{\rangle}
\def\|{{\ |\ }}

\def\G{\Gamma}

\newcommand{\set}[1]{\left\{#1\right\}}

\newcommand{\abs}[1]{\left|#1\right|}
\renewcommand{\ni}{\noindent}

\def\*{^{\star}}

\def\l{\lambda}

\begin{document}

\title[Conjugator length in finitely presented groups]{Conjugator length in finitely presented groups} 

\author{M.\ R.\ Bridson, T.\ R.\ Riley and A.\ W.\ Sale}

\date \today  

\begin{abstract}
\ni The conjugator length function of a finitely generated group $G$ gives the minimal upper bound on the length of a conjugator for a pair of words that represent conjugate elements in $G$, as a function of the sum of the lengths of the words.   Here, we seek  to promote the systematic  study of conjugator length functions by explaining their significance,  by surveying what is known about them and by explaining fundamental techniques and examples.

  \smallskip
\ni \footnotesize{\textbf{2020 Mathematics Subject Classification:  20F65, 20F10, 20F06}}  \\ 
\ni \footnotesize{\emph{Key words and phrases:} conjugacy problem,  conjugator length}
\end{abstract}

\thanks{The first author thanks the mathematics department of Stanford University for its hospitality and support. The second author gratefully acknowledges the financial support of the Simons Foundation (TRR--Simons Collaboration Grant 318301) and the National Science Foundation (TRR, NSF GCR-2428489).  For the purpose of open access, the authors have applied a CC BY public copyright licence to any author accepted manuscript arising from this submission. ORCID: 0000-0002-0080-9059 (MRB),  0009-0004-3699-0322 (TRR), 0000-0001-5698-9020 (AWS).}

\maketitle

\setcounter{tocdepth}{1}
\tableofcontents 


\section{Introduction} \label{intro}

The purpose of this article
 is to promote the systematic study of {\em conjugator length functions},  setting it alongside
the  systematic study of Dehn functions, which emerged from Gromov's pioneering work on geometric group theory in the late 1980s.
Whereas Dehn functions provide the most natural measure of the complexity of a direct approach to the word problem for
finitely presented groups,   conjugator length functions provide the most natural measure of a direct approach to the conjugacy
problem.  We will present foundational results that reveal the geometric nature of conjugator length functions and  describe some of the main techniques used to bound and calculate  these functions.
We will also survey what is known about the conjugator length functions of various classes of groups, and
we will summarize recent advances concerning the challenge of deciding which functions arise as conjugator length functions.

\subsection{The conjugator length function} \label{sec: the defn}

 Suppose $G$ is a group with a finite generating set $A$.  We write $u \! \sim \!  v$ when words $u$ and $v$ in
 the alphabet $A^{\pm 1}$ represent conjugate  elements  of $G$.
 The conjugator length $\CL(u,v)$ of such $u$ and $v$ is the length (see \ref{sec: Notation and terminology}) of a shortest word $w$ such that $uw=wv$ in $G$.
 The \emph{conjugator length function} $\CL : \N \to \N$ is defined so that $\CL(n)$ is the least integer  $N$ with $\CL(u,v) \leq N$ for all words $u$ and $v$ for which $u \! \sim v$ in $G$ and  whose lengths satisfy $|u|+|v|\le n$.    

Because the idea underlying the conjugator length function is so natural, it is hard to point definitively to its first appearance in the literature, but
the first formal definition might be in \cite{BC} where Brick \& Corson
suggested the name  \emph{annular width function}.

The nature of the arguments used to calculate conjugator length functions makes it natural to take words
as the input data for $\CL(u,v)$,  but  $\CL(u,v)$ depends only on the group elements that $u$ and $v$
represent and not on the choice of representing words.

\subsection{Our aims} This article should serve as a starting point for researchers interested in the systematic
study of quantitative approaches to the conjugacy problem in finitely presented groups.  The corresponding study of
the word problem is well-served by introductory articles and surveys such as \cite{Bridson6, BRS, Gersten, Sapir}, with
which we will compare and contrast.  In Section~\ref{why} we explain the  significance of the conjugator length function from various points of view, including
its close relationship with the study of annuli in Riemannian manifolds.  In Section~\ref{Preliminaries} we set out the foundational results that are central to the study of conjugator length functions as geometric invariants of  finitely presented groups:
we review \emph{annular diagrams}, which are the analogues of  van~Kampen diagrams appropriate to the study of conjugacy and we prove a counterpart to  van Kampen's lemma.  In Section~\ref{survey} we  survey the known estimates of conjugator length for prominent families of groups and we describe fundamental examples.  Finally, in Section~\ref{sec: what functions?}, we summarize recent results that address the question of what functions are conjugator length functions of finitely presentable groups.

\subsection{Novel results} \label{results section}

This article, while predominantly a survey,  includes  some original results.  In Theorems~\ref{Heisenberg group thm}  and \ref{Stallings CL} we prove that the conjugator length functions of the 3-dimensional integral Heisenberg group and of Stallings' group grow quadratically. We extend this to higher dimensional integral Heisenberg groups in Remark~\ref{higher diml Heisenberg}.  We give an elementary proof that the  conjugator length functions of     the Baumslag--Solitar groups   $\textup{BS}(1, m) = \langle a,s \mid s^{-1} a s = a^{m} \rangle$ are linear for all $m \geq 2$, which is a special case of a general result of the third author in \cite{Sale3}.  Theorem~\ref{Free-product with amalgam theorem} concerns finitely generated groups $\Lambda$ with  a cyclic subgroup whose distortion function is $d(n)$: we explain how amalgamating  $\Lambda$ with 3-dimensional integral Heisenberg gives a group whose conjugator length function grows at least as fast as $d(n)$.   Theorem~\ref{central distortion versus CL} sets out another construction which promotes distortion functions to conjugator length functions; this theorem achieves matching upper and lower bounds, but its utility is hampered by a hypothesis that is hard to verify.

\subsection{Terminology} \label{sec: Notation and terminology}

A \emph{word} $w$ is a string $a^{\varepsilon_1}_1 \cdots a^{\varepsilon_n}_n$ where each $a_i$ is in some given alphabet and each $\varepsilon_i \in \{ \pm 1\}$.  We write $|w| =n$.  A \emph{cyclic permutation} of $w$ is a word $a^{\varepsilon_{i+1}}_{i+1} \cdots a^{\varepsilon_n}_n a^{\varepsilon_{1}}_{1} \cdots a^{\varepsilon_i}_i$ for some $i \in \{1, \ldots, n \}$.  We say $w$ is \emph{reduced} if there is no $i$ for which $a_i = a_{i+1}$ and $\varepsilon_i = - \varepsilon_{i+1}$, and it is \emph{cyclically reduced} if all its cyclic permutations are reduced.       For an element $g$ of a  group $G$ with generating set $S$, we write $\abs{g}$ or  $\abs{g}_S$ for the  length of the shortest word on $S^{\pm 1}$ representing   $g$.   Our conventions are   $[x,y] :=x^{-1}y^{-1}xy$ and $x^y := y^{-1} x y$.

\section{Why conjugator length functions?} \label{why}

\subsection{The conjugacy problem.}  \label{the CP and algorithms}     
The \emph{word problem} for a finitely generated group $G$ asks for an algorithm that, on input a word $u$ in the generators, declares whether or not $u$ represents the identity in $G$. 
The \emph{conjugacy problem} asks  for an algorithm that, on input words $u$ and $v$, declares whether or not $u \! \sim \! v$ in $G$.  The word problem is the restriction of the conjugacy problem  to the case where $v$ is the empty word.   The systematic study of both problems originates in the work of Max Dehn \cite{Dehn2}.

The Dehn function has gained prominence as the most natural quantification of a direct approach to the word problem
for a finitely presented group.   It is  bounded above by a recursive function if and only if the word problem in the group is solvable \cite[Theorem~2.1]{Gersten}. Correspondingly, for a  finitely generated group $G$ with a solvable word problem, 
 $\CL(n)$ is bounded from above by a recursive function  if and only if there is an algorithm that solves the conjugacy problem in $G$: for the ``only if" direction,  given $u$ and $v$ of total length $n$,   the hypothesized solution to the word problem allows one to check whether $uw=wv$ in $G$, running through  all words $w$ of length at most $\CL(n)$.  This simple procedure  also solves   the \emph{conjugacy search problem},  which asks for an algorithm that, on input a pair of words $u$ and $v$ such that $u \!  \sim \! v$, returns a word $w$ with $uw=wv$ in $G$. 
In the opposite direction,  if there is an algorithm solving the conjugacy problem in $G$, then that algorithm also solves the word problem and we see that $\CL(n)$ is a computable function by arguing as follows: given $n \in \N$,  we can list all the pairs of words $u$ and $v$ of total length at most $n$ such that $u \sim v$ in $G$, and for each of these pairs we can calculate $\CL(u,v)$ by working through all words $w$  in non-decreasing length-order,  checking whether $uw=wv$.   

There are  finitely presented groups which have decidable word problem but undecidable conjugacy problem    \cite{Bokut, Collins1969, Miller1, OS6}. The conjugator length function of such a group is  non-recursive.

The hypothesis that $G$ has a solvable word problem in the above discussion is necessary. In striking recent work,  Gillis \& Wagner~\cite{GiWa2} prove that there are finitely presented groups whose conjugator length functions are recursive --- indeed, quadratic --- and yet their word problems, and therefore also their conjugacy problems, are undecidable. (We return to this in Section~\ref{sec:S-machines}.)

\subsection{The conjugator length function is a group invariant} \label{s: not coarse} Different finite generating sets for the same group give equivalent $\CL$ functions in the following sense.  For $f,g: \mathbb{N} \to \mathbb{N}$, define $f \preceq g$ if there exists $C>0$ such that $f(n) \leq Cg(  Cn+C ) +Cn +C$ for all $n \geq 0$, and define
  $f  \simeq g$ if $f \preceq g$ and $g \preceq f$.  
 
Unfortunately, conjugator length is not a quasi-isometry invariant.  The starkest illustration of this comes from 
the work of Collins \& Miller  \cite{CollinsMiller}.  They exhibited groups which are finitely presented and have decidable word problem and yet the decidability of the conjugacy problem  does not pass to or from index-2 subgroups.  (A finitely generated group $G$ is    quasi-isometric to its  finite-index subgroups, and such a subgroup is finitely presentable if and only if $G$ is.)

\subsection{The combinatorial geometry of the conjugacy problem.}  Despite not being stable under quasi-isometries,
the  conjugator length function is fundamentally geometric in nature. The geometry associated 
to conjugator length is analogous to the rich geometry that connects the word problem for finitely presented groups 
to isoperimetric problems, specifically,  the study of minimal-area discs filling loops in the universal cover 
of any Riemannian manifold with the given group as its fundamental group.  In the setting of the word problem,  the link from algebra to Riemannian geometry passes through the
 intermediary of combinatorial geometry,  where the filling discs appear in the guise
 of  \emph{van Kampen diagrams}, which portray ways of reducing words that represent the identity to the empty word by applying defining relations.   In the context of the conjugacy problem for finitely presented groups, it is
 the geometry of {\em annuli} rather than discs that is relevant and,  correspondingly,  the study of van Kampen diagrams
 is replaced by the study of {\em annular diagrams},  the systematic study of which began in    Schupp's thesis \cite{SchuppThesis} and was pushed further in \cite{LS1, Sch1}.   Whenever words $u$ and $v$ represent conjugate  elements   other than the identity in a finitely presented group $G$, there is an \emph{annular  diagram}, with oriented 1-cells
 labeled  by generators,  such that $u$ labels the outer boundary cycle and $v$ labels the  inner boundary cycle;
if a  word $w$ labels a path in the 1-skeleton from the basepoint of the outer cycle to the basepoint of the
inner cycle,  then   $uw=wv$.  We shall discuss such diagrams in more detail in Section~\ref{Preliminaries}; they 
provide the most important tool in the study of   conjugator length. 

\subsection{The geometry of annuli and the width of free homotopies}

Let $M$ be a smooth compact Riemannian manifold without boundary. Fix a base point $\ast$ on the unit circle $S^1$. Write $\rho_0 \sim \rho_1$ when a pair of loops $\rho_0, \rho_1 : S^1 \to M$ is freely homotopic, which is to say that there is continuous map $H: S^1 \times [0,1] \to M$ with $H(s,i) = \rho_i(s)$ for all $s \in S^1$ and $i = 0,1$.    For such $\rho_0$ and $\rho_1$, define the  \emph{width} $W(\rho_0, \rho_1)$ to be the infimum of the lengths  of the curves $t \mapsto H(\ast,t)$ among all such $H$.  Then define $\Width_M: [0, \infty) \to [0, \infty)$ so that $$\Width_M(\ell) = \inf \set{ W(\rho_0, \rho_1) \mid \text{ loops } \rho_0 \sim \rho_1 \text{ in } M \text{ of total length } |\rho_0| +  |\rho_1| \leq \ell  }.$$
   
In this setting the width function qualitatively agrees with the conjugator length function of the fundamental group of $M$.  To make this precise, we expand the definition of $f \simeq g$ for functions $f,g: \mathbb{N} \to \mathbb{N}$ to apply likewise to functions $f,g: [0,\infty) \to [0,\infty)$, and then we expand it further to compare  functions $f:[0,\infty) \to [0,\infty)$ with functions $ g:\mathbb{N} \to \mathbb{N}$  by extending the latter to be constant on   intervals $[n, n+1)$.

 We remind the reader that for every integer $d\ge 4$, every finitely presented group is the fundamental group of 
 a closed, smooth Riemannian  manifold of dimension $d$.
   
   \begin{thm}\label{t:width}
  If $G$ is the fundamental group of  a closed, smooth  Riemannian manifold  $M$,  then 	$\Width_M \simeq \CL_G$.  
   \end{thm}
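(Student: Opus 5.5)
The plan is the standard Švarc--Milnor style comparison between the universal cover $\widetilde M$ and the Cayley graph of $G$, adapted to free homotopies instead of based null-homotopies. Fix a basepoint $x_0\in M$ and a lift $\tilde x_0\in\widetilde M$. Choose a finite generating set $A$ for $G$ given by loops at $x_0$ of length at most some constant $\lambda$, for instance those $g$ with $d(\tilde x_0, g\tilde x_0)\le 2\,\mathrm{diam}(M)+1$. By Švarc--Milnor the orbit map $g\mapsto g\tilde x_0$ is a $(K,K)$-quasi-isometry. Since $\simeq$ is insensitive to the choice of finite generating set, we may work with this $A$.

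\emph{Step 1: $\CL_G\preceq \Width_M$.} Let $u,v$ be words with $u\sim v$ and $|u|+|v|\le n$. Realise them as loops $\rho_u,\rho_v$ at $x_0$ by concatenating the generator loops, so their total length is at most $\lambda n$. Since $u,v$ are conjugate in $\pi_1(M,x_0)$, the loops are freely homotopic. Take a free homotopy $H$ whose track $\tau(t)=H(\ast,t)$ has length at most $\Width_M(\lambda n)+1$. Then $\tau$ is a loop at $x_0$, and the standard track lemma gives $[\rho_u]\,[\tau]=[\tau]\,[\rho_v]$ in $\pi_1(M,x_0)$ (with the appropriate orientation convention). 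It remains to bound the word length of the element $w=[\tau]$. Lift $\tau$ to a path from $\tilde x_0$ to $w\tilde x_0$ of the same length $L$. The quasi-isometry then gives $|w|_A\le KL+K$. Hence $\CL(n)\le K\,\Width_M(\lambda n)+C$.

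\emph{Step 2: $\Width_M\preceq\CL_G$.} Let $\rho_0\sim\rho_1$ have total length at most $\ell$.
\begin{enumerate}
\item Join $\rho_i(\ast)$ to $x_0$ by geodesics $\gamma_i$ of length at most $\mathrm{diam}(M)$, producing based loops $\gamma_i^{-1}\rho_i\gamma_i$.
\item Approximate each based loop by a word $u_i$ with $|u_i|\le K'(|\rho_i|+1)$. To do this, subdivide the lift into pieces of length at most $1$. Join each subdivision point to a nearby orbit point $g\tilde x_0$ by a path of length at most $\mathrm{diam}(M)$. Consecutive orbit points are then at distance at most $2\mathrm{diam}(M)+1$, so they differ by a generator. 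The loop is homotopic rel basepoint to the concatenated generator loops, and this is a homotopy of bounded geometry.
\item Since $\rho_0$ and $\rho_1$ are freely homotopic, the based loops are conjugate, so $u_0\sim u_1$. Choose $w$ with $u_0w=wu_1$ and $|w|\le \CL(K'(\ell+2))$.
\item Realise $w$ as a loop $\sigma$ at $x_0$ of length at most $\lambda|w|$. Then $\gamma_0\sigma\gamma_1^{-1}$ is a path from $\rho_0(\ast)$ to $\rho_1(\ast)$ of length at most $\lambda\,\CL(K'(\ell+2))+2\mathrm{diam}(M)$.
\end{enumerate}
Next we need a free homotopy from $\rho_0$ to $\rho_1$ whose basepoint track is exactly this path. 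Start from the converse of the track lemma: if $\alpha$ is a path from $\rho_0(\ast)$ to $\rho_1(\ast)$ with $\alpha^{-1}\rho_0\alpha\simeq\rho_1$ rel basepoint, then there is a free homotopy with track $\alpha$. To build it, first slide the basepoint along $\alpha$, which is a free homotopy from $\rho_0$ to $\alpha^{-1}\rho_0\alpha$ with track $\alpha$. Then follow this by a based homotopy to $\rho_1$, which has constant track. By construction $u_0w=wu_1$, so this condition holds for $\alpha=\gamma_0\sigma\gamma_1^{-1}$. Taking the infimum gives $\Width_M(\ell)\le\lambda\,\CL(K'\ell+2K')+C$.

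\emph{Main obstacle.} The substantive issues are bookkeeping, and I expect them to be the main place where care is needed. First, in Step 1 the orientation convention for the track (whether the relation reads $u w = w v$ or $w u = v w$) must match the paper's convention $uw=wv$; if it does not, replace $w$ by $w^{-1}$, which has the same length. Second, $\Width_M$ is defined with total length at most $\ell$ for real $\ell$, while $\CL$ is integer-valued. Both compose with the $\simeq$ conventions that were extended to functions on $[0,\infty)$, so the affine distortions $\lambda n$ and $K'(\ell+2)$ are absorbed by the definition of $\preceq$. The trivial-conjugacy case causes no special trouble, since the track argument applies there too.
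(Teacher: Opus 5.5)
Your proposal is correct and is essentially the paper's argument. For $\CL_G\preceq\Width_M$ you lift the basepoint track and bound the resulting conjugator via the \v{S}varc--Milnor quasi-isometry. For $\Width_M\preceq\CL_G$ you move basepoints to $x_0$, approximate the based loops by words, and realise a short conjugator as the track of a free homotopy. Your slide-then-based-homotopy construction is the same thing as the paper's filling of the lifted loop $uwv^{-1}w^{-1}$ by a disc in $\widetilde{M}$. You also read $\Width_M(\ell)$ as a supremum over pairs of loops, which is correct: the paper's proof uses it that way despite the $\inf$ in its displayed definition.
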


\begin{proof}[Proof outline] 
The proof of this theorem is much easier than that of the corresponding theorem concerning Dehn functions, which is
Gromov's Filling Theorem (a detailed proof of which is contained in  \cite{Bridson6}), but it follows a similar outline.  
In order to compare the geometry of $G$ to the geometry of $M$, one  maps the Cayley graph of $G$  into  the universal
cover $\widetilde{M}$, first by fixing a basepoint $x_0$ so that $G$ can be mapped to $\widetilde{M}$ by $g\mapsto g.x_0$;
a mapping of the Cayley graph $\iota: \Cay(G)\to \widetilde{M}$   is then determined by a choice,  for each generator $a\in G$,  of a rectifiable path $\sigma_a$ from $x_0$ to $a.x_0$:  
the edge $[1,a]\subset \Cay(G)$ is sent to $\sigma_a$ and $\iota$ is the $G$-equivariant of this assignment. 
The \v{S}varc-Milnor Lemma \cite[page~140]{BH} assures us that $\iota$  is a Lipschitz map that has a 
quasi-inverse; let the Lipschitz constant be $\lambda$. 

 For words $u,v,w$ in the generators, 
if $uw=wv$ in $G$,  then there is a loop in  $\Cay(G)$ that begins at the vertex $1$ and is labelled $uwv^{-1}w^{-1}$.
The image of this loop under $\iota$ is a loop $\gamma$ in $\widetilde{M}$; 
we retain the labels $u,w, v^{-1},w^{-1}$ on the corresponding arcs of this loop.  As $\widetilde{M}$ is simply-connected, 
$\gamma$  can be filled with a disc. This disc projects to an annulus in $M$ connecting a loop representing $u\in\pi_1(M,x_1)$ to a loop representing $v\in\pi_1(M,x_1)$, where $x_1\in M$ is the image of $x_0\in\widetilde{M}$ under the 
covering map  $\pi:\widetilde{M}
\to M$.  (In this annulus, 
the images of the two  arcs of $\gamma$ labeled $w$ coincide.)
The sum of the lengths of the loops representing $u, v\in \pi_1(M,x_1)$ is at most $\lambda (|u|+|v|)$ and the length of the arc along the annulus connecting them (i.e.\ the image of the path labeled $w$) is at most $\lambda |w|$.  This construction shows that for ``word-like" loops
$U,V:S^1 \to M$ corresponding to words $u,v$ in the generators of $G=\pi_1(M, x_1)$, if $U$ and $V$ are freely homotopic
(equivalently $u\sim v$ in $G$), then   $W(U,  V) \le \lambda\ \CL(u,v)$.  (A loop is  {\em word-like} if it is the image of an
edge-path from $\Cay(G)$ under the composition $\pi\circ\iota$.)

A standard approximation argument in the manner of 
the \v{S}varc-Milnor Lemma, shows there are constants $A, B,C$ such that every loop of length $\ell$ in $M$ is freely homotopic
to a word-like loop of length at most $A\ell + B$ via a homotopy of width at most $C$.  And the lengths of word-like loops in
$M$ differ from the lengths of the words to which they correspond by at most a factor of $\lambda$.  Together, these
observations imply that $\Width_M(n) \preceq \CL_G(n)$.  

For the reverse inequality,   given words $u$ and $v$  with $|u|+|v|\le n$ and
$u\sim v$ in $G$, we consider the word-like loops $U$ and $V$ 
based at $x_1\in M$ that these words label; the sum of the lengths of these loops is at most $\lambda n$. We
connect $U$ to $V$ by a homotopy $H$ of width $L<W(U,  V)+1$.  By definition,
$L$ is the length of the arc $\omega: t \mapsto H(\ast,t)$ that the basepoint traces along the (singular) annulus in $M$ 
that is the image of $H$.  We lift the loop $U\omega V^{-1}\omega^{-1}$ to a loop in $\widetilde{M}$ based at $x_0$. 
We then
replace the first lift of $\omega$ in this loop
by a path with the same endpoints that is the image  under $\iota: \Cay(G) \to \widetilde{M}$ of an edge path -- let
$w$ be the label on this path,  where $|w|$ is chosen to be minimal.  Finally, we replace the other lift of $\omega$ in our 
loop (i.e.~the lift beginning at $x_0$) with the image under $\iota$ of the path labeled $w$ that begins at $1\in \Cay(G)$.
By construction,  $|w| \le \lambda |\omega| \le \lambda\, (W(U,  V)+1) \le \lambda (\Width_M(\lambda n) + 1)$ and
$uw=wv$ in $G$. Therefore $\CL_G(n)\preceq \Width_M(n)$.
\end{proof}

\begin{remark}
We stated Theorem \ref{t:width} as a result about Riemannian manifolds because that captures the correspondence that we want to fix in the reader's mind between conjugator length and the geometry of annuli.  But the proof that we have
sketched uses very little Riemannian geometry: it works equally well for a much larger class of  geodesic spaces, e.g.~piecewise Euclidean complexes with finitely many cells. 
\end{remark}

\subsection{Relationships to other invariants.} \label{sec: rel to other invariants} The conjugator length function is related to the \emph{annular Dehn function}
of a finitely presented group $G$, which was defined by Brick \& Corson in \cite{BC} as the least upper bound on the number of faces (the \emph{area}) required in minimal-area annular diagrams portraying conjugacies $u \sim v$ with $|u|+|v|\le n$.
Brick \& Corson established basic inequalities relating conjugator length,  the annular Dehn function of $G$, and the Dehn function: in short, the annular Dehn function bounds the other two functions from above, but can itself be bounded from above by their composition.   Gillis \& Riley \cite{GiRi} recently exhibited examples showing that these bounds are not tight in general and that the three invariants are independent in the sense that no two determine the third.

Conjugator length complements \emph{conjugacy growth} --- the number of conjugacy classes intersecting the ball of radius $n$ --- whose systematic study was promoted by Guba \& Sapir in \cite{GS1}.    

The \emph{permutation conjugator length function} $\textup{PCL}$ of \cite{AS} is a variant on conjugator length that records the length of a minimal length word  required to conjugate some cyclic permutation of $u$ to some cyclic permutation of $v$.  It is pertinent in instances where it admits sublinear upper bounds,  which we will see in Section~\ref{sec:hyp groups} is the case for hyperbolic groups.     

It seems unreasonable to expect a relationship between the \emph{Dehn function} and the conjugator length function
of an arbitrary finitely presented group,  but it is a challenge to determine the full scope of pairs $(f,g)$ of functions $\N \to \N$ for which there is a finitely presented group with Dehn function $\simeq f$ and conjugator length function $\simeq g$.   This challenge has recently been taken up Gillis \& Wagner in \cite{GiWa2}.  They show that for all pairs of \emph{reasonable} (our terminology) functions $f,g :  \N \to \N$  such that $f(n) \succeq n^2$ and $g(n) \succeq n^3$ there is such a finitely presented group --- here, \emph{reasonable}  means that  $f$ must already be equivalent to the Dehn function of \emph{some} finitely presented group, and $g$ must be positive superadditive (that is, $g(q) \geq 1$ for all $q \in \N \ssm \{0\}$ and $g(p+q) \geq g(p) + g(q)$ for all $p, q \in \N$) and be the time function of an $S$-machine.

\subsection{Equations over groups.} The conjugacy search problem is an example of an equation over a group:  viewing $u$ and $v$ as coefficients, we try to solve for $x$ in the equation $ux = xv$.  This is a fundamental example of a \emph{quadratic} equation in a group.  The solutions to quadratic equations in finitely presented groups  can be represented by diagrams on surfaces \cite{SchuppQuadratic} --- in the case of $ux = xv$,  these are annular diagrams.  The study of equations over finitely 
presented groups is a rich and challenging subject, with important early work on equations over free groups by Makanin \cite{Makanin} and Razborov \cite{Razborov, Razborov2}, which has been extended greatly by Sela~\cite{SelaI} {\em et seq.},
 Kharlampovich \& 
 Myasnikov \cite{KaMy} and others.  Closer to the spirit of this article, in \cite{SelaIII} Sela provides a quantitative analysis of the Makanin--Razborov diagrams that encode solution sets of systems of equations over limit groups.   Kharlampovich \&  Vdovina \cite{KV} give linear estimates for the lengths of solutions of quadratic equations over free groups in terms of the sums of the lengths of the coefficients.  And in  \cite{KMTV}, working with  Mohajeri and Taam, they generalize 
 their results to torsion-free hyperbolic groups and to toral relatively hyperbolic groups.

\subsection{Diophantine problems.}  Diophantine equations are extensively studied in number theory.  Hilbert's tenth problem, which was famously solved in the negative in  the 1970s, asks  whether there is (what we now interpret as) an algorithm which, on input a  polynomial equation with integer coefficients, declares whether or not the equation admits an integer solution.  There has also been a lot of work on  quantifying the complexity of instances of this problem (e.g., Smale's Fifth Problem) and bounding the sizes of solutions, when they exist, in terms of the sizes of the coefficients --- e.g., \cite{BFRT, Kornhauser, LagariasTalk}. This last challenge is of particular relevance in the setting of conjugator length functions.  In our proofs of Theorems~\ref{Heisenberg group thm}  and \ref{Stallings section}  estimates on conjugator length functions come from estimates on the sizes of solutions to a corresponding system of linear Diophantine equations.  More elaborate examples of the interplay between such systems of 
equations and conjugator length functions can be found in \cite{BrRi2, BrRi1}.  Such examples hold out the enticing prospect
that the study of conjugator length functions might provide a   locus of connection between group theory and this area of number theory.

\subsection{Cryptography.}  \label{Cryptography}
The conjugacy search problem  underlies a public key protocol due to Ko, Lee et al.\ \cite{KoLee}.  The idea behind the protocol is that a group $G$ is published together with some element $g \in G$.  The prototype for $G$ is the braid group on $2n$ strings --- group elements are communicated as words in Garside  normal-form   on a standard finite generating set.  Alice and Bob seek to establish a shared private key with which to encrypt their communication.  They privately choose from $G$   one element each,  $a$ and $b$ say,   in such a way that  $a$ and $b$ are sure to commute. For example, in the braid group setting, it is agreed that Alice will take  $a$ to be a braid on the first $n$ strings and Bob will take $b$  to be a braid on the remaining $n$ strings. Alice computes the normal form word for $g^a$ and sends it to Bob, who likewise computes and sends $g^b$  to Alice.  They then both calculate $g^{ab}= g^{ba}$, which serves as their common key.              
 
An eavesdropper tapping into their communication could break this system by recovering $a$ from the pair $g$ and $g^a$ or $b$ from $g$ and $g^b$ and then calculating  $g^{ab}$ for themselves and (modulo an issue with centralizers) this is the conjugacy search problem. 

A   brute-force approach to cracking the conjugacy search problem is to try all words of length $1$, then all of length $2$, and so on.  If conjugators are sure to be sufficiently long, then this  length-based attack is hopeless.   So it would seem that a fast growing conjugator length function for $G$ is desirable.

In truth, the situation is more subtle.  The conjugator length functions  of mapping class groups,    and in particular of braid groups, are linear \cite{MM00, Tao}.  Nevertheless, the number of possible  words that one has to try to find a conjugator grows exponentially with  length,  so this brute-force approach can be prohibitively expensive even without particularly long conjugators.    On the other hand, many cases can by bypassed.  See   \cite{GG-M} and references therein for insights on how this plays out in braid groups.
 
Conjugacy search problems and, therefore, conjugator length functions are significant for other cryptographic protocols.  Anshel-Anshel-Goldfeld \cite{AAG} provides  another influential example.  See the survey \cite{MSU}. 

\subsection{Applications to operator algebras and K-theory} The existence of a  polynomial upper bound on conjugator length is one of a set  of conditions that Ji, Ogle, \& Ramsey \cite{JOR}  require of the groups for which they prove the  $\ell^1(G)$ Idempotent Conjecture  and the $\ell^1$-Stronger-Bass Conjecture.    
The same condition appeared in the K-theory calculations of   \cite{ChakYam}.

\section{Van Kampen's Lemma adapted for conjugacy} \label{Preliminaries}  \label{general simplifications}

\subsection{Van Kampen diagrams and annular  diagrams}

Van Kampen diagrams and annular  diagrams are the basic objects of study in the approach to decision problems
in group theory that proceeds via combinatorial geometry.

For background and further reading, see  \cite{Bridson6, LS1,  SchuppThesis, Sch1} and  \cite[page~454]{BrH}.

Suppose $G$ is a group with finite presentation $\langle A \mid R \rangle$.  A \emph{van~Kampen diagram} over $\langle A \mid R \rangle$
 for a word $w$ on $A^{\pm 1}$ is a   finite,  planar,  contractible, combinatorial 2-complex whose edges are directed and labelled by elements of $A$ in  such a way that around each face (starting from some vertex)  one reads a word in $R^{\pm 1}$ and around the boundary
cycle,  starting from a specified basepoint,  one reads $w$.  Here \emph{planar} means that a planar embedding is fixed, not just that it exists.   Such a diagram portrays a scheme for proving that $w=1$ in $G$ by
expressing it, in the free group $F(A)$,  as a product of conjugates of the defining relations.

\begin{lemma}[van~Kampen's Lemma] \label{vKs lemma} Let $G=\langle A \mid R \rangle$ be a 
 finitely presented group. A word $w$ in the letters $A^{\pm 1}$ represents $1$ in $G$ if and only if  $w$ admits a van~Kampen diagram over $\langle A \mid R \rangle$.
\end{lemma}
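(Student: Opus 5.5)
Both directions go through the intermediate algebraic statement that $w=1$ in $G$ if and only if, in the free group $F(A)$, $w$ is freely equal to a product $\prod_{i=1}^{m} x_i^{-1} r_i^{\varepsilon_i} x_i$ with $r_i\in R$, $\varepsilon_i=\pm1$ and $x_i$ words on $A^{\pm1}$. This is the definition of $G$ as $F(A)$ modulo the normal closure of $R$.

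For the \emph{if} direction, I would induct on the number of faces of a van~Kampen diagram $D$ for $w$. If $D$ has no faces, it is a planar tree, and its boundary word $w$ freely reduces to the empty word, so $w=1$ in $G$. Otherwise, pick a face $f$ whose boundary meets $\partial D$ in at least one edge; such a face exists because $D$ is planar and finite. Choose a path $\beta$ in the 1-skeleton from the basepoint to a vertex of $f$. Deleting the open face $f$ together with an exposed boundary edge of $f$ gives a diagram $D'$ with one fewer face. The boundary word of $D'$ is freely equal to $w\cdot x^{-1}r^{\mp1}x$ up to cyclic bookkeeping, where $x$ is the label of $\beta$ and $r$ is the relator read around $f$. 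Induction then shows $w=1$ in $G$.

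The step needing most care is the planar-topology bookkeeping: $D$ may fail to be a topological disc, with cut vertices or dangling edges. In that case I would decompose $D$ along cut vertices into a tree of discs and spurs and treat each disc component separately. Alternatively, one can compare boundary words directly: the boundary loop of $D$ is null-homotopic in $D^{(1)}$ together with the faces, and $\pi_1$ of the 1-skeleton is free with the face boundaries, conjugated via paths to the basepoint, normally generating the kernel.

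For the \emph{only if} direction, write $w$ as freely equal to $\prod_i x_i^{-1}r_i^{\varepsilon_i}x_i$. Build a \emph{bouquet of lollipops} diagram: for each $i$, attach at a common basepoint a stem labelled $x_i$ ending at a polygonal face with boundary $r_i^{\varepsilon_i}$, and arrange these in planar cyclic order. Its boundary word is exactly the unreduced product. Next I would realise the free reduction from this word to $w$. Free reductions are cancellations of adjacent pairs $aa^{-1}$; folding the two corresponding consecutive boundary edges together keeps the complex planar and contractible, and if the two edges already share both endpoints, the enclosed region is discarded. Free insertions needed to reach $w$ (when $w$ is unreduced) are handled by attaching a spike. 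The expected main obstacle is checking that each fold preserves planarity and contractibility, especially when a fold closes off a subdiagram. I would handle that case by deleting the enclosed bounded region, which only reduces the face count and leaves the boundary label unchanged.
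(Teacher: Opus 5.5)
Your proposal is correct and is essentially the argument the paper relies on. The paper does not reprove van~Kampen's Lemma, but in the proof of Lemma~\ref{annular diagrams lemma} it uses exactly your lollipop diagram: it folds consecutive boundary edges labelled $a,a^{-1}$, discards the enclosed subdiagram when a fold would break planarity, and attaches spikes to recover an unreduced word, referring to \cite{Bridson6} for details. Your induction on faces (or the $\pi_1$ argument) is the standard proof of the other direction.

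One wording fix concerns the discard step. You must remove the whole closed subdiagram bounded by $e_1\cup e_2$ except its attaching vertex; deleting only the open bounded region would leave a non-contractible loop. Done this way, the step deletes $aa^{-1}$ from the boundary word exactly as a fold does, rather than leaving the label unchanged.
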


Let $\Area(w)$ be the least integer  $N$ such that there is a van~Kampen diagram for $w$ with $N$ faces.  The \emph{Dehn function} $\dehn : \N \to \N$ of $\langle A \mid R \rangle$ is $$\dehn(n) \ := \ \max\set{ \ \Area(w) \mid w =1 \textup{ in } \Gamma  \textup{ and } |w| \leq n \  }.$$  
Equivalently, $\dehn(n)$ is the minimal $N$ such that if  $w =1$ in $G$ and $|w| \leq n$, then $w$ 
is equal in the free group $F(A)$ to a product of  $N$ or fewer conjugates of the defining relations and their inverses.

An \emph{annular diagram} for a pair of words $u$ and $v$ is a finite planar combinatorial 2-complex $\Omega$ homotopy equivalent to a circle or, in the degenerate case  (where $u$ or $v$ is the empty word), a point; its edges are directed and are labelled by  elements of $A$ in such a way that around each face one reads some word in $R^{\pm 1}$, and anticlockwise around the two boundary cycles from vertices  $\star_u$ and $\star_v$ one reads words $u$ and $v$, respectively.   

A van~Kampen diagram or annular diagram is \emph{reduced} when it does not contain an adjacent pair of `cancelling' 2-cells --- that is, a pair of $2$-cells with a common edge such that  around the attaching 2-cycles of each cell, starting with that common edge, one reads the same word, 	clockwise for one and anticlockwise for the other. 	  A  van~Kampen diagram or annular diagram that is of minimal area among all diagrams for its boundary word(s) will be reduced.

Here  is the conjugacy analogue of van~Kampen's Lemma:

\begin{lemma}
	[Cf.\ Lemma~5.2 of \cite{LS1}, also  \cite{SchuppThesis, Sch1}] 
	\label{annular diagrams lemma}
	Suppose $G$ is group with finite presentation $\langle A \mid R \rangle$.
If a pair of words $u$ and $v$ admits an annular diagram, then $u \sim v$ in $G$.  Conversely, if $u \sim v$ in $G$ and
these words  do not represent $1$ in $G$, then  they admit  a non-degenerate annular diagram.
\end{lemma}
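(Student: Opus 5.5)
The plan is to reduce both directions to van~Kampen's Lemma (Lemma~\ref{vKs lemma}) by cutting an annular diagram open along a path, and conversely by gluing a disc diagram into an annulus.

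\emph{Annular diagram implies conjugacy.} Let $\Omega$ be an annular diagram for $u$ and $v$. If $\Omega$ is degenerate, say $v$ is empty, then $\Omega$ is a disc diagram for $u$ and van~Kampen's Lemma gives $u=1\sim v$. Otherwise, since $\Omega$ is connected I choose an edge path $p$ in the $1$-skeleton from $\star_u$ to $\star_v$, with label $w$. I will take $p$ to be simple; a shortest path has this property. Cutting $\Omega$ along $p$ yields a singular disc diagram (possibly with extra cut points where $p$ meets the boundary, which is harmless). Its boundary, read from $\star_u$, is $u\,w\,v^{-1}w^{-1}$ up to orientation conventions. Every face of this disc still carries a relator, so van~Kampen's Lemma gives $uwv^{-1}w^{-1}=1$, hence $uw=wv$ and $u\sim v$. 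A cleaner route avoids the geometric cutting altogether. The $2$-complex $\Omega$ maps cellularly to the presentation complex $K$, so the two boundary cycles are freely homotopic in $K$ via the annulus. Free homotopy classes of loops in $K$ correspond to conjugacy classes in $\pi_1K=G$, which again gives $u\sim v$.

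\emph{Conjugacy implies annular diagram.} Suppose $uw=wv$ in $G$ with $u\neq 1$. Then $u w v^{-1} w^{-1}=1$, so by van~Kampen's Lemma there is a disc diagram $D$ whose boundary reads $u\,w\,v^{-1}w^{-1}$. I glue the two boundary arcs labelled $w$ to each other, matching letter by letter and respecting orientations. Topologically, gluing two disjoint arcs of the boundary circle of a disc in this orientation produces an annulus. The faces still read relators, and the two boundary cycles read $u$ (from $\star_u$) and $v$ (from $\star_v$), the latter anticlockwise once orientations are checked.

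\emph{Main obstacle.} The difficulty is that $D$ need not be a topological disc. It may be a tree of discs and arcs, and its boundary path need not be embedded, so the two $w$-arcs may overlap each other or overlap the $u$- and $v$-arcs. The gluing then produces something that is not a planar annulus. My first attempt at a fix is to note that the quotient is still a planar $2$-complex with the right boundary cycles, because identifying two boundary paths of a singular disc that bound a complementary region of the plane preserves planarity. I would then argue that the quotient is homotopy equivalent to a circle, or to a point in the degenerate case. The hypothesis $u\neq 1$ is exactly what rules out the annular region collapsing, since a collapse would make the $u$-cycle null-homotopic, i.e.\ $u=1$. If this becomes messy, the fallback is a more algebraic construction. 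Write $uwv^{-1}w^{-1}$ freely as $\prod_i x_i r_i^{\pm1} x_i^{-1}$. Build the annulus as a ``wheel'': start from a circle labelled $v$, attach lollipops (a stem labelled $x_i$ ending in a petal labelled $r_i$), and fold by free reductions, following the standard proof of van~Kampen's Lemma. Here the inner boundary is now a circle rather than a point. The folding preserves planarity, and $u\neq1$ guarantees that the resulting complex has nontrivial $\pi_1$ and therefore remains an annulus.
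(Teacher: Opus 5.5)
Your forward direction (cutting along a simple path from $\star_u$ to $\star_v$, or mapping to the presentation complex) and your fallback ``wheel'' construction are essentially the paper's proof. The paper writes $u$ freely as $U=(wvw^{-1})(w_1r_1w_1^{-1})\cdots(w_mr_mw_m^{-1})$ and folds the corresponding lollipop diagram over $\langle A\mid R\cup\{v\}\rangle$, discarding enclosed subdiagrams whenever a fold would break planarity. It then uses $u\neq 1$ to see that the single $v$-face survives, and deletes that face's interior; this is the same as keeping your $v$-petal hollow throughout.

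Base the proof on that construction, not on directly gluing the two $w$-arcs. You leave the planarity and homotopy-type claims for that gluing unjustified, and the paper explicitly sets this route aside because the arcs need not be disjoint simple paths. Also make explicit two details the paper handles. First, the $v$-circle must hang from the basepoint on a stem labelled $w$. Second, folding only produces the free reduction of $u$, so the cancelled letters must be restored as spurs on the outer boundary.
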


\begin{proof} 
Suppose $\Omega$, as above, is an annular diagram  for the pair $u$ and $v$.    
Let  $\rho$ be a simple path in the 1-skeleton of $\Omega$ from $\star_u$ to $\star_v$.  Let $w$ be the word one reads along $\rho$.  Following $\rho^{-1}$ from $\star_v$ to $\star_u$, then following the boundary cycle labeled $u$,  then returning  from $\star_u$ to $\star_v$ along $\rho$, and then  following $v$ around the other boundary cycle gives a loop labeled $w^{-1}uwv^{-1}$.  This loop is contractible in $\Omega$, so $w^{-1}uw=v$ and $u \sim v$ in $\G$. Indeed,  \emph{cutting}  $\Omega$ along  $\rho$  yields a   van~Kampen diagram for $w^{-1}uwv^{-1}$.

For the converse, suppose  $u \sim v$ in $G$.  Then $w^{-1}uwv^{-1} =1$ for some word $w$, and so by van~Kampen's Lemma there is a van~Kampen diagram $\Delta$ for $w^{-1}uwv^{-1}$ over $\langle A \mid R \rangle$.  One might hope to obtain an annular diagram  by identifying the boundary arcs of $\Delta$ labelled $w$ and $w^{-1}$. However, such an identification is  problematic when, for instance, the boundary arcs are not disjoint simple paths.  The following treatment is more careful.

We expand on the proof of     \cite[Lemma~5.2]{LS1}.  
Because $u$ is conjugate to $v$ in $G$, it is equal in $F(A)$ to a word    
\begin{equation} \label{lollipop word}
	 U  \ = \ (w v w^{-1}) \ (w_1  r_1  w_1^{-1}) \ \cdots \  (w_m  r_m   w_m^{-1}) 
\end{equation}
for some $m \geq 0$, some words  $w, w_1, \ldots, w_m$, and some $r_1, \ldots, r_m \in R^{\pm 1}$.

\begin{figure}[ht]
\begin{overpic}
{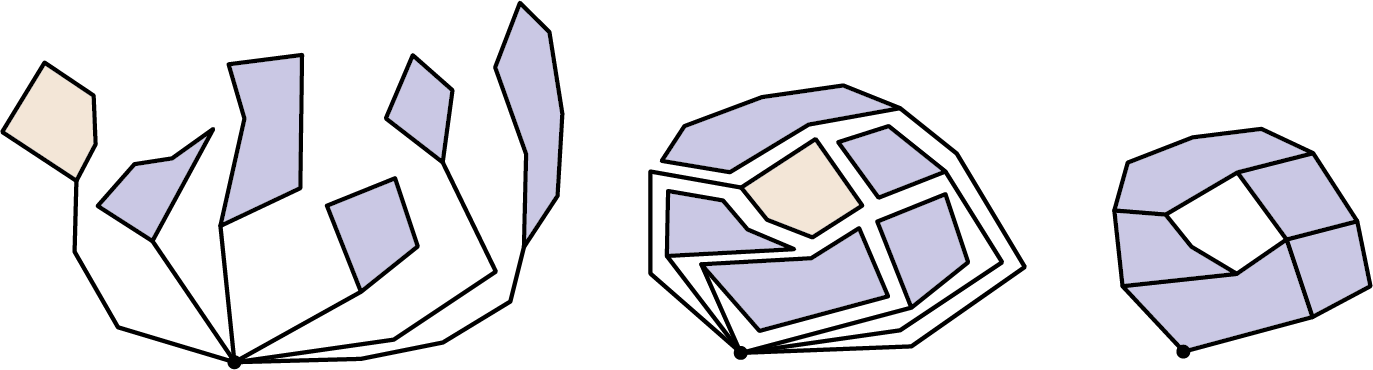}
 \put(0,66){\small{$v$}}     
 \put(35,56){\small{$r_1$}}     
 \put(60,79){\small{$r_2$}}     
 \put(84,48){\small{$r_3$}}     
\put(102,77){\small{$r_4$}} 
\put(131,86){\small{$r_5$}} 
 \put(13,16){\small{$w$}}     
 \put(34,16){\small{$w_1$}}     
 \put(56,25){\small{$w_2$}}     
\put(67,16){\small{$w_3$}}
\put(106,25){\small{$w_4$}}
\put(115,6){\small{$w_5$}}
\end{overpic}
 \caption{Folding up a lollipop diagram to make an annular diagram}
  \label{fig:lollipop}
\end{figure}

A `lollipop diagram'  for $U$ is a van~Kampen diagram for $U$ over $\langle A \mid R \cup \set{v} \rangle$ with base vertex $\star_u$, and with $m+1$ faces labelled by  $v$, $r_1$, \ldots, $r_m$, each connected to $\star_u$ by an edge-path labelled by $w$,    $w_1$, \ldots, $w_m$, respectively. We seek to form a van~Kampen diagram $\Delta$ for $u$ over $\langle A \mid R \cup \set{v} \rangle$ by folding up this lollipop diagram in the manner of a proof of van~Kampen's lemma  as illustrated in Figure~\ref{fig:lollipop} and explained in detail in \cite{Bridson6}: find a pair of edges $e_1$ and $e_2$ in the boundary circuit that are successive (meaning that the terminal vertex of $e_1$ is the initial vertex of $e_2$) and are labeled by  $a$ and $a^{-1}$, respectively, for some $a \in A^{\pm 1}$; then identify $e_1$ and $e_2$ so as to remove $a a^{-1}$ from the boundary word, as shown left in Figure~\ref{fig:fold}.  We can repeat until the boundary word becomes the reduced form of  $u$. 
If we preserve planarity during this process,  then we can complete the proof by attaching appropriate additional edges to the outer boundary cycle so as to make the boundary word exactly $u$: for example, if $u\equiv u_1aa^{-1}u_2$ and the boundary cycle is $u_1u_2$, then we get a diagram with boundary label $u$ by introducing a new vertex $x$ outside the diagram 
and a new edge labelled $a$  joining the terminus of the arc labeled $u_1$ to $x$.

\begin{figure}[ht]
\begin{overpic}
{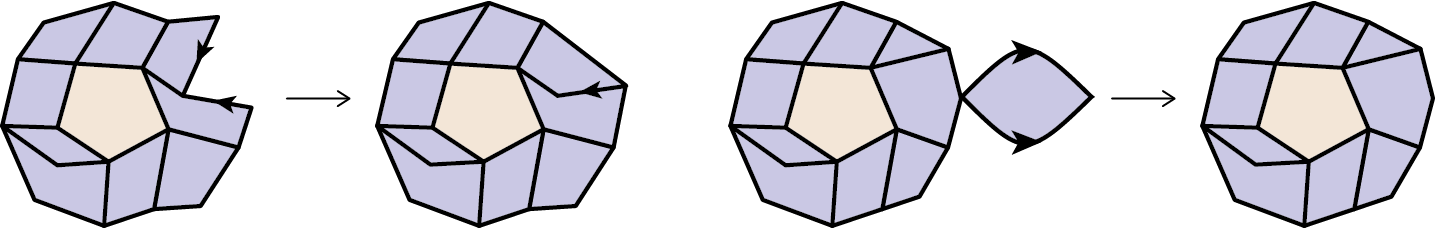}
 \put(48,24){\small{$e_2$}}     
 \put(39.5,42){\small{$e_1$}}     
 \put(56,32.5){\small{$a$}}     
 \put(51,39){\small{$a$}}    
 \put(140,27){\small{$a$}}    
 \put(247,13){\small{$a$}}     
 \put(247,46){\small{$a$}}  
 \put(244,24.5){\small{$e_2$}}     
 \put(244,34.5){\small{$e_1$}}     
\end{overpic}
 \caption{Folding together a pair of edges}
  \label{fig:fold}
\end{figure}

But the proviso about planarity is a serious one:
 if the initial vertex of the edge  $e_1$ and the terminal vertex of $e_2$ are the same, then identifying $e_1$ and $e_2$ will break the diagram's planarity. The remedy is that in this circumstance, we instead remove the subdiagram that $e_1$ and $e_2$ together enclose, as shown on the  right in Figure~\ref{fig:fold}.   Because this operation may be necessary,  some faces of the lollipop diagram may not  give rise to a face in  $\Delta$.   Nevertheless, the faces of $\Delta$ will be a subset of those in the lollipop diagram  (see Remark 4.2.5 in \cite{Bridson6}).  As   $u \neq 1$ in $G$, these faces must include one (and no more than one) with boundary word $v$. (In particular, the vertex $\star_v$ cannot have been removed.)     Removing the interior of this special face gives an annular diagram $\Omega$ for the pair $u$  and $v$ --- we read $u$ and $v$ around its outer and inner boundary cycles from $\star_u$ and  $\star_v$, respectively.  
\end{proof}

The case where $u$ and $v$ represent the identity, excluded in Lemma~\ref{annular diagrams lemma}, is not significant to the study of conjugator length functions --- after all, in that circumstance,  $u=v$ in $G$,  and so  $\CL(u,v)=0$.

\begin{remark} Every van~Kampen diagram admits a  continuous, label-preserving, combinatorial map to the Cayley 2-complex of the presentation. This is not the case for annular diagrams,  unless $u$ and $v$    represent the identity:
there is a continuous, label-preserving, combinatorial map to the presentation 2-complex, but this does not lift to the universal cover.
\end{remark}

We can reinterpret the definition of conjugator length (from Section~\ref{sec: the defn}) in terms of annular diagrams.   

\begin{lemma} \label{lem:annular diagram path}
For words $u$ and $v$ that do not represent the identity in a finitely presented group $G$, 
if $u\sim v$ then $\CL(u,v)$ is the least integer $L$ for which  there is an annular diagram $\Omega$ for the pair $u$ and $v$  in which there  is a path  of length  $L$ from $\star_u$  to $\star_v$ in the 1-skeleton of $\Omega$. 
\end{lemma}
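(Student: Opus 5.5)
We establish the two inequalities separately. For the first, suppose $\Omega$ is an annular diagram for $u$ and $v$ containing an edge-path $\rho$ of length $L$ from $\star_u$ to $\star_v$. Let $w$ be the word read along $\rho$. The argument from the first half of the proof of Lemma~\ref{annular diagrams lemma} still applies: the loop $\rho^{-1}\cdot(\text{outer cycle})\cdot\rho\cdot(\text{inner cycle})^{-1}$ is null-homotopic in $\Omega$. The one point to check is that that argument picked $\rho$ simple, which we cannot assume here. But simplicity is never actually needed. The loop above runs once around the annulus along the outer cycle and then back once along the inner cycle. Hence its image in $\pi_1(\Omega)\cong\Z$ is trivial, whatever the path $\rho$ is. Mapping $\Omega$ to the presentation $2$-complex by the label-preserving combinatorial map, we get $w^{-1}uwv^{-1}=1$ in $G$. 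Therefore $uw=wv$, and $\CL(u,v)\le |w| = L$.

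For the reverse inequality, let $w$ be a shortest word with $uw=wv$ in $G$, so that $|w|=\CL(u,v)$. I would rerun the converse half of the proof of Lemma~\ref{annular diagrams lemma} with this specific $w$. Write $u$ in $F(A)$ as the word $U=(wvw^{-1})\prod_i w_ir_iw_i^{-1}$ of \eqref{lollipop word}, using exactly this $w$. This is possible because $u^{-1}wvw^{-1}=1$ in $G$, so it is a product of conjugates of relators in $F(A)$. In the lollipop diagram there is an edge-path labelled $w$ from $\star_u$ to the vertex $\star_v$ on the $v$-face, so its length is $|w|$. Folding and pruning subdiagrams only ever identifies edges and vertices or deletes material. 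Therefore the image of this path in the folded diagram $\Delta$ is still an edge-path from $\star_u$ to $\star_v$, of length at most $|w|$. Edges may collapse onto one another, but the count of edges traversed never increases.

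The main obstacle is confirming that the path is not destroyed by the pruning operation on the right of Figure~\ref{fig:fold}, which deletes an enclosed subdiagram. I would handle it as the paper does for $\star_v$. Since $u\ne1$ in $G$, the $v$-face survives, so it is not inside any deleted region. A removed region is bounded by the loop $e_1e_2$. If the $w$-path passed through that region, it would have to enter and leave through the single vertex where the region is pinched off. We can then reroute the path through that vertex, deleting the excursion, and this only shortens it. The boundary-completion step, which attaches extra edges to the outer cycle, does not affect the path. Removing the interior of the $v$-face then gives an annular diagram $\Omega$ containing a path from $\star_u$ to $\star_v$ of length at most $\CL(u,v)$. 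Combined with the first inequality, its length is exactly $\CL(u,v)$, and so the minimum in the statement equals $\CL(u,v)$.
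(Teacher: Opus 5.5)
Your proposal is correct and follows the paper's proof: the first inequality comes from the first half of Lemma~\ref{annular diagrams lemma}, and the reverse inequality comes from building the lollipop diagram with $w$ a shortest conjugator, then tracking the $w$-path through the folding and discarding any excursion into a pruned subdiagram. Your extra remarks are that the path $\rho$ need not be simple because $\pi_1(\Omega)\cong\Z$, and that a pruned subdiagram meets the rest of the diagram only at its pinch vertex; these make explicit details the paper leaves implicit.
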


 \begin{proof}
We review our proof of Lemma~\ref{annular diagrams lemma}.  The inequality $\CL(u,v)\le L$ comes from the observation that
if $w$ is the label on the path hypothesized  in $\Omega$, then $uw=wv$ in $G$, as per the proof of Lemma \ref{annular diagrams lemma}.  For the reverse inequality, consider the edge-path    in the lollipop diagram along which we read  the subword $w$ of $U$ from \eqref{lollipop word}. This path, which we also call $\rho$, starts at $\star_u$ and ends at a vertex $\star_v$ on the boundary of the face corresponding to $v$.  We may take $\rho$ to have length equal to the length of a shortest word $w$ such that $uw=wv$ in $G$.  It would be enough to argue that $\rho$ persists through the folding process so as to yield a path in  $\Delta$ of length at most $|w|$ from  $\star_u$  to $\star_v$ in $\Omega$.  The instances of the folding process that  jeopardize this are when a  subdiagram $S$ between $e_1$ and $e_2$ is removed. In that event we remove any subpath of $\rho$ through the interior of $S$.
  	 \end{proof}

 \subsection{$t$-Corridors and $t$-Annuli}\label{subsea:corridors} 

We continue our account of the basic tools used in the diagrammatic approach to decision problems. 

Let  $G=\<A\mid R\>$
be a finitely presented group. Figure~\ref{fig:corridor} illustrates the following definitions.

Suppose some $t \in A$ has the property that every defining relation in which $t^{\pm 1}$ occurs contains exactly one $t$ and one $t^{-1}$.  For example, this is the case in the standard presentation of an  HNN-extension with stable letter $t$.  
Following \cite{BridsonGersten}, we define a \emph{$t$-corridor} in a van~Kampen or annular diagram $\Delta$ to be either
\begin{itemize}
	\item a union of distinct, closed 2-cells 
	$\sigma_1, \ldots, \sigma_m$ such that  $\sigma_i$ shares a $t$-edge with $\sigma_{i+1}$ for $i=1, \ldots, m-1$ and  the other $t$-edges in the boundaries of $\sigma_1$ and $\sigma_m$ lie in $\partial \Delta$,   or
	\item a single $t$-edge in the 1-dimensional part of the diagram; that is,  a $t$-edge  not in the boundary of any 2-cell.
\end{itemize} 
 The length of a $t$-corridor is the  number of $2$-cells it contains.
 A \emph{$t$-annulus} (or \emph{annular $t$-corridor}) is defined similarly,  except that there is an additional $t$-edge  shared by $\sigma_m$ and $\sigma_1$ and no $t$-edge in the closure of the annulus lies in $\partial\Delta$.     A $t$-corridor in an annular diagram is \emph{radial} when it connects a $t$-edge on the inner boundary component to a $t$-edge on the outer boundary component.  Each  $t$-annulus  has a {\em core loop}, which is an embedded loop in $\Omega$ that intersects each open 2-cell of the $t$-annulus in an
 arc that connects the midpoints of the two $t$-edges in the boundary of the cell. 
 A  $t$-annulus  is \emph{essential} when it  encloses the inner boundary; more precisely,  the core loop of the annulus
 is homotopic to each of the boundary cycles of $\Omega$.

\begin{figure}[ht]
\begin{overpic}
{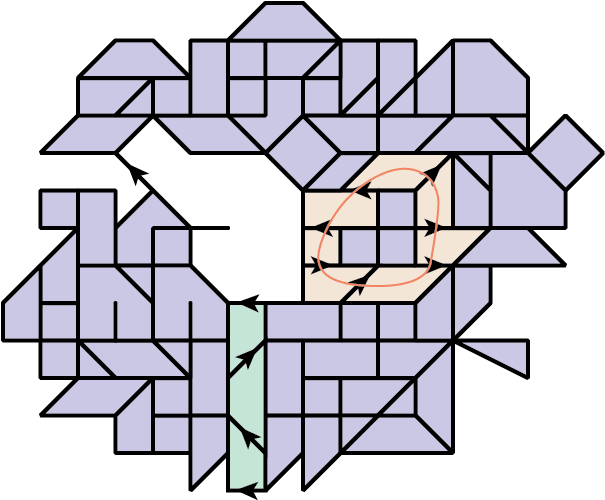}
\end{overpic}
 \caption{Take all the edges in this figure that are decorated with arrows to be $t$-edges.  It then depicts an annular diagram with two radial $t$-corridors, one of length $0$ and one of length $3$, and one annular $t$-corridor, whose core loop is inessential. }
  \label{fig:corridor}
\end{figure}

No  $t$-corridor or $t$-annulus  can cross itself or another $t$-corridor or $t$-annulus.  Thus $t$-corridors and $t$-annuli  partition   diagrams into a
collection of subdiagrams. Many arguments proceed via an analysis of the resulting decomposition;
for example, the famous Lemmas of Britton and Collins (see, for example, \cite{Lyndon}), concerning the word and the conjugacy problems (respectively) in  HNN-extensions,  can be expressed in terms of the possible
configurations of corridors in van~Kampen and annular diagrams.

 \begin{remark} The union of the open 2-cells in a $t$-corridor and the $t$-edges between them form an open disc embedded
 in the ambient diagram $\Delta$, but  it  is important to note that the sides of the corridor need not be embedded arcs, although they are, crucially,  non-crossing paths. Likewise, the two boundary cycles of a $t$-annulus are non-crossing loops.
 \end{remark}

\subsection{HNN-extensions and sub-diagram simplification} \label{sec:removing t-rings}

We work here towards Corollary~\ref{cor:HNN simplify} in which we describe circumstances in which pairs of conjugate words always admit annular diagrams with  no inessential annuli. This is desirable   because it lets us argue that the subdiagrams complementary to the corridors are diagrams over presentations of   groups which are  simpler in that their defining generators and defining relators are strict subsets of those of the original group.   

To get there we will examine (i) \emph{cutting  a subdiagram  out of a diagram} and (ii) \emph{gluing in a replacement subdiagram}.  However,  providing a precise meaning for (i) will require care, and (ii) is also not straightforward because it jeodardises the diagram's planarity.  So here are  details.

	\begin{enumerate}[label=(\roman*)]

\item  \textbf{Cutting  a subdiagram  out of a diagram.}   Suppose $\Delta$ is a finite 2-complex embedded in the plane.  Let $\Gamma$ denote its 1-skeleton.   Our notion of \emph{cutting  a subdiagram  out of $\Delta$} will lead to \emph{subdiagrams} that map into $\Delta$ injectively on their interiors, but not necessarily on their boundaries.  Accordingly, an edge-path $\rho$ in $\Gamma$ around which we cut to extract a \emph{subdiagram} need not be injective; however, it must be \emph{non-crossing}.  

To be precise about what we mean, we look to the \emph{ribbon graph} $\hat{\Gamma}$, which is a planar representation of $\Gamma$ in which each vertex $v$ is replaced by a disc $D_v$ and each edge $e$ by a topological rectangle $R_e$.  As shown top in Figure~\ref{fig:non-crossing}, collapsing each disc to a vertex and each rectangle to an edge retracts $\hat{\Gamma} \onto \Gamma$.  We say that $\rho$ is \emph{non-crossing} when it lifts to a simple loop $\hat{\rho}$ in $\hat{\Gamma}$.

 \begin{figure}[ht]
\begin{overpic}
{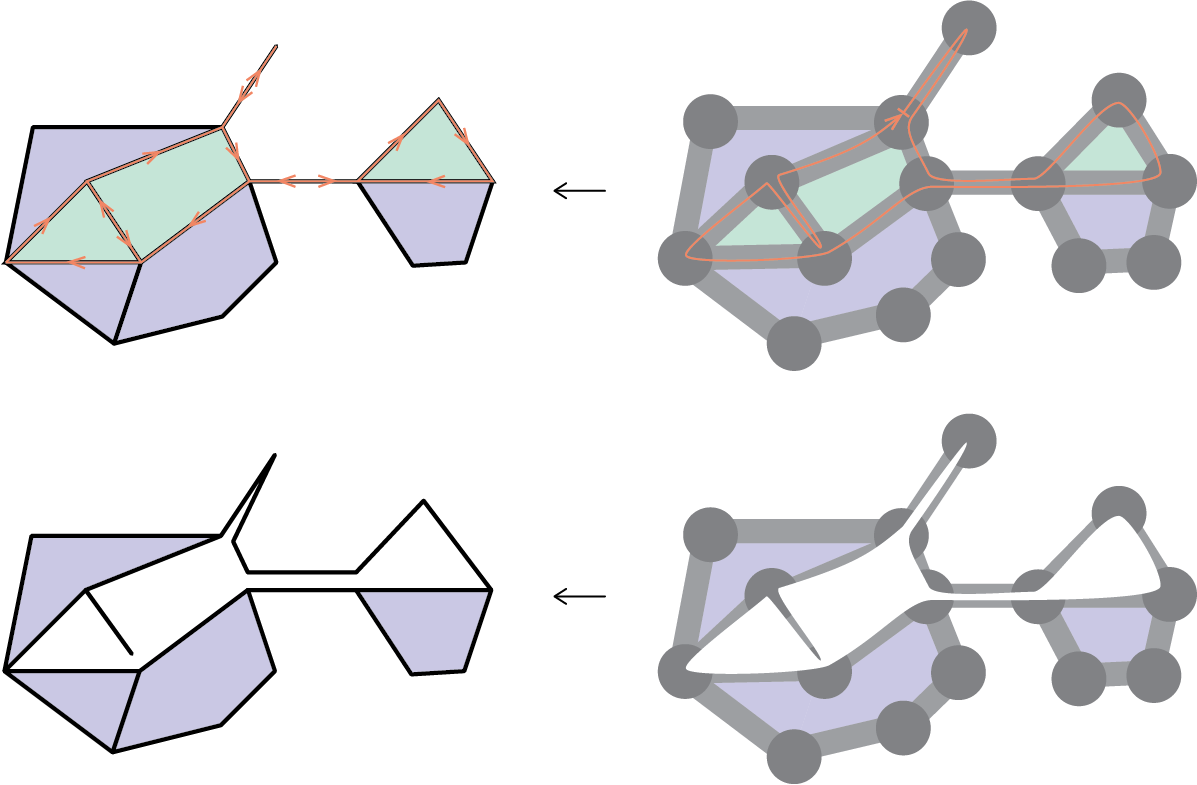}
      \put(62,162){\small{$\rho$}}  
      \put(228,162){\small{$\hat{\rho}$}}  
        \put(-10,168){\small{$\Gamma = \Delta^{(1)}$}}  
  \put(155,168){\small{$\hat{\Gamma}$}}  
        \put(-10,68){\small{$\Delta_{\rho}^{(1)}$}}  

\end{overpic}
 \caption{Cutting a subdiagram out of a diagram along a non-crossing edge-loop}
  \label{fig:non-crossing}
\end{figure}

Let $\text{int}( \hat{\rho} )$ denote the open subset of the plane enclosed by    $\hat{\rho}$.  The planar 2-complex $\Delta_{\rho}$ obtained by \emph{cutting $\Delta$ along $\rho$} is that to which $\hat{\Delta} \smallsetminus \text{int}( \hat{\rho} )$ retracts in the manner of the example in Figure~\ref{fig:non-crossing}  --- the connected components of $(\cup_v D_v) \smallsetminus \text{int}( \hat{\rho} )$  are mapped bijectively to the vertices of $\Delta_{\rho}$; the connected components of $(\cup_e R_e) \smallsetminus \text{int}( \hat{\rho} )$ are mapped bijectively to the edges  of $\Delta_{\rho}$; and the faces of $\Delta$ that are outside $\rho$ are mapped to  bijectively to the faces  of $\Delta_{\rho}$.

\item   \textbf{Sewing a van Kampen diagram into a hole.} Suppose $\Omega$ is an annular diagram
 whose outer and inner boundary cycles  are labeled by (non-empty) words $u$ and  $v$, respectively.   We define an operation, illustrated in Figure~\ref{fig:island}, for combining $\Omega$   with a van Kampen diagram $\Delta$ for $v$ to produce a van~Kampen diagram
 $\Omega \sqcup_v \Delta$ for $u$.

As per step I in the figure, place $\Delta$  within the hole in $\Omega$ and identify the appropriate vertex on the interior boundary of $\Omega$ with the appropriate vertex on  $\partial \Delta$, so as to give a planar annular diagram for the pair $u$, $v v^{-1}$.  Next, identify the edge $e_1$ labelled by the first letter, say $a$, of $v v^{-1}$ with the edge $e_2$ labelled by the last letter $a^{-1}$. As in our proof of Lemma~\ref{annular diagrams lemma}, the only circumstance is which this fails to preserve planarity is when $e_1$ and $e_2$ together make up the entire boundary of a subdiagram (for the word $aa^{-1}$).  In this event, we discard that subdiagram, leaving only the one or two vertices where it was attached to the rest of the diagram.   Repeat until the hole is closed up.  The result is 
 $\Omega \sqcup_v \Delta$

\end{enumerate}

  \begin{figure}[ht]
\begin{overpic}
{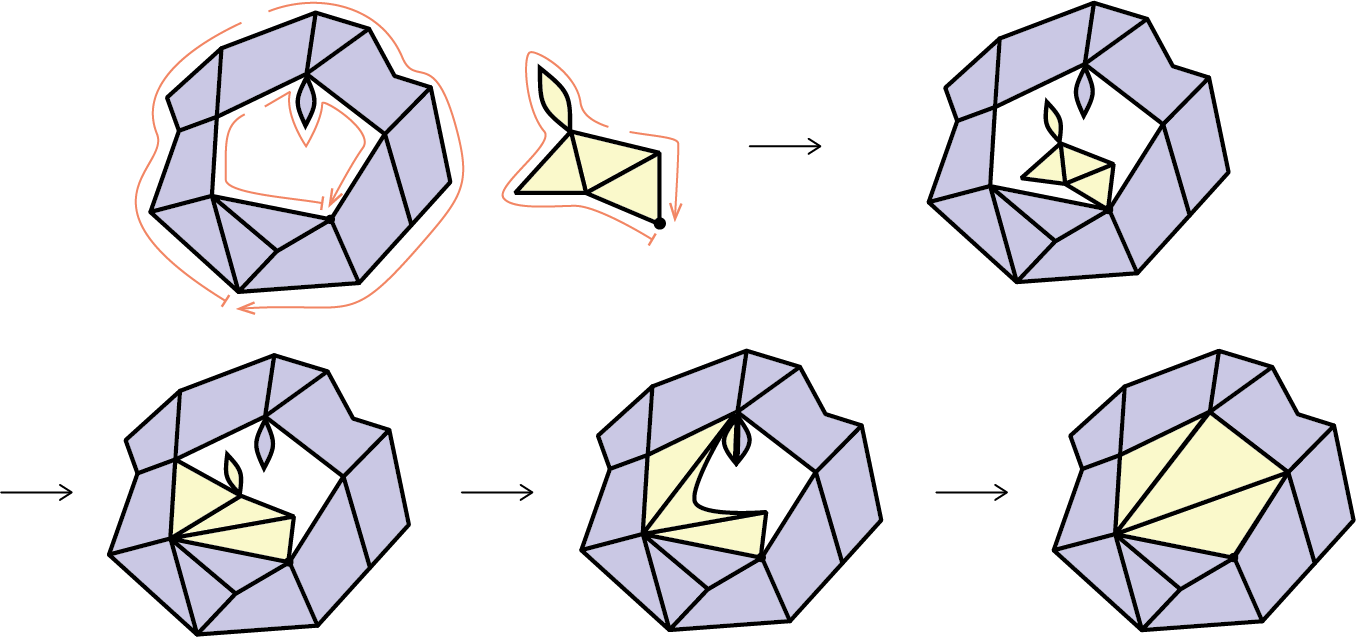}
      \put(59,148){\small{$u$}}  
      \put(60,124){\small{$v$}}  
      \put(147,121){\small{$v$}}  
 \put(100,143){\small{$\Omega$}}  
\put(155,130){\small{$\Delta$}}  
\put(318,60){\small{$\Omega \sqcup_v \Delta$}}  
\put(188,121){\small{I}}  
\put(7,38){\small{II}}  
\put(116,38){\small{III}}  
\put(230,38){\small{IV}}  
\end{overpic}
 \caption{An example illustrating sewing a van Kampen diagram $\Delta$  to fill the hole in an annular diagram $\Omega$.  In step I the  diagram $\Delta$ is inserted into the `hole' and attached at one vertex.  In step II two pairs of edges are folded together, and then two more pairs in step III. In step IV instead of folding the next pair, two 2-cells are discarded, and then a final two pairs are folded together.}
  \label{fig:island}
\end{figure}

\begin{remark} \label{cut and replace remark}
We can combine operations (i) and (ii) to cut a subdiagram out of a van~Kampen diagram $\Delta$ for a word $u$ and glue in another van~Kampen diagram in its place to produce a new van~Kampen diagram for $u$.  Operations (i) and (ii) can also be performed as follows on an annular diagram $\Omega$ over a presentation $\<X\mid R \>$.  Let $u$ and $v$ be the words labelling the outer and inner boundary circuits, respectively, of $\Omega$.  Assume $u$ and $v$ do not represent the identity in $\<X\mid R \>$.  Suppose  $\rho$ is a non-crossing edge-loop that  is inessential (i.e., is contractible in $\Omega$). View $\Omega$ as a van~Kampen diagram over  $\<X\mid R\cup \{v\}\>$ by adding an additional face $f_v$
 filling the inner cycle of $\Omega$.  Then cutting out the subdiagram bounded by $\rho$ and sewing in another van~Kampen diagram in its place will produce a new van~Kampen diagram for $u$ over $\<X\mid R\cup \{v\}\>$.  As we saw, this sewing operation can remove faces, however the face $f_v$ must survive because otherwise we would arrive at a van~Kampen diagram for $u$ over $\<X\mid R \>$, which would mean that $u=1$ in $\<X\mid R \>$ contrary to hypothesis.  So 
  removing the interior of $f_v$ gives  an annular diagram for  $u$ and $v$ over   $\<X\mid R \>$.
\end{remark}

The following corollary sets out how  we can simplify the task of  estimating the  conjugator length functions of HNN-extensions. 
We will apply part \ref{item:3} to Stallings' group in our proof of Theorem~\ref{Stallings CL}.  This result is also called on in \cite{GiRi}.

\begin{cor} \label{cor:HNN simplify} 
Let $H$ be a group with finite presentation $\langle X \mid R \rangle$ and let $\phi: A \to B$ be an isomorphism between finitely generated subgroups of $H$.  This  defines an HNN-extension with stable letter $t$:
\begin{equation} \label{eq:pres hnn}
G \  = \  H \ast_\phi   \ = \  \left\langle X \cup \set{ t }  \, \left| \,  R \cup \set{ t^{-1} \alpha_i t =\beta_i  }_{i=1, \ldots, k} \right. \right\rangle
\end{equation}
 where $\alpha_1, \ldots, \alpha_k$ are words on $X$ that generate $A$ and $\beta_1, \ldots, \beta_k$ are words on $X$ that represent $\phi(\alpha_1), \ldots, \phi(\alpha_k)$, respectively.

	\begin{enumerate}[label=(\roman*)]
\item \label{item:1} If  a word $w$ represents the identity in $G$, then it admits a van~Kampen diagram over \eqref{eq:pres hnn} without $t$-annuli.  

\item \label{item:2} If $u$ and $v$ are words representing non-identity conjugate elements of $G$, then they admit an annular diagram $\Omega$ with no inessential $t$-annuli.  Further, $\Omega$  may have radial $t$-corridors or essential $t$-annuli, but cannot have both. Further, there exist words $u'$ and $v'$  that satisfy
	\begin{enumerate}[label=(\alph*)]
		\item \label{e:a} $u \sim  u'$ and $v \sim v'$ in $G$,
		\item  \label{e:b} $\CL_G(u,v) \leq \CL_G(u',v') + |u| + |v|$  
	\end{enumerate}		
and there is an annular diagram  $\Omega'$ for $u'\sim v'$ with the additional property that no $t$-corridor connects a pair of edges that are both in the $u'$-boundary cycle or both in the $v'$-boundary cycle. 
	\end{enumerate}		
Suppose, moreover, that $\alpha_i \equiv \beta_i$ (as words) for all $i$,  in which case $A=B$,  the stable letter $t$ commutes with elements of $A$,  and $G   =  H \dot{\ast}_{\phi}$. Then:  
	\begin{enumerate}[label=(\roman*)] \addtocounter{enumi}{2}
\item \label{item:3}  Further to \ref{item:2},  one can choose $u'$ and $v'$ so that  $|u'|\leq |u|$ and $|v'| \leq |v|$ and the  annular diagram $\Omega'$  has no essential annuli.  So there are no $t$-annuli in $\Omega'$ and all the $t$-corridors are radial.  If  $u'$  contains no  letter $t^{\pm 1}$, then neither does $v'$, 
and $u' \sim v'$ in $H$ with $\CL_G(u',v') = \CL_H(u',v')$.
	\end{enumerate}		
\end{cor}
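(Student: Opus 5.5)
The whole argument is a sequence of surgeries on diagrams, carried out with the cutting and sewing operations (i), (ii) and Remark~\ref{cut and replace remark}. For \ref{item:1}, start from any van~Kampen diagram for $w$ over \eqref{eq:pres hnn} and remove $t$-annuli one at a time, innermost first. Let $\mathcal{A}$ be an innermost $t$-annulus. Its outer boundary cycle is a non-crossing loop labelled by a word in the $\beta_i$ (or the $\alpha_i$). Its inner boundary cycle carries the corresponding word in the $\alpha_i$ (or the $\beta_i$). Because $\mathcal{A}$ is innermost, the subdiagram inside the inner cycle contains no $t$-edges, so the inner word equals $1$ in $H$. Since $\phi$ is an isomorphism, the outer word then also equals $1$ in $H$, so it has a van~Kampen diagram over $\langle X\mid R\rangle$. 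We cut out the disc bounded by the outer cycle of $\mathcal{A}$ and sew that diagram in. This strictly decreases the number of $t$-edges, so the process terminates with a diagram that has no $t$-annuli.

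For \ref{item:2}, apply the same surgery to inessential $t$-annuli in an annular diagram $\Omega$, taken from Lemma~\ref{annular diagrams lemma}. Since the outer cycle of an inessential annulus is contractible in $\Omega$, Remark~\ref{cut and replace remark} guarantees that the face $f_v$ survives the sewing, so we still have an annular diagram for $u,v$. Radial corridors and essential annuli cannot coexist, because an essential annulus separates the two boundary cycles and corridors cannot cross annuli.

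To obtain $u',v'$, handle corridors with both ends on the same boundary cycle. Choose an outermost such corridor on the $u$-side. It cuts off a subdiagram whose boundary is $t^{\pm1}$ times a subword $\sigma$ of a cyclic permutation of $u$, times $t^{\mp1}$, times a side of the corridor labelled by a word $\gamma$ in the $\alpha_i$ or $\beta_i$. Replace $\sigma t^{\pm1}\cdots t^{\mp1}$ in $u$ by the corresponding corridor side; that is, remove the region from the diagram and declare the new outer label to be $u'$. Each such step conjugates $u$ by a cyclic permutation, so \ref{e:a} holds. The paths from the old basepoint to the new one run along the outer boundary, so the cumulative cost is at most $|u|$; doing the same on the $v$-side gives \ref{e:b}.

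The main obstacle is keeping this bookkeeping honest when several pinches are performed in succession, since the new basepoints keep moving. I would measure all basepoint displacements in the final diagram along the original boundary arcs. Those arcs stay within the union of the removed regions and the remaining boundary, so the total displacement is at most $|u|+|v|$ even though $u'$ may be long.

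For \ref{item:3}, assume $\alpha_i\equiv\beta_i$. Then the two sides of every corridor carry identical labels. A pinched corridor can therefore be excised together with its subdiagram, replacing $t\sigma t^{-1}$ by the side label $\gamma$, and $\gamma=\sigma$ in $G$. Rather than inserting $\gamma$, I would delete the two $t$-letters and glue the two sides of the corridor together. The resulting $u'$ is $u$ with two letters deleted, which gives $|u'|\le|u|$.

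Essential annuli are treated the same way: an essential annulus has identical labels on its two boundary cycles, so we collapse it by identifying those cycles. This keeps the annular topology, and since no $t$-letters remain on the boundary, all $t$-corridors that survive are radial. If $u'$ contains no $t$, then there are no radial corridors, hence no $t$-edges at all. In that case $v'$ has no $t$ and $\Omega'$ is a diagram over $\langle X\mid R\rangle$, so $u'\sim v'$ in $H$. Lemma~\ref{lem:annular diagram path} applied in $H$ gives $\CL_H(u',v')\le\CL_G(u',v')$. The reverse inequality $\CL_G\le\CL_H$ is trivial. For the inequality $\CL_H\le\CL_G$, first take a minimal $G$-diagram realising $\CL_G(u',v')$ and apply the removal of annuli and pinches described above, so that it is $t$-free without lengthening the connecting path.
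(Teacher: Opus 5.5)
Parts (i) and (ii), and the gluing and collapsing steps in (iii), are sound. Your handling of pinches differs from the paper's.

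\begin{itemize}
\item \emph{Your route.} You excise, inside $\Omega$ itself, the regions cut off by corridors with both ends on one boundary cycle.
\item \emph{The paper's route.} It removes pinches from the words $u$ and $v$ directly. Because these moves nest or are disjoint, a single cyclic permutation of $u$ suffices, so $\CL_G(u,u')\le|u|$. It then notes that no diagram for $u'\sim v'$ without inessential annuli can contain a pinched corridor, since $u'$ and $v'$ admit no further pinch.
\end{itemize}

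Your diagrammatic version also works, with one condition. Take the final basepoint of $u'$ at a vertex of the original boundary that is not interior to any removed arc, for instance an endpoint of the far side of a removed corridor. Then the original boundary arc from $\star_u$ to that vertex is a single conjugator of length at most $|u|$. This is what your ``measure along the original arcs'' remark has to mean; summing step-by-step displacements would not give the bound.

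The gap is in the last assertion of (iii), $\CL_H(u',v')\le\CL_G(u',v')$. You take an annular diagram realising $\CL_G(u',v')$ and claim that removing its annuli makes it $t$-free ``without lengthening the connecting path.''

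\begin{itemize}
\item Collapsing an essential annulus does send the connecting path to one of no greater length.
\item For inessential annuli, however, the procedure you gave in (ii) cuts out the entire disc bounded by the annulus and sews in an arbitrary van~Kampen diagram over $\langle X\mid R\rangle$. A shortest connecting path may pass through that disc. Nothing guarantees the new filling contains a route of comparable length between the path's entry and exit points, and detouring along the boundary of the excised disc can cost arbitrarily more.
\end{itemize}

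So the length control is unjustified as stated. You could repair it by collapsing inessential annuli as well. That is legitimate here, since $\alpha_i\equiv\beta_i$ gives the two boundary cycles of every $t$-annulus identical labels.

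The paper's argument is simpler and needs no diagrams. Because $\alpha_i\equiv\beta_i$, killing $t$ defines a retraction $r\colon G\to H$. This retraction fixes the $t$-free words $u'$ and $v'$ and does not increase word length. Hence if $u'w=wv'$ in $G$, then $u'\,r(w)=r(w)\,v'$ in $H$ with $|r(w)|\le|w|$.
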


\begin{proof}
For   \ref{item:1}, suppose $\Delta$ is a van Kampen diagram for such a $w$ over \eqref{eq:pres hnn}.  Suppose $\Delta$ has a $t$-annulus. The  word  $w_0$ around the outer boundary of an \emph{innermost} $t$-annulus in $\Delta$ represents $1$ in $H$, and so admits a van~Kampen diagram over  $\langle X \mid R \rangle$.  Per Remark~\ref{cut and replace remark} we can cut out this $t$-annulus and the subdiagram it encircles and replace it by a  subdiagram over  $P$, with the result being a van~Kampen diagram for $w$ with fewer $t$-annuli. The result follows by induction. 

For words $u$ and $v$ representing non-identity conjugate elements of $G$, Lemma~\ref{annular diagrams lemma} tells us that   there is an annular diagram $\Omega$ portraying $u\sim v$.  Arguing as above, per Remark~\ref{cut and replace remark} we can remove the inessential $t$-annuli in $\Omega$,  thereby   establishing the first part of  \ref{item:2}.  There cannot be  both a  radial $t$-corridor and an essential $t$-annulus in  $\Omega$ because they would have to cross, which cannot happen.

Towards the remaining part of \ref{item:2}, we appeal to the following process of successively eliminating `pinches' from  a word $w$  in $(A\cup\{t\})^{\pm 1}$. 
Let $w_0$ be $w$. For $i \geq 0$,  we recursively obtain $w_{i+1}$ from $w_i$ as follows: Let $\hat{w}_i$  be a cyclic permutation $\lambda_i t^{-\varepsilon_i} \mu_i t^{\varepsilon_i} \xi_i$ of $w_i$ (with $\varepsilon_i = \pm 1$) such that  $\mu_i$ is a word in $(A\cup\{t\})^{\pm 1}$ that represents an  element of $A$ if $\varepsilon_i = 1$  of $A$ and of $B$ if $\varepsilon_i = -1$;  let $w_{i+1} := \lambda_i  \overline{\mu_i}   \xi_i$,  where $\overline{\mu_i}$ denotes a word in $X^{\pm 1}$ representing  $\phi^{\varepsilon_i}(\mu_i)$. By construction,  $w_{i+1}= \hat{w}_i$ in $G$.  We continue until a word $w' = w_m$ is obtained whose cyclic permutations contain no pinches.

The successive moves in this process either \emph{nest} or are \emph{disjoint}, which is to say that $\overline{\mu_i}$ is either a subword of $\mu_{i+1}$ or has no overlap with it.  Indeed, the process can be depicted by an annular diagram of the form displayed in Figure~\ref{fig:arches} with each step giving rise to one $t$-corridor.
  It follows that if we replace $w$ by an appropriate cyclic permutation $\tilde{w}$, then we can execute this same process of removal of pinches without any subsequent cyclic permutations --- that is, $w_i = \hat{w}_i$ for all $i$.  Then, since only one 
  cyclic permutation was made,  $\CL_G(w,w')\le |w|$.

  \begin{figure}[ht]
\begin{overpic}
{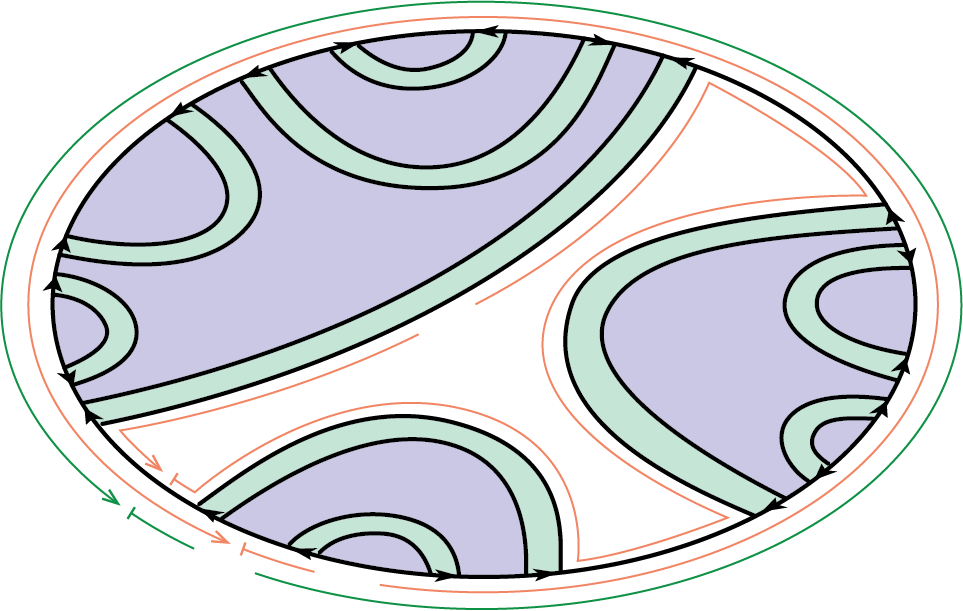}
\put(49,7){\small{$\tilde{w}$}}  
\put(104,66){\small{$w'$}}  
\put(80.5,5){\small{$w$}}  
\end{overpic}
 \caption{An illustration of $t$-corridors arising from a process of eliminating pinches in a word $w$.}
  \label{fig:arches}
\end{figure}

Now, for $u$ and $v$ of the lemma, define $u'$ and $v'$ to be words obtained in this manner.  Then $u' \sim u \sim v \sim v'$ and  conditions~\ref{e:a}  and \ref{e:b} are met, where for \ref{e:b} we are appealing to a form of triangle inequality enjoyed by conjugator length:
$$
\CL_G(u,v)\le \CL_G(u,u') +  \CL_G(u',v') +  \CL_G(v',v) \le |u| +  \CL_G(u',v')  + |v'|.
$$
As before,  we may assume there is  an annular diagram $\Omega'$ with no inessential $t$-annuli.  Such an $\Omega'$ can have no $t$-corridor that connects a pair of $t$-labelled edges that are both in the outer-boundary cycle (labelled $u'$) or both in the inner-boundary cycle  (labelled $v'$) --- else, a further pinch-removal would be possible.    
 
 For  \ref{item:3}, in this special case where $\phi$ is the identity map $A \to A$, the words $\overline{\mu_i}$  of the pinch-removal process are $\mu_i$, and so  $|u'|\leq |u|$ and $|v'| \leq |v|$.  
 And suppose there is an essential $t$-annulus in $\Omega'$.  The word read around its outer cycle $\rho$ is the same as that read around its inner cycle $\rho'$.  In this case, we can simplify the diagram by excising the corridor and folding the
 inner cycle of the deleted annulus to the outer one (per Remark~\ref{cut and replace remark}).  We continue until there are no essential $t$-annuli.

Finally,  if there are no letters $t^{\pm 1}$ in $u'$, then there are none in $v'$ since the only $t$-corridors in $\Omega'$ are radial.   Killing $t$ defines a retraction of free groups $r:F(A\cup\{t\})\to F(A)$ that induces a retraction of $G$ onto $H$.  
The words $u'$ and $v'$ are fixed by $r$, and if $u'w=wv'$ in $G$ then $r(w)$, which has length at most $|w|$, conjugates
$u'$ to $v'$ in $H$.  Thus $u'\sim v'$ in $H$ and $\CL_G(u',v')= \CL_H(u',v')$. 
\end{proof}

\section{Estimates of   conjugator-length functions --- a survey}  \label{survey}

With apologies for omissions, we present here a survey of known bounds  on conjugator length functions, exploring some  examples in more detail.  Where appropriate, statements of growth rates  should be understood to be up to $\preceq$, $\succeq$, or $\simeq$  as defined in Section~\ref{why}.

\subsection{Free, hyperbolic, and relatively hyperbolic groups} \label{sec:hyp groups}

Recall that, whereas we write $\CL(u, v)$ for the length of the shortest word $w$ such that $uw=wv$ in a finitely generated group $G$,  Antol\'in and Sale define  the permutation  conjugator length $\PCL(u, v)$  to be the  length of the shortest $w$ such that $u'w=wv'$ in $G$ for some cyclic permutations $u'$  of $u$ and $v'$ of $v$ (in the terminology of Section~\ref{sec: Notation and terminology}).

\begin{lemma} \label{lem:free groups}
 If  $G = F(A)$ is a free group on a finite generating set $A$, then its conjugator length function satisfies $\CL(n) \leq n$ for all $n$.  Indeed, words $u$ and $v$ satisfy $u \sim v$ in $G$  if and only if their cyclically reduced forms are cyclic permutations of one another.  And, in that event,   $\PCL(u, v) =0$.  
 \end{lemma}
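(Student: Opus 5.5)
The proof rests on the standard normal-form description of conjugacy in a free group, followed by a length count. First I will reduce each word to a cyclically reduced one. A freely reduced word $u$ can be written uniquely as $u \equiv x\, \hat u\, x^{-1}$ (equality in $F(A)$ after free reduction), where $\hat u$ is cyclically reduced. Then $u = x \hat u x^{-1}$, so $\hat u = x^{-1} u x$ and $x$ conjugates $u$ to $\hat u$. Moreover $2|x| + |\hat u| \le |u|$. Likewise $v = y \hat v y^{-1}$ with $2|y| + |\hat v| \le |v|$.

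Next comes the characterization. If $\hat u$ and $\hat v$ are cyclic permutations of one another, say $\hat u \equiv pq$ and $\hat v \equiv qp$, then $p^{-1}\hat u p = \hat v$ and they are conjugate. For the converse I will argue with annular diagrams over the presentation $\langle A \mid \emptyset\rangle$, which has no relators. By Lemma~\ref{annular diagrams lemma}, if $\hat u \sim \hat v$ are nontrivial, there is an annular diagram with no 2-cells, that is, a planar graph homotopy equivalent to a circle. Such a graph is a single embedded cycle with trees attached. The outer and inner boundary labels are each read by traversing that cycle together with the hanging trees. Since $\hat u$ and $\hat v$ are cyclically reduced, there is no backtracking, so no trees hang off (a tree would produce a cancelling pair $aa^{-1}$ cyclically in the boundary word). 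Therefore both words read the same cycle from possibly different basepoints, and are cyclic permutations of each other. The trivial case, where $u$ and $v$ both represent $1$, has $\hat u$ and $\hat v$ empty, and the statement holds there too.

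The $\PCL$ claim follows at once. Cyclic permutations of $u$ and $v$ are freely equal to cyclic permutations of $\hat u$ and $\hat v$ up to conjugacy. More directly: when $u$ and $v$ are cyclically reduced (implicitly the setting of PCL for reduced inputs), some cyclic permutation of $u$ equals one of $v$ as words, so the empty conjugator works. I expect the main subtlety to be exactly this: for non-cyclically-reduced inputs, the cyclic permutations need to be interpreted after free reduction, and I would state that convention explicitly.

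Finally, the bound. The element $w = x p y^{-1}$ satisfies $w^{-1} u w = y p^{-1} \hat u p y^{-1} = y \hat v y^{-1} = v$, and $|w| \le |x| + |p| + |y|$. Choosing the decomposition so that $|p| \le |\hat u|$, we get $|x| + |p| + |y| \le |x| + |\hat u| + |y| \le |u| + |v|/2 \le n$. Hence $\CL(n) \le n$.
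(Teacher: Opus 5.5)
Your bound $\CL(n)\le n$ and your characterization of conjugacy are fine. The paper states this lemma without proof, as a standard fact. The conjugator $w=xpy^{-1}$ does satisfy $w^{-1}uw=v$ with $|w|\le |u|+|v|/2\le n$. Your converse via Lemma~\ref{annular diagrams lemma} over $\langle A\mid\emptyset\rangle$ is also legitimate: an annular diagram with no $2$-cells is a single cycle with trees attached, and any tree would create a cyclically adjacent pair $aa^{-1}$ in $\hat u$ or $\hat v$. So both boundary words read the same cycle, and $\hat u$, $\hat v$ are cyclic permutations of each other.

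The gap is the $\PCL$ claim. The lemma asserts $\PCL(u,v)=0$ for \emph{arbitrary} conjugate words. Here $\PCL$ (Section~\ref{sec:hyp groups}) is defined using cyclic permutations of the words $u$ and $v$ themselves and equality in $G$, and the paper does apply it to non-reduced words (e.g.\ part~\ref{log} of Theorem~\ref{thm: CL for hyperbolic groups}). You only handle cyclically reduced inputs and propose changing the convention otherwise. That fallback does not even cover freely reduced words such as $u=bab^{-1}$, $v=a$. More to the point, no change is needed, because the claim is true as stated.

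Your sentence saying cyclic permutations of $u$ are freely equal to cyclic permutations of $\hat u$ ``up to conjugacy'' is too weak to give an empty conjugator. It also points in the wrong direction: $bab^{-1}$ itself is freely equal to no cyclic permutation of $a$. What you need is an existence statement: \emph{for every cyclic permutation $c$ of $\hat u$, some cyclic permutation of $u$ is freely equal to $c$.}

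For freely reduced $u\equiv x\,pq\,x^{-1}$ with $\hat u\equiv pq$, the cyclic permutation $q\,x^{-1}x\,p$ of $u$ does this. In general, iterate cyclic cancellation. Suppose some cyclic permutation of $u$ has the form $\alpha\, aa^{-1}\beta$. Then each cyclic permutation of $\alpha\beta$ is freely equal to the cyclic permutation of $\alpha\, aa^{-1}\beta$ obtained by cutting at the corresponding place, never between $a$ and $a^{-1}$. Repeating this reaches a cyclically reduced word, which by your characterization is a cyclic permutation of $\hat u$; this proves the existence statement.

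Finally, take a cyclic permutation $c$ common to $\hat u$ and $\hat v$, and choose cyclic permutations $u',v'$ of $u,v$ that are both freely equal to $c$. Then $u'=v'$ in $G$, so the empty word conjugates $u'$ to $v'$ and $\PCL(u,v)=0$.
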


Recall that  for  $\delta \geq 0$, a group with  finite generating set $A$ is \emph{$\delta$-hyperbolic} if every geodesic triangle in the  Cayley graph $\textup{Cay}(G,A)$ is \emph{$\delta$-slim}, which is to say that each side is
contained  in the $\delta$-neighbourhood of the union of the other two sides. For example,  the free group $F(A)$ is $0$-hyperbolic.  A path in a space is a  \emph{$k$-local geodesic} if every subpath of length at most $k$ is a geodesic.

The standing assumption for  the following four lemmas is that $\textup{Cay}(G,A)$ is $\delta$-hyperbolic.
Taken together, the first two  encapsulate much of the essence of hyperbolicity.

\begin{lemma}[via H.1.7 and H.1.13 of Chapter~III.$\Gamma$ \cite{BrH}]  \label{local close to geodesics}
Let $k=8 \delta +1$. There is a constant $R>0$ such that if $\rho$ is  $k$-local geodesic from $p$ to $q$ in $\textup{Cay}(G,A)$ and $[p,q]$ is a geodesic from $p$ to $q$, then $\rho$ lies  in the $R$-neighbourhood of $[p,q]$.   
\end{lemma}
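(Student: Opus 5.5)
The plan follows the standard route of \cite[III.H.1.13]{BrH}: first show that a $k$-local geodesic with $k=8\delta+1$ is a quasi-geodesic with constants depending only on $\delta$, and then apply stability of quasi-geodesics (the Morse lemma, \cite[III.H.1.7]{BrH}) to get the $R$-neighbourhood bound. Since edge-paths in $\textup{Cay}(G,A)$ are parametrised by arc length, I work with $\rho$ as a map from an interval to the Cayley graph and measure lengths along it.

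The first step is a quantitative local-to-global statement: a $k$-local geodesic $\rho$ with $k > 8\delta$ satisfies $d(\rho(s),\rho(s')) \geq \lambda |s-s'| - \epsilon$ with explicit $\lambda,\epsilon$. Following \cite{BrH}, the target is $\lambda = (k+4\delta)/(k-4\delta)$ for the reversed inequality $|s-s'| \le \lambda\, d + \epsilon$, with $\epsilon = 2\delta$. To prove it, I pick a point $x$ of $\rho$ at maximal distance from the geodesic $[p,q]$ and consider a subpath of $\rho$ of length about $k$ centred near $x$, which is geodesic by hypothesis. Using $\delta$-slimness of the triangles formed by this subpath, the geodesic $[p,q]$, and the segments joining them to their endpoints, one shows that $\rho$ cannot stray further than about $2\delta$ from $[p,q]$. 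Comparing arc length with distance then gives the quasi-geodesic inequality by subdividing $\rho$ into pieces of length roughly $k/2$. The margin $k-8\delta>0$ is exactly what makes the slim-triangle estimate close up, and one needs to be careful that each chosen subpath genuinely has length at most $k$.

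The second step takes the resulting $(\lambda,\epsilon)$-quasi-geodesic $\rho$ and invokes the stability theorem \cite[III.H.1.7]{BrH}. This gives $R=R(\delta,\lambda,\epsilon)$ such that $\rho$ and $[p,q]$ are at Hausdorff distance at most $R$; in particular, $\rho$ lies in the $R$-neighbourhood of $[p,q]$. Since $\lambda$ and $\epsilon$ depend only on $\delta$, so does $R$.

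I expect the main obstacle to be the first step, the local-to-global argument. The usual proof of stability requires tame (continuous) quasi-geodesics, but an edge-path is already continuous, so the only real work is the slim-triangle bookkeeping. If that proves messy, the fallback is to cite \cite[III.H.1.13]{BrH} directly, which states precisely that $k$-local geodesics with $k>8\delta$ are $(\lambda,\epsilon)$-quasi-geodesics, and then apply H.1.7 as above.
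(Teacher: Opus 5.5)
Your proposal is correct and takes the same route the paper indicates with its citation. You use H.1.13 to show that a $k$-local geodesic with $k>8\delta$ is a $(\lambda,\epsilon)$-quasi-geodesic with constants depending only on $\delta$, and then the stability theorem H.1.7 to produce $R$. Two small remarks: the ``maximal distance'' slim-triangle argument in your first step (H.1.13(1)) already gives the conclusion directly with $R=2\delta$, so the appeal to stability is optional; and your first displayed inequality should have $1/\lambda$ in place of $\lambda$.
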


\begin{lemma}[Proposition~1.6, page~400 of \cite{BrH}]  \label{geodesic close to path}
There exists $K>0$ such that, for all $L>1$,  if $\rho$ is a continuous path of length $L$ from $p$ to $q$ in $\textup{Cay}(G,A)$ and $[p,q]$ is a geodesic from $p$ to $q$, then $[p,q]$ lies in the $K \log L$-neighbourhood of $\rho$.   
\end{lemma}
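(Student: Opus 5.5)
We prove Lemma~\ref{geodesic close to path}: there is $K>0$ such that for all $L>1$, a geodesic $[p,q]$ lies in the $K\log L$-neighbourhood of any continuous path $\rho$ of length $L$ from $p$ to $q$. The strategy is the standard bisection argument, using only the $\delta$-slimness of geodesic triangles in $\textup{Cay}(G,A)$.

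First reduce to a convenient parametrisation. Parametrise $\rho:[0,1]\to \textup{Cay}(G,A)$ proportionally to arc length, and fix $x\in[p,q]$. Set $d:=d(x,\rho)$; we want $d\le K\log L+K'$. Since $L>1$ is bounded away from $1$ only weakly, an additive constant can be absorbed later. One way is to treat $L\le 2$ separately: then $d\le L/2+\ldots$ trivially by the triangle inequality. Alternatively, write the bound as $\delta\log_2 L+C$ and enlarge $K$ for $L\ge 2$.

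Next comes the key inductive step, by dyadic subdivision. Let $N$ be the least integer with $L/2^N\le 1$, so $N\le \log_2 L+1$. Consider the geodesic triangle with vertices $\rho(0),\rho(1/2),\rho(1)$, where $[p,q]$ is one side. By $\delta$-slimness, $x$ lies within $\delta$ of a point $x_1$ on one of the geodesics $[\rho(0),\rho(1/2)]$ or $[\rho(1/2),\rho(1)]$. Repeat with that geodesic and the midpoint of the corresponding half of $\rho$. After $N$ steps we obtain a point $x_N$ on a geodesic joining $\rho(t_0)$ and $\rho(t_0+2^{-N})$, with $d(x,x_N)\le N\delta$. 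That geodesic has length at most the length of the subpath, namely $L/2^N\le 1$. So $x_N$ is within $1/2$ of an endpoint, which lies on $\rho$. Hence
\[
d(x,\rho)\le \delta N+\tfrac12\le \delta(\log_2 L+1)+\tfrac12 .
\]

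Finally, convert this into the stated form $K\log L$. This is the only fiddly point, since the additive constant must be absorbed when $L$ is close to $1$, where $\log L\to 0$. For $L\ge 2$, choosing $K=\delta/\log 2+(\delta+1)/\log 2$ suffices, because $\log L\ge\log 2$. For $1<L<2$, the bound must come directly. The path has length less than $2$, so $d(p,q)<2$. Moreover $[p,q]$ lies in the Cayley graph, where the geodesic between the vertices $p,q$ is an edge path. The honest fix is to note that the paper's convention in \cite{BrH} states the bound as $\delta|\log_2 L|+1$. I would therefore either adopt that form, or observe that for vertex endpoints in the intended application one has $L\ge 2$ or $p=q$ or $L=1$, which covers the degenerate regime. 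This boundary bookkeeping is the only real obstacle; the geometric content is the bisection.
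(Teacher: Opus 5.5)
Your dyadic bisection argument is exactly the proof of Proposition~III.H.1.6 in \cite{BrH}, which is all the paper invokes. It correctly yields $d(x,\rho)\le \delta(\log_2 L+1)+\tfrac12$, and hence the stated $K\log L$ form for $L\ge 2$.

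Your worry about $1<L<2$ points to a defect in the paper's paraphrase, not a gap in your argument. The source bound is $\delta\abs{\log_2 L}+1$, and with no additive constant the claim really can fail near $L=1$. For example, if an involution in $A$ produces a double edge, take $[p,q]$ to be one edge and $\rho$ the parallel edge plus a tiny excursion; the midpoint of $[p,q]$ then stays at distance $\tfrac12$ from $\rho$. So reverting to that form, or to edge paths, is the right resolution. Note that it is edge paths that force $L\ge 2$ once $L>1$, because their length is an integer; vertex endpoints alone do not force this.
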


\begin{lemma}[\textup{Lemma~2.9, page~452 of \cite{BrH}.}]   \label{geodesic quadrilateral lemma}
   Suppose $u \sim v$ in $G$ for some words $u$ and  $v$ that are \emph{fully reduced}, which is to say that all of their cyclic permutations are geodesic words.  Then  either (a) $\max \set{|u|, |v|} \leq 8 \delta +1$,  or (b) there exist  cyclic permutations $u'$ and $v'$ of $u$ and $v$
and a geodesic word $w$ of length $\PCL(u,v) \leq 2\delta +1$ so that $u'w=wv'$ in $G$.
 \end{lemma}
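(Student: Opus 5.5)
We argue in the Cayley graph $X=\textup{Cay}(G,A)$, using geodesic quadrilaterals. Suppose neither $|u|$ nor $|v|$ is small, say $\max\set{|u|,|v|} > 8\delta+1$. Let $w_0$ be a shortest word with $uw_0=w_0v$. In $X$ consider the bi-infinite paths $\tilde u$ and $\tilde v$ obtained by concatenating translates of $u$ and of $v$:
\[
\tilde u=\cdots u\,u\,u\cdots \text{ through } 1, \qquad \tilde v=\cdots v\,v\,v\cdots \text{ through } w_0 .
\]
Because $u$ is fully reduced, every subpath of $\tilde u$ of length at most $|u|$ is a subword of a cyclic permutation of $u$, and hence geodesic. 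So $\tilde u$ is a $|u|$-local geodesic, and likewise $\tilde v$ is a $|v|$-local geodesic. The element $u$ acts on $X$ by left multiplication, translating $\tilde u$ along itself. Since $u=w_0vw_0^{-1}$, the same element also preserves $\tilde v$ and translates it along itself.

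The key step is to show the two lines fellow-travel closely. We apply Lemma~\ref{local close to geodesics} to long finite segments of $\tilde u$ and $\tilde v$ that cover many periods. This needs the local-geodesic parameter to be at least $8\delta+1$, which holds for whichever of $u,v$ is long. For the other word, $u$ and $v$ have the same stable translation length, and a fully reduced word of length at most $8\delta+1$ that is conjugate to one of length greater than $8\delta+1$ needs separate care. I would instead handle that case by bounding using both lengths and the form of the conclusion (a), or by replacing the short word with a power, which is still fully reduced by local-geodesic arguments. With both lines quasi-geodesic and invariant under the same infinite-order element $u$, they stay within a uniform distance of each other. The real aim is the sharp constant: we need a vertex $p$ on $\tilde u$ and a vertex $q$ on $\tilde v$, both at the start of a period, with $d(p,q)\le 2\delta+1$.

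To get that constant, take a geodesic quadrilateral with vertices $p$, $up$ (on $\tilde u$), $q$, and $uq$ (on $\tilde v$). Its sides are $[p,up]$ labelled by a cyclic permutation $u'$, $[q,uq]$ labelled by a cyclic permutation $v'$, and the two sides $[p,q]$ and $[up,uq]=u[p,q]$. The last two are translates of each other. Choose $p$ and $q$ (up to cyclic permutation) to minimise $d(p,q)$. Slimness of the quadrilateral then says each point on the long side $[p,up]$ lies within $2\delta$ of one of the other three sides. If $d(p,q)>2\delta+1$, a point near the middle of $[p,up]$, or the first period vertex, is $2\delta$-close to a point of $[q,uq]$. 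Shifting $p$ and $q$ to nearby period vertices and using that all period vertices on the lines are integer points gives a pair with smaller distance (at most $2\delta+1$), contradicting minimality. This is the argument of \cite[Lemma~III.$\Gamma$.2.9]{BrH}, and $w$ is then read along the geodesic $[p,q]$.

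The main obstacle is the bookkeeping in this last step. We must make sure that the point of $[p,up]$ close to $[q,uq]$ really exists, i.e.\ that it is not close to the two short sides $[p,q]$ or $u[p,q]$. This is exactly where the condition $|u|>8\delta+1$ enters: $[p,up]$ must be long enough compared with $d(p,q)$ that its midpoint is far from both ends. A second, smaller obstacle is converting a point close to $[q,uq]$ into a vertex at the start of a period, which costs the $+1$. For this I would allow $u'$ and $v'$ to be any cyclic permutations, so that every vertex of $\tilde u$ and $\tilde v$ counts as a period start.
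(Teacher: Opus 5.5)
Your quadrilateral is the paper's: with vertices $p,up,uq,q$ its sides read $u'$, $w$, ${v'}^{-1}$, $w^{-1}$. Minimising $d(p,q)$ over vertices of $\tilde u$ and $\tilde v$ is the paper's choice of $|w|$ minimal. The gap is in the decisive step.

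You assert that if $d(p,q)>2\delta+1$ then a vertex near the middle of $[p,up]$ is $2\delta$-close to $[q,uq]$. You attribute this to $[p,up]$ being long compared with $d(p,q)$. But $|u|>8\delta+1$ gives no such comparison, since a priori $d(p,q)$ could be much larger than $|u|$. Moreover, in a general geodesic quadrilateral a point far from the corners $p,up$ can still be $2\delta$-close to the long sides $[p,q]$ or $u[p,q]$: in a tree, $[p,up]$ may lie entirely in $[p,q]\cup u[p,q]$. Your alternative of using the first period vertex fails outright, since $p\in[p,q]$.

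What rules this out is the minimality of $d(p,q)$, which you only invoke at the end. As the paper stresses, the $w$-sides are shortest paths from vertices of the $u'$-side to vertices of the $v'$-side. Concretely, suppose a vertex $x\in[p,up]$ is within $2\delta$ of a point $y\in[p,q]$. Since $x$ is a vertex of $\tilde u$ and $q$ is a vertex of $\tilde v$,
\[
d(p,q)\ \le\ d(x,q)\ \le\ d(x,y)+d(y,q)\ \le\ 2\delta+d(p,q)-d(p,y),
\]
so $d(p,y)\le 2\delta$ and hence $d(p,x)\le 4\delta$. The same computation with $uq$ in place of $q$ shows that being $2\delta$-close to $u[p,q]$ forces $d(x,up)\le 4\delta$.

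Since $|u|-8\delta>1$, there is an integer $i$ with $4\delta<i<|u|-4\delta$. The vertex $x$ at distance $i$ from $p$ must then be $2\delta$-close to $[q,uq]$, so it lies within $2\delta+1$ of a vertex of $\tilde v$. Minimality then gives $d(p,q)\le 2\delta+1$ directly, hence $\PCL(u,v)\le 2\delta+1$. This is exactly where the constant $8\delta+1$ comes from, and no argument by contradiction is needed.

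Separately, drop the fellow-travelling paragraph: the quadrilateral argument never uses it. In particular, your claim that a power of the short word is still fully reduced by local-geodesic arguments is unfounded. A fully reduced word of length at most $8\delta+1$ yields only a short-range local geodesic, to which Lemma~\ref{local close to geodesics} does not apply, and its powers need not be geodesic. If it is $v$ rather than $u$ that is long, run the argument above on the side $[q,uq]$ instead. That is, exchange the roles of $u$ and $v$ and replace $w$ by $w^{-1}$.
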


\begin{proof}[Proof outline] 
An equality  $u'w=wv'$ in $G$, 
where $u'$ and $v'$ are cyclic permutations of $u$ and $v$, respectively,  gives rise to a geodesic quadrilateral  $\mathcal{Q}$ in $\textup{Cay}(G,A)$ with sides labeled $u'$,  $w$,  ${v'}^{-1}$, ${w}^{-1}$.  
Inserting a geodesic path  as a diagonal subdivides $\mathcal{Q}$ into two geodesic triangles, and   the  $\delta$-slim condition tells us that for any point on the $u'$-side, there is a path of length at most $2\delta$ to one of the other three sides of $\mathcal{Q}$.  If we chose $u'$ and $v'$ so that $|w|$ is minimal, then the two $w$-sides of $\mathcal{Q}$ are of minimal length among all paths in $\textup{Cay}(G,A)$ that connect  a vertex on the $u'$-side to a vertex on the $v'$-side. We leave as an exercise for the reader the metric argument (explained in detail in \cite{BrH}) that now shows that either (a) and (b) holds.      
\end{proof}

If, instead of requiring  $u$ and $v$ be fully reduced, we only insist that they be geodesic words, we get:

\begin{lemma}[Section 3 of \cite{AS}]
  \label{geodesic hexagon lemma part}   
   Suppose $u$ and  $v$ are geodesic words such that $u \sim v$ in $G$.   Let $u'=u_2u_1$ and $v'=v_2v_1$ be cyclic permutations of $u=u_1 u_2$ and $v=v_1 v_2$ and let 
$w$ be a geodesic word  of length $\PCL(u,v)$ such that $u'w=wv'$ in $G$. Then for all $4 \delta < i < |w|-4 \delta$, 
the length-$i$ prefix $w_i$ of $w$ satisfies $d(1, w_i^{-1} u' w_i)   \leq 8 \delta$. 
 \end{lemma}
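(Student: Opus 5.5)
We work in the Cayley graph and read the equality $u'w=wv'$ as a geodesic quadrilateral $\mathcal{Q}$. Its vertices are $1$, $u'$, $u'w = wv'$ and $w$. The two sides labelled $w$ run from $1$ to $w$ and from $u'$ to $u'w$. The other two sides are labelled $u'$ (from $1$ to $u'$) and $v'$ (from $w$ to $wv'$). The quantity $d(1,w_i^{-1}u'w_i) = d(w_i, u'w_i)$ is the distance between the two points at parameter $i$ on the two $w$-sides. So the claim is that two "parallel" points on the $w$-sides are $8\delta$-close once $i$ is more than $4\delta$ from both ends.

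The first step is to exploit minimality of $w$. Take $x=w_i$ on the first $w$-side and draw the diagonal from $1$ to $u'w$. This splits $\mathcal{Q}$ into two geodesic triangles. By $\delta$-slimness applied twice, $x$ lies within $2\delta$ of some point $y$ on one of the other three sides. We then rule out the cases where $y$ lies on the $u'$-side or the $v'$-side, when $4\delta<i<|w|-4\delta$.

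Suppose $y$ lies on the $u'$-side. A point of the $u'$-side is the basepoint of a cyclic permutation $u''$ of $u'$, which is a cyclic permutation of $u$. The path from $y$ to $x$ followed by the tail of $w$ from $x$ gives a word $w''$ with $u''w''=w''v'$. This holds because $y$ is also conjugated along the $u'$-path; concretely, $w'' = z^{-1}\cdot(\text{path }y\to x\to w)$ is a conjugator from the cyclic permutation $u''$ to $v'$. Its length is at most $2\delta + |w|-i < |w|$ when $i>2\delta$. This contradicts the choice of $w$ as realising $\PCL(u,v)$. The $v'$-side is symmetric and uses $i<|w|-2\delta$. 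One must check that cyclic permutations of cyclic permutations remain cyclic permutations of the original words. One must also check that the conjugator is correctly oriented; this bookkeeping is routine.

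Hence $y$ lies on the second $w$-side, say $y = u'w_j$, and $d(w_i, u'w_j)\le 2\delta$. It remains to show $|i-j|$ is small, so that moving from $u'w_j$ to $u'w_i$ costs little. For this, use that both $w$-sides are geodesics and compare distances to the endpoints. We have $d(u', y)=j$ and $d(1,x)=i$, with $d(x,y)\le 2\delta$. Going from $1$ along the $u'$ side cannot shortcut, by the minimality argument above applied at the endpoints. This yields $|i-j|\le 4\delta$ after a triangle-inequality estimate: replacing the initial segment of either $w$ by the other route through the $2\delta$-bridge must not produce a shorter conjugator of cyclic permutations. Therefore $d(w_i,u'w_i)\le 2\delta + |i-j| \le 8\delta$ after absorbing constants, which is why the window is $4\delta<i<|w|-4\delta$.

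The main obstacle is this last step: bounding $|i-j|$ so that the total comes out at exactly $8\delta$ and not some larger constant. The geodesicity of $u$ and $v$ (rather than full reducedness) enters in controlling the corners of $\mathcal{Q}$. I would handle it by applying the minimality swap argument to the subpaths near the corners. That argument shows that any discrepancy $|i-j|>4\delta$ lets one build a shorter conjugator between cyclic permutations.
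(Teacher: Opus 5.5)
Your overall architecture matches the paper's: find a point $y$ near $x=w_i$ on another side, rule out the $u$- and $v$-sides by minimality of $\PCL$, conclude that $y$ lies on the other $w$-side, then bound $|i-j|$ by splicing.

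\textbf{The gap.} The first step fails, because $\mathcal{Q}$ is not a geodesic quadrilateral. Only $u=u_1u_2$ and $v=v_1v_2$ are assumed geodesic. Their cyclic permutations $u'=u_2u_1$ and $v'=v_2v_1$ need not be: in a free group $aba^{-1}$ is geodesic but its cyclic permutation $ba^{-1}a$ is not. If they were geodesic you would be in the fully reduced setting of Lemma~\ref{geodesic quadrilateral lemma}. So the diagonal from $1$ to $u'w$ does not cut $\mathcal{Q}$ into geodesic triangles, and your $2\delta$ bound is unjustified. No minimality argument about $w$ can repair this, since the defect lies in $u'$ and $v'$ themselves.

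\textbf{The fix.} Break the $u'$-side at the vertex $u_2$ and the $v'$-side at the vertex $wv_2$. This gives a geodesic hexagon with sides labelled $u_2,u_1,w,v_1^{-1},v_2^{-1},w^{-1}$, since subwords of geodesic words are geodesic. Three diagonals cut it into four $\delta$-slim triangles, so $x$ is within $4\delta$ of a vertex $y$ on one of the other five sides; let $\sigma$ label a geodesic from $y$ to $x$. Your exclusion argument then works on each of the four sides $u_1,u_2,v_1,v_2$. On a $u$-side the conjugator is $\sigma\hat{w}_i$, of length at most $|w|-i+4\delta$. On a $v$-side it is $w_i\sigma^{-1}$, of length at most $i+4\delta$. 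Both conjugate cyclic permutations of $u$ and $v$, which is exactly why the hypothesis reads $4\delta<i<|w|-4\delta$.

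\textbf{The $|i-j|$ step.} The step you flag as the main obstacle needs nothing about corners, but the splice should be made explicit. If $y=u'w_j$ with $j\ge i$, then $w_i\sigma^{-1}\hat{w}_j$ equals $u'w$ in $G$, which again conjugates $u'$ to $v'$. Minimality of $|w|$ gives $i+|\sigma|+(|w|-j)\ge |w|$, i.e.\ $j-i\le |\sigma|\le 4\delta$. If $j<i$, use $w_j\sigma\hat{w}_i=(u')^{-1}w$ instead. Hence $d(w_i,u'w_i)\le d(x,y)+|i-j|\le 4\delta+4\delta=8\delta$ exactly.

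As written, your constants do not fit together. With a $2\delta$ bridge the same splice would give $|i-j|\le 2\delta$, not $4\delta$. Also, ``absorbing constants'' is not available when the statement fixes $4\delta$ and $8\delta$. With the hexagon these constants come out exactly.
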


\begin{proof} 
The equality $u_2u_1 w = w v_2v_1$ in $G$ between six geodesic words gives rise to a geodesic hexagon $\mathcal{H}$ in $\textup{Cay}(G,A)$, as shown left in Figures~\ref{fig:hexagons} and \ref{fig:more_hexagons}.  	Suppose  $4 \delta < i < |w|-4 \delta$.      Write $w = w_i \hat{w}_i$ where $w_i$ is the length-$i$ prefix of $w$ and $\hat{w}_i$ is the remainder of $w$.   Let $p$ be the vertex of $\mathcal{H}$ at the end of one copy of $w_i$, as shown in both figures. 

On subdividing    	  $\mathcal{H}$ into four $\delta$-slim geodesics by inserting three diagonals, we find that $\delta$-hyperbolicity implies that each side of $\mathcal{H}$ is in the $4 \delta$-neighbourhood of the other five sides. So there is a point $y$, indeed a vertex $y$, on one of the other five sides such that $d(x,y) \leq 4 \delta$ in $\textup{Cay}(G,A)$.  Let $\sigma$ be the word read along some geodesic in $\textup{Cay}(G,A)$ from $y$ to $x$.

  \begin{figure}[ht]
\begin{overpic}
{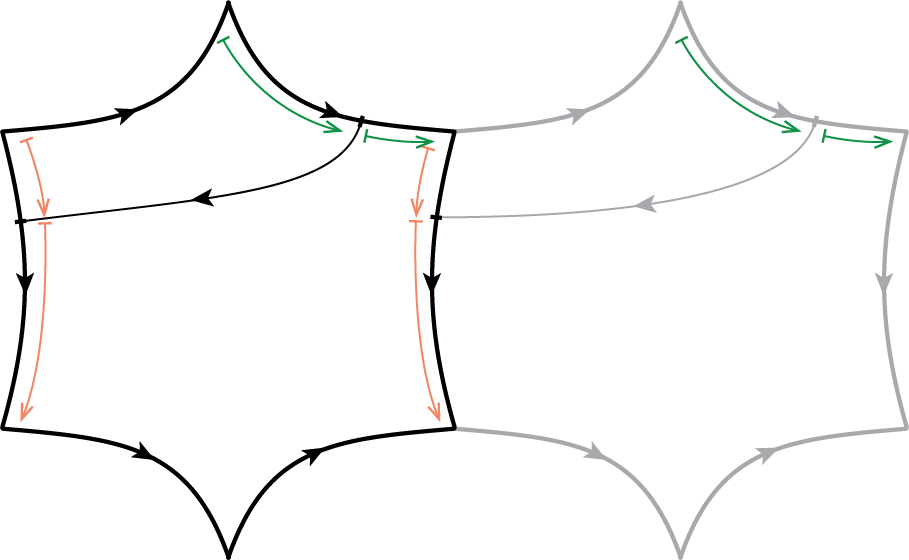}
\put(11,91){\small{$w_i$}}  
\put(11,54){\small{$\hat{w}_i$}}  
\put(88,85){\small{$w_i$}}  
\put(90,54){\small{$\hat{w}_i$}}  
\put(-4,65){\small{$w$}}  
\put(107,65){\small{$w$}}  
\put(217,65){\small{$w$}}  
\put(46,90){\small{$\sigma$}}  
\put(152,89){\small{$\sigma$}}  
\put(-4,80){\small{$p$}}  
\put(88,110){\small{$q$}}  
\put(76,112){\small{$u_1$}}  
\put(24,112){\small{$u_2$}}  
\put(77,19){\small{$v_1$}}  
\put(27,19){\small{$v_2$}}  
\put(55,104){\small{$u'_1$}}  
\put(88,93){\small{$u''_1$}}  
\put(185,112){\small{$u_1$}}  
\put(135,112){\small{$u_2$}}  
\put(135,19){\small{$v_2$}}  
\put(182,18){\small{$v_1$}}  
\put(165,104){\small{$u'_1$}}  
\put(198,92){\small{$u''_1$}}  
\put(-14,109){\small{$\mathcal{H}$}}  
\end{overpic}
 \caption{Geodesic hexagons as arising in the proof of Lemma~\ref{geodesic hexagon lemma part} in the case where $q$ is on the side of $\mathcal{H}$ labelled by $u_1$.}
  \label{fig:hexagons}
\end{figure}

Suppose $q$ is on the side of $\mathcal{H}$ labelled by $u_1$, which is the situation depicted in the figure. 
Express $u_1$ as the concatenation $u'_1 u''_2$ of subwords, with the transition happening at $q$.    	  
	  
  The figures depict a second hexagon, the translation of $\mathcal{H}$ that shares one $w$-side with $\mathcal{H}$ as shown.  Tracing paths through Figure~\ref{fig:hexagons} starting from $q$, we see that $w' = \sigma \hat{w}_i$ satisfies  $$u''_1 u_2 u'_1 w' = w' v_2 v_1$$ in $G$.  Now, $u''_1 u_2 u'_1$ and $v_2v_1$ are cyclic permutations of $u$ and $v$, respectively, so $|w| = \PCL(u,v)$  forces  $|w'| \geq |w|$,  but that is impossible given that $|w'| = |\sigma| + | \hat{w}_i |$,  $|w| =	|w_i| + | \hat{w}_i |$, $|\sigma| \leq 4 \delta$, and $|w_i| = i > 4 \delta$.    
	  
So $q$ is not on the side of $\mathcal{H}$ labelled by $u_1$.  Similarly, $q$  cannot be on the sides labelled by $u_2$, $v_1$, or $v_3$.  

So in fact $q$ is on the second side of  $\mathcal{H}$ that is labelled by $w$, as shown in Figure~\ref{fig:more_hexagons}.  Express $w$ as the concatenation $w_j \hat{w}_j$ of subwords, with the transition happening at $q$.  Let $r$ be the vertex at the end of $w_i$ on that same side as $q$.  

Suppose $i \leq j$, which is the situation depicted in the figure.  So $d(q,r) = j-i$ because $w$ is a geodesic word.

  \begin{figure}[ht]
\begin{overpic}
{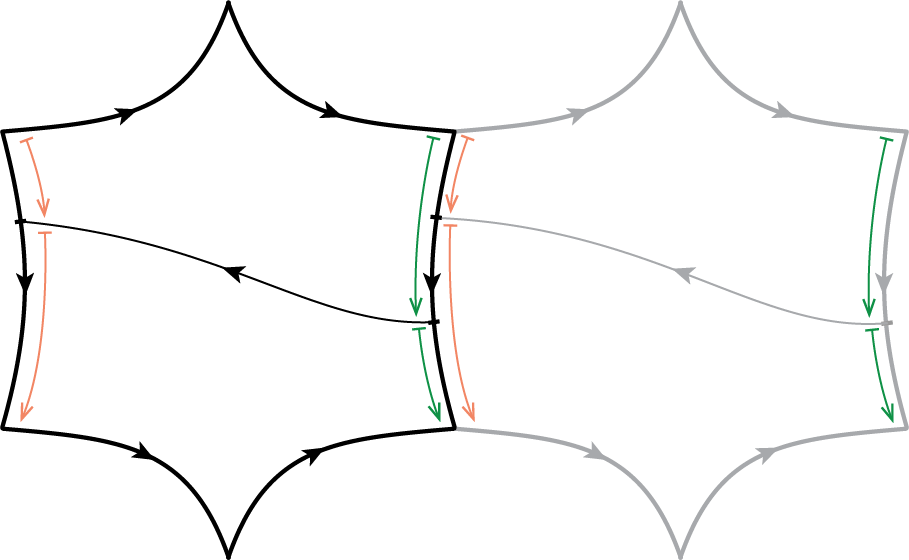}
\put(11,91){\small{$w_i$}}  
\put(11,54){\small{$\hat{w}_i$}}  
\put(88,70){\small{$w_j$}}  
\put(90,42){\small{$\hat{w}_j$}}  
\put(112,89){\small{$w_i$}}  
\put(112,44){\small{$\hat{w}_i$}}  
\put(198,80){\small{$w_j$}}  
\put(200,42){\small{$\hat{w}_j$}}  
\put(-4,65){\small{$w$}}  
\put(217,65){\small{$w$}}  
\put(55,73){\small{$\sigma$}}  
\put(163,73){\small{$\sigma$}}  
\put(-4,80){\small{$p$}}  
\put(110,56){\small{$q$}}  
\put(98,81){\small{$r$}}  
\put(76,112){\small{$u_1$}}  
\put(24,112){\small{$u_2$}}  
\put(77,19){\small{$v_1$}}  
\put(27,19){\small{$v_2$}}  
\put(185,112){\small{$u_1$}}  
\put(135,112){\small{$u_2$}}  
\put(135,19){\small{$v_2$}}  
\put(182,18){\small{$v_1$}}  
\put(-14,109){\small{$\mathcal{H}$}}  
\put(-5,101){\small{$\ast$}}  
\end{overpic}
 \caption{Geodesic hexagons as arising in the proof of Lemma~\ref{geodesic hexagon lemma part} in the case where $p$ and $q$ are on the two sides of $\mathcal{H}$ labelled by $w$.}
  \label{fig:more_hexagons}
\end{figure}		  
	  
Let $w' = w_i \sigma^{-1}  \hat{w}_j$. Tracing paths through Figure~\ref{fig:more_hexagons} starting from the vertex labelled $\ast$ reveals that $u' w' = w' v'$ in $G$.  	     
Given that $|\sigma| \leq 4 \delta$, it must be the case that $d(q,r) \leq 4 \delta$, because otherwise $w'$ would be a shorter conjugator than $w$.  So $$d(1, w_i^{-1} u' w_i) = d(p,r) \leq d(p,q) + d(q,r)    \leq 4 \delta + 4 \delta  =  8 \delta,$$ as required.  

If, instead, $j <i$, then $w' = w_j \sigma \hat{w}_i$ would satisfy $u'w' = w' v'$ in $G$, and the same argument applies.    	  
\end{proof}

We are now ready to present the analog of Lemma~\ref{lem:free groups} for hyperbolic groups. 

\begin{thm}[\cite{Lysenok},  \cite{BrH} pp.\ 451--454, \cite{AS}] \label{thm: CL for hyperbolic groups}  For all $\delta \geq 0$, there exists  $C>0$ such that if $G$ is a $\delta$-hyperbolic group, then:

\begin{enumerate}[label=(\roman*)]
\item  \label{hyp linear bound}	 The  conjugator length function of $G$ satisfies $\CL(n) \leq C n$ for all $n$.  
\end{enumerate}

For words  $u$ and $v$ of total length $n$ such that $u \sim v$ in $G$:

\begin{enumerate} [label=(\roman*)]
\addtocounter{enumi}{1}
\item   \label{BrH hypothesis} If all cyclic permutations of $u$ and $v$ are  $(8 \delta +1)$-local geodesics, then $\PCL(u, v) \leq  C$. 

\item  \label{AS hypothesis}   If  $u$ and $v$ are  geodesic words, then $\PCL(u, v) \leq  C$.

\item \label{log} In general,  $\PCL(u, v) \leq  C \log n$.

\end{enumerate}
\end{thm}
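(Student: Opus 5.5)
I will prove the four parts in the order \ref{AS hypothesis}, \ref{BrH hypothesis}, \ref{log}, \ref{hyp linear bound}, using the lemmas above. The common strategy is to replace $u$ and $v$ by words of a better-controlled shape, invoke Lemma~\ref{geodesic quadrilateral lemma} or Lemma~\ref{geodesic hexagon lemma part}, and then use a counting argument to bound the permutation conjugator.

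For \ref{AS hypothesis}, let $u,v$ be geodesic with $u\sim v$. Choose cyclic permutations $u',v'$ and a geodesic $w$ with $|w|=\PCL(u,v)$ and $u'w=wv'$. By Lemma~\ref{geodesic hexagon lemma part}, every prefix $w_i$ with $4\delta<i<|w|-4\delta$ satisfies $|w_i^{-1}u'w_i|\le 8\delta$. The ball of radius $8\delta$ contains at most $N=(2|A|)^{8\delta+1}$ elements. Hence, if $|w|>8\delta+2+N$, there are indices $i<j$ in that range with $w_i^{-1}u'w_i=w_j^{-1}u'w_j$. Then $w_iw_j^{-1}$ commutes with $u'$, so deleting the subword of $w$ between positions $i$ and $j$ gives a shorter conjugator from $u'$ to $v'$. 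This contradicts minimality, so $\PCL(u,v)\le 8\delta+2+N=:C$.

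For \ref{BrH hypothesis}, apply the same argument to $k$-local geodesic words, using Lemma~\ref{local close to geodesics}. Every cyclic permutation of $u$ stays within $R$ of a geodesic, and local geodesics are quasi-geodesics, so the hexagon argument in the proof of Lemma~\ref{geodesic hexagon lemma part} goes through with $4\delta$ replaced by a constant depending only on $\delta$ (and $R$). Alternatively, one can cite the argument of \cite{BrH} behind Lemma~\ref{geodesic quadrilateral lemma}, in which cyclically $k$-local geodesic words play the role of fully reduced ones. Case (a) of that lemma, where both words are short, is handled by the finite-ball counting bound.

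For \ref{log}, an arbitrary word $u$ of length at most $n$ can be converted into a cyclically $(8\delta+1)$-local geodesic word by repeatedly replacing a non-geodesic subword of length at most $k$ in some cyclic permutation by a shorter geodesic. Each step shortens the word, and the process changes the conjugate by at most cyclic permutations. The main obstacle is controlling the conjugator this produces, which is the standard path realising the conjugacy of $u$ to the reduced word $\bar u$. I would bound it by taking a geodesic representative and a cyclic permutation adapted to it. By Lemma~\ref{geodesic close to path}, the geodesic from $1$ to $u$ lies within $K\log n$ of the path labelled $u$. Choosing a cyclic permutation of $u$ starting at a vertex within $K\log n$ of the geodesic axis therefore gives a conjugator of length $O(\log n)$ to a word of controlled geometry. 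Combining this with \ref{AS hypothesis} on both sides, and using the triangle inequality for conjugator length, gives $\PCL(u,v)\le C\log n$. This step is delicate because the geodesic representative of $u$ need not be cyclically geodesic. I would handle that by cyclically conjugating the geodesic representative until it is fully reduced, which costs only a length bounded by half the difference of lengths, and then track the resulting constants.

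Finally, \ref{hyp linear bound} follows quickly. A cyclic permutation of $u$ is obtained from $u$ by conjugation by a prefix of length at most $|u|$, and likewise for $v$. So $\CL(u,v)\le |u|+|v|+\PCL(u,v)\le n+C\log n\le C'n$.
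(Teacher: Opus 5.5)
Your argument for (iii) is the paper's. Deriving (i) from (iv) is also what the paper does. Your sketch of (ii) is at the paper's level of detail; adapting the prefix-counting argument to local geodesics, instead of invoking Lemma~\ref{geodesic quadrilateral lemma}, works equally well.

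The gap is in (iv), at the step you flag as delicate. Your remedy is to conjugate the geodesic representative $\bar{u}$ to a fully reduced word, at a cost of up to half the difference in lengths. That destroys the logarithmic bound, because the cost can be linear in $n$. For example, in a free group the geodesic word $\bar{u}=a^m b a^{-m}$ needs the conjugator $a^m$, of length about $n/2$, to reach $b$, even though $\PCL(\bar{u},b)=0$. With that accounting you only get $\PCL(u,v)\preceq n$.

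The step is also unnecessary. Part (iii), which you have already proved, needs $\bar{u}$ and $\bar{v}$ only to be geodesic, not cyclically geodesic; that is the whole point of using (iii) rather than (ii) here. For the same reason, drop the opening rewriting of $u$ into a cyclically $(8\delta+1)$-local geodesic word. The conjugator it produces is a product of prefixes of successive intermediate words, not a single prefix of $u$. The natural estimate on it is the linear one the paper uses to derive (i) from (ii).

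There is also an ordering problem. $\PCL$ satisfies no naive triangle inequality, because the cyclic permutations of the intermediate word used on the two sides must match. So the choices must be made in this order:
\begin{enumerate}
\item Apply (iii) to geodesic words $\bar{u},\bar{v}$ representing the same elements as $u,v$. This gives $\bar{u}=\bar{u}_1\bar{u}_2$, $\bar{v}=\bar{v}_1\bar{v}_2$ and $\bar{w}$ with $|\bar{w}|\le C$ and $\bar{u}_2\bar{u}_1\bar{w}=\bar{w}\bar{v}_2\bar{v}_1$ in $G$.
\item Apply Lemma~\ref{geodesic close to path} to the path labelled $u$ and the geodesic labelled $\bar{u}$, which share the endpoints $1$ and $u$. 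The vertex $\bar{u}_1$ lies within $K\log n+1$ of a vertex $u_1$, where $u=u_1u_2$. Then $\sigma:=u_1^{-1}\bar{u}_1$ satisfies $u_2u_1\sigma=\sigma\bar{u}_2\bar{u}_1$ in $G$.
\item Construct $\tau$ in the same way for $v$. Then $\sigma\bar{w}\tau^{-1}$ conjugates $u_2u_1$ to $v_2v_1$, so $\PCL(u,v)\le 2K\log n+C+2$.
\end{enumerate}
This is the paper's proof.

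Finally, (i) does not need (iv) at all. Since $\CL$ depends only on the elements represented, (iii) gives $\CL(u,v)=\CL(\bar{u},\bar{v})\le|\bar{u}|+|\bar{v}|+C\le n+C$.
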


\begin{figure}[ht]
\begin{overpic}
{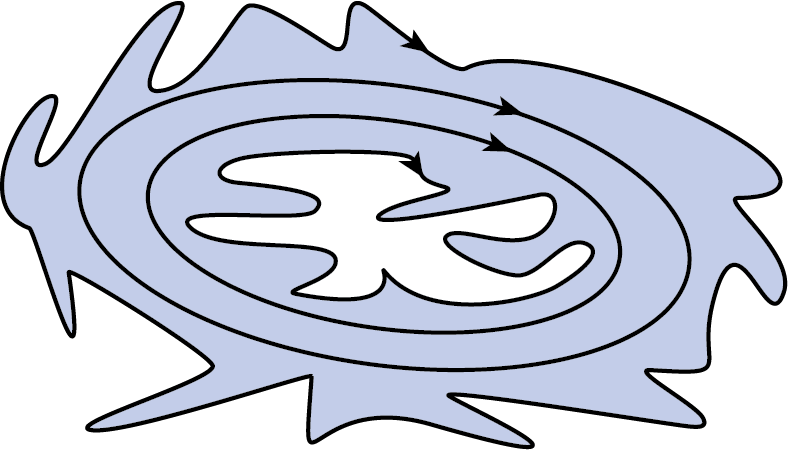}
 \put(102,99){\small{$u$}}     
 \put(93,63){\small{$v$}}     
 \put(123,84){\small{$\widetilde{u}$}}     
 \put(117,63){\small{$\widetilde{v}$}}   
\end{overpic}
 \caption{An annular diagram in a hyperbolic group}
  \label{fig:hyperbolic_annulus}
\end{figure}

\begin{proof} 
Conclusions \ref{BrH hypothesis}--\ref{log} strengthen \ref{hyp linear bound}.  Indeed, \ref{hyp linear bound} is an immediate consequence of  \ref{log}. It is worth further noting that  deducing \ref{hyp linear bound} from \ref{BrH hypothesis} in the following manner leads to an efficient algorithmic solution to the conjugacy problem.  We can assume we have a finite presentation for $G$ that includes all words of length at most $16\delta +1$ that represent $1$ in $G$  among the defining relators.   Convert $u$ (respectively, $v$)  to a word $\widetilde{u}$ (respectively, $\widetilde{v}$) that represents a conjugate of $u$ (respectively, $v$) in $G$ so that $\widetilde{u}$  and $\widetilde{v}$  and their cyclic permutations are $(8 \delta +1)$-local geodesics; this can be done by a combination of cyclic permutation and at most $n$ applications of defining relators, each instance of the latter replacing a non-geodesic subword $\sigma$ of length at most $(8 \delta +1)$ by a geodesic word that equals $\sigma$ in $G$. So the pairs $u, \widetilde{u}$ and $v, \widetilde{v}$ admit annular diagrams (see Figure~\ref{fig:hyperbolic_annulus}) with no more than $n$ faces in total, and therefore no more than $n + n(16\delta +1)$ edges in total. So it follows from Lemma~\ref{lem:annular diagram path} that for  $C = 16\delta +2$, we have $\CL(u, \widetilde{u}) + \CL(v, \widetilde{v}) \leq Cn$ for all $n$.        The existence of $C>0$ such that  $\CL(n) \leq C n$ for all $n$ then follows because  $\CL(u, v) \leq  \CL(u, \widetilde{u}) +  \CL( \widetilde{u}, \widetilde{v}) + \CL( \widetilde{v}, v)$.

 	Here is a proof of  \ref{BrH hypothesis}.  Assume first that all cyclic permutations of $u$ and of $v$ are geodesic words.
 	  So Lemma~\ref{geodesic quadrilateral lemma}  applies. Under its case \textup{(i)},   $\CL( u, v)$ is at most a constant since it can only apply to some finite set of pairs $u$ and $v$.  And under its case \textup{(ii)}, we have  $\PCL( u, v) \leq  2 \delta +1$.  
 	 	Lemma~\ref{local close to geodesics} allows us to get the same result, with appropriately adjusted  the constants, when we only require that all cyclic permutations of $u$ and of $v$ be  $(8 \delta +1)$-local geodesics.   
 	
 	For  \ref{AS hypothesis}, we have that $u' w = w v'$ in $G$ for some $w$ such that $\PCL( u, v) = |w|$ and some cyclic permutations $u'$ and $v'$ of $u$ and $v$, respectively.   
 	 	Suppose the length-$i$ and length-$j$ prefixes $w_i$ and $w_j$ of $w$ satisfy $w_i^{-1} u' w_i = w_j^{-1} u' w_j$ in $G$ for some  $i < j$.  Then $w_i w_j^{-1} w$ would freely reduce to a word $w_0$ that is strictly shorter than $w$ and satisfies $u' w_0 = w_0 v'$ in $G$, which would contradict $\PCL( u, v) = |w|$.   Lemma~\ref{geodesic hexagon lemma part} tells us that if $4 \delta < i < j < |w|-4 \delta$, then    $d(1, w_i^{-1} u' w_i)$ and $d(1, w_j^{-1} u' w_j)$ are at most $8 \delta$ for all $i$ and $j$.  So the  set of $i$ such that $4 \delta < i  < |w|-4 \delta$ has size at most the number $C'$ of group elements in the ball of radius $8 \delta$ about $1\in G$. So if we take  $C = C' + 8 \delta$, then $|w | \leq C$, giving  \ref{AS hypothesis}.

For \ref{log}, we apply  \ref{AS hypothesis} to  geodesic words  $\bar{u}$ and $\bar{v}$ that represent the same elements of $G$ as $u$ and $v$, respectively.  We get that $\PCL(\bar{u}, \bar{v}) \leq  C$ and we then use Lemma~\ref{local close to geodesics} to deduce $\PCL(u, v) \leq  C \log n$  after a few adjustment of the constant $C$.
 	\end{proof}

For groups $G$ hyperbolic relative to parabolic subgroups $H_\omega$, for $\omega \in \Omega$, Antol\'{i}n  \& Sale \cite{AS} prove that $\CL_G(n) \simeq \max\{\CL_{H_\omega}(n)  : \omega\in\Omega\}+n$.  This  extends prior results of Ji, Ogle and Ramsey \cite{JOR} and complements Bumagin's study of the complexity of the conjugacy problem \cite{Bumagin1,Bumagin2}.

Information is available about the constants appearing in conjugator-length estimates of hyperbolic groups.  Bridson and Howie \cite[Proposition 2.3]{BH} proved that the conjugator length function of a group that is $\delta$-hyperbolic with respect to a $k$-element generating set satisfies  $\CL(n) \leq   n + (2k+1)^{4 \delta}+4 \delta$.  And recently, Wang and  Zhang \cite{WZ} showed that    surface groups $\pi_1(\Sigma_g) = \langle c_1, \ldots, c_{2g} \mid c_1 \cdots c_{2g} c_1^{-1} \cdots c^{-1}_{2g}  \rangle$ for which the genus $g$ is at least $2$  have  $\CL(2n) = \CL(2n + 1) \leq  n + 8g - 1$.

\subsection{Non-positively curved groups} \label{NPC section}

The general solution to the conjugacy problems in biautomatic, $\CAT(0)$, or more generally semihyperbolic groups \cite{AlonsoBridson, BrH, Epstein, GS} leads to an exponential upper bound on conjugator length as follows.  

For $k \geq 0$, a \emph{$k$-synchronous bicombing} for a group $G$ with finite generating set $A$ is a set of words $\set{w_g}_{g \in G}$ on $A^{\pm 1}$ such that $w_g$ represents $g$ in $G$  and the  $k$-synchronous fellow-traveler property (f.t.p.) holds: if we regard a word $w_g$ as a unit-speed path in the Cayley graph $ \textup{Cay}(G,A)$
starting at the identity (so $w_g(t)$ is the group element represented by the length-$t$ prefix of $w_g$), then   
$$d(w_g(t), a w_{a^{-1} g a'}(t)) \  \leq \ k$$  for all $t \in \N$, all $a, a' \in A^{\pm 1} \cup \set{1}$, and all $g \in G$.   This is illustrated in Figure~\ref{fig:bicombing}.

\begin{figure}[ht]
\begin{overpic}
{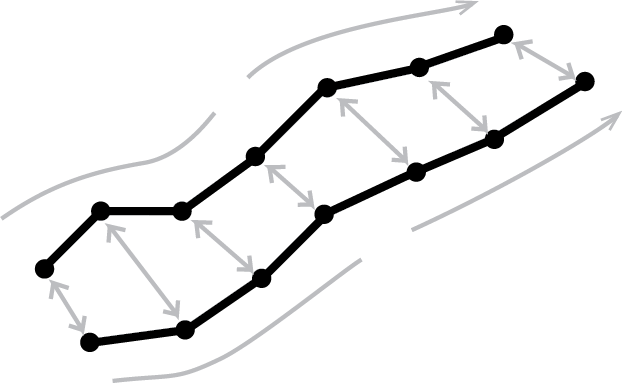}
 \put(13,5){\small{$1$}}     
 \put(2,25){\small{$a$}}     
 \put(145,73){\small{$g$}}     
 \put(124,87){\small{$ga'$}}     
\put(89,32){\small{$w_g$}}     
\put(36,69){\small{$aw_{a^{-1}ga'}$}}     
     \put(0,15){\tiny{$\leq k$}}     
 \put(22,23){\tiny{$\leq k$}}     
 \put(41,29){\tiny{$\leq k$}}     
 \put(58,43){\tiny{$\leq k$}}     
 \put(77,57){\tiny{$\leq k$}}     
 \put(99,62){\tiny{$\leq k$}}     
 \put(120,73){\tiny{$\leq k$}}     
\end{overpic}
 \caption{The synchronous bi-combing condition}
  \label{fig:bicombing}
\end{figure}

This property is an essential feature of \emph{non-positive curvature} in groups.  It is enjoyed (for some $k \geq 0$) by hyperbolic groups:  $w_g$ can be taken to be any geodesic representative of $g$,  and the fellow-traveler property (f.t.p.)  follows from the thinness of  geodesic quadrilaterals in  $ \textup{Cay}(G,A)$.
The f.t.p.\ is also enjoyed by  $\CAT(0)$-groups, i.e.~any group that acts properly and cocompactly by isometries on a $\CAT(0)$-space $X$.  In this case, the words $w_g$ are obtained by approximating geodesics in $X$, and the f.t.p.\ is a coarse reflection of the convexity of the metric in $X$; see \cite[Lemma 1.10, page 445]{BH}.   In more detail,   one fixes a basepoint $x\in X$ and maps $G$ to $X$ by the quasi-isometry $g\mapsto g.x$, then extends this to a $G$-equivariant map  $ \textup{Cay}(G,A)\to X$ that sends edges to geodesics; one then defines $w_g$ to be the label on a shortest edge-path in $ \textup{Cay}(G,A)$ such that  the unique geodesic $[x,g.x]$ stays in the $C$-neighbourhood of the edge-path, where $C$ is a constant that depends on the constants of the quasi-isometric embedding. 

The f.t.p.\ holds for synchronously biautomatic groups \cite{Epstein} and for seimhyperbolic groups \cite{AlonsoBridson} by definition.

\begin{thm}[Cf.\ Chapter III.$\Gamma$ of \cite{BH}] \label{thm: CL for bicombable groups}  If  a group $G$  group  with finite generating set $A$  admits a $k$-synchronous bicombing $\set{w_g}_{g \in G}$, then it is finitely presentable and its conjugator length function satisfies $\CL(n) \preceq 2^n$.
	
\end{thm}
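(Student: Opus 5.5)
The plan is to prove the two assertions separately, following the standard argument in Chapter III.$\Gamma$ of \cite{BH}.

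\emph{Finite presentability.} Let $R$ be the set of words of length at most $2k+4$ that represent $1$ in $G$. I will show that every null-homotopic word $w$ is a consequence of $R$, by induction on $|w|$. Write $w = a_1\cdots a_n$ and let $g_i$ be the element represented by the prefix of length $i$. Consider the combing paths $w_{g_i}$ and $w_{g_{i+1}}$. Taking $a=1$ and $a'=a_{i+1}$ in the fellow-traveller property gives $d(w_{g_i}(t), w_{g_{i+1}}(t)) \le k$ for all $t$. So the loop $w_{g_i}\, a_{i+1}\, w_{g_{i+1}}^{-1}$ can be subdivided by rungs of length at most $k$ into cells of perimeter at most $2k+2$. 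It therefore bounds a van~Kampen diagram over $R$. Gluing these $n$ ladders around the basepoint, where $w_{g_0}=w_{g_n}=w_1$, gives a van~Kampen diagram for $w$. By Lemma~\ref{vKs lemma}, $G=\langle A\mid R\rangle$.

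\emph{Conjugator length.} Suppose $uw=wv$ in $G$ with $|u|+|v|\le n$. I will use $w_h$, the combing word of the conjugator $h$, and move $u$ along it. For each $t$, let $\gamma_t = w_h(t)^{-1}\, u\, w_{uh}(t)$. Then $\gamma_0 = u$, and for large $t$ we have $\gamma_t = h^{-1}uh = v$. The key estimate comes from applying the fellow-traveller property repeatedly. Write $u=a_1\cdots a_p$ and $v=b_1\cdots b_q$, and step through the chain of elements $h, a_p^{-1}h,\dots$ and then $h b_1,\dots$. This gives $d(w_h(t), w_{uh}(t)) \le k|u|$ for every $t$ (the left multiplication by $u$ is absorbed one letter at a time). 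Hence each $\gamma_t$ lies in the ball of radius $kn$ about $1$. That ball contains at most $(2|A|+1)^{kn}$ elements.

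Now suppose the combing path is long. Then two times $s<t$ satisfy $\gamma_s=\gamma_t$ before the path ends. Excising the segment of the combing between $s$ and $t$ produces a shorter word that still conjugates: $w_h(s)\cdot (\text{suffix of } w_h \text{ from } t)$ conjugates $u$ to $v$, because $\gamma_s=\gamma_t$ lets us splice paths. So if $h$ is chosen to be a conjugator whose combing word is shortest, its length is at most $(2|A|+1)^{kn}$. This gives $\CL(n) \preceq 2^n$, since exponentials with different bases are identified under $\simeq$ when linear rescaling of the argument is allowed.

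The main obstacle is the bookkeeping in the splicing step. The spliced word need not itself be a combing word, so the minimization should be over all words, not over combing lines. The fix is to define $\gamma_t$ using the two parallel paths $w_h$ and $u\,w_{u^{-1}\cdots}$ directly as words, and to argue with the pair of prefixes, as in the proof of Theorem~\ref{thm: CL for hyperbolic groups}\ref{AS hypothesis}. A repeated pair of prefixes gives a shortcut $w_s\, w_t^{-1}\, w$ that still conjugates.

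Separately, the fellow-traveller constant must be applied only to paths of bounded length. This requires checking that $w_{g}$ and $w_{ug}$ are $k|u|$-close synchronously, which follows by the induction on $|u|$ sketched above.
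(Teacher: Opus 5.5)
Your finite-presentability argument is fine. It uses only the case $a=1$, the standard ladder argument for synchronously combable groups, whereas the paper reads finite presentability off the degenerate case ($v$ empty, $g=1$) of its conjugacy diagram. The conjugator-length part has the paper's overall shape, but as written it does not go through.

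First, $\gamma_t=w_h(t)^{-1}u\,w_{uh}(t)$ is the wrong quantity. For large $t$ it equals $h^{-1}u\cdot uh=v^2$, not $v$. Also, the distance $d(w_h(t),w_{uh}(t))$ between two combing lines issuing from $1$ does not control $|\gamma_t|$. The relevant quantity compares $w_h$ with its left translate $u\,w_h$, namely $\gamma_t=w_h(t)^{-1}u\,w_h(t)$. Your chain does bound this once it is set up correctly:
\begin{itemize}
\item pass through $h,\ a_1^{-1}h,\dots,u^{-1}h$;
\item then pass through $u^{-1}hb_1,\dots,u^{-1}hv=h$ (the last equality is where $uh=hv$ enters);
\item translate each fellow-traveller inequality on the left by the appropriate prefix of $u$.
\end{itemize}
This gives $|\gamma_t|\le k(|u|+|v|)$. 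The paper instead steps through $u$ and $v$ simultaneously, via $g_i=u(i)^{-1}hv(i)$, and gets $k\max\{|u|,|v|\}$.

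The more serious problem is your fix for the splicing step. Minimizing over \emph{all} conjugating words does not work, because the bound $|\gamma_t|\le kn$ has only been established for the combing word $w_h$. For an arbitrary shortest conjugator you have no control on its prefix-conjugates $w(t)^{-1}uw(t)$, so the pigeonhole has nothing to act on. The analogy with Theorem~\ref{thm: CL for hyperbolic groups}\ref{AS hypothesis} breaks down precisely here. There, Lemma~\ref{geodesic hexagon lemma part} supplies the bound for every shortest conjugator; a bicombing gives nothing comparable.

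Knowing that the shortcut ``still conjugates'' is not enough. What you need is that excision preserves the ball bound. Suppose $w$ conjugates $u$ to $v$, every $w(t)^{-1}uw(t)$ lies in the ball of radius $kn$, and these elements agree for $t=s$ and $t=s'$ with $s<s'$. Then deleting letters $s+1,\dots,s'$ of $w$ yields a word that again conjugates $u$ to $v$, and whose prefix-conjugates form a subset of those of $w$. So you should minimize within this class of words, which is exactly the class certifying the quasi-monotone conjugacy property in the paper's proof. The class is nonempty because it contains $w_h$; alternatively, iterate the excision starting from $w_h$. A minimizer has pairwise distinct prefix-conjugates, all in a ball with at most $(2|A|+1)^{kn}$ elements, so its length is less than $(2|A|+1)^{kn}$.
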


\begin{figure}[ht]
\begin{overpic}
{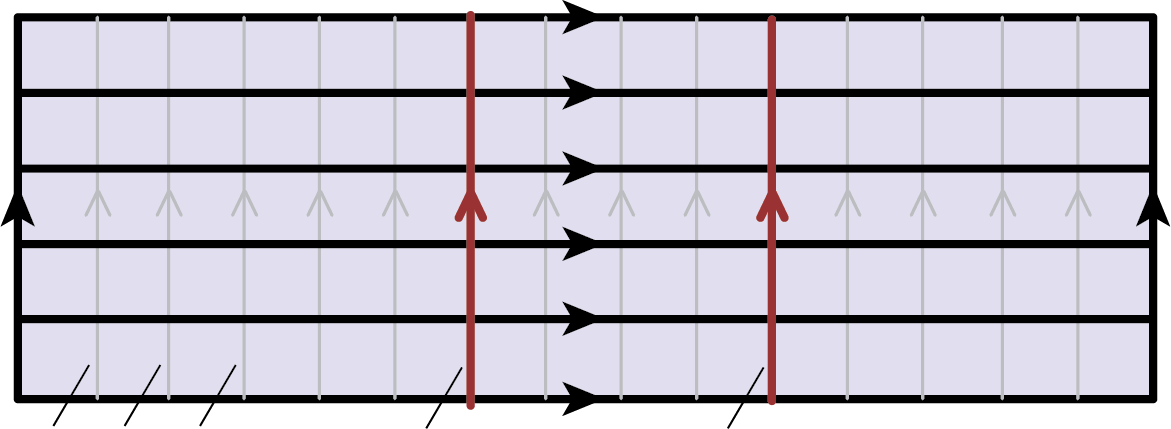}
 \put(-20,50){\small{$u$}}     
 \put(-9,16){\small{$a_1$}}     
 \put(-9,34){\small{$a_2$}}     
 \put(-9,52){\small{$a_3$}}     
 \put(-7,70){\small{$\vdots$}}     
 \put(-9,90){\small{$a_m$}}     
 \put(295,50){\small{$v$}}     
  \put(282,16){\small{$b_1$}}     
 \put(282,34){\small{$b_2$}}     
 \put(282,52){\small{$b_3$}}     
 \put(284,70){\small{$\vdots$}}     
 \put(282,90){\small{$b_m$}}  
 \put(122,-6){\small{$w_g = w_{g_0}$}}     
 \put(122,109){\small{$w_g = w_{g_m}$}}     
\put(7,-5){\small{$\rho_1$}}     
\put(25,-5){\small{$\rho_2$}}     
\put(43,-5){\small{$\rho_3$}}     
\put(98,-5){\small{$\rho_i$}}     
\put(170,-5){\small{$\rho_j$}}     

\end{overpic}
 \caption{Conjugacy diagram arising from a bicombing}
  \label{fig:bicombing_annulus}
\end{figure}

\begin{proof}  
Suppose $u$ and $v$ are words of total length $n$ that represent conjugate elements of  $G$.  Then $ug=gv$ in $G$ for some $g \in G$. So $u w_g=w_g v$ in $G$ per the perimeter of the diagram shown in  Figure~\ref{fig:bicombing_annulus}; we will
detail the construction in a manner that explains the label preserving map from its 1-skeleton to the Cayley graph $\textup{Cay}(G,A)$.      

For a word $\sigma$ and for $j \in  \N$, let  $\sigma(j)$ be the length-$j$ prefix of $\sigma$, with the understanding that $\sigma(j)=\sigma$ when $j \geq |\sigma|$.

Write $u=a_1 \cdots a_{|u|}$ and $v=b_1 \cdots b_{|v|}$ where $a_1, \ldots, a_{|u|}, b_1, \ldots, b_{|v|} \in A^{\pm 1}$ and  $a_{|u|+1}$, $a_{|u|+2}$, \ldots and $b_{|v|+1}$, $b_{|v|+2}$, \ldots are empty words. Let $m =  \max \set{|u|, |v|}$.  For $i \in \N$, let  $g_i = u(i)^{-1} w_g v(i)$.  Then  \begin{equation}  w_{g_i}b_{i+1} \  = \  a_{i+1}w_{g_{i+1}}. \label{rectangle boundary relation} \end{equation}  If we view Figure~\ref{fig:bicombing_annulus} as a stack of unit-high rectangles, then  $w_{g_0}, \ldots, w_{g_m}$ are the words read along the horizontal lines between the rectangles and \eqref{rectangle boundary relation} equates two ways of traversing the perimeter of those rectangles from  the lower-left corner to the upper-right.

  Given \eqref{rectangle boundary relation}, the synchronous bi-combing condition tells us that for all $0 \leq i  \leq m-1$ and $j  \in \N$, there is a word of length at most $k$ on $A$ that equals $w_{g_i}(j)^{-1}a_{i+1} w_{g_{i+1}}(j)$ in $G$.  So we can subdivide the rectangles, as shown schematically in Figure~\ref{fig:bicombing_annulus}, by inserting vertical paths  of length at most $k$  from the  vertex where $w_{g_i}(j)$  ends to the vertex where  $w_{g_{i+1}}(j)$ ends for all $j$. 
 Considering this sequence of vertical paths, we deduce that the \emph{quasi-monotone conjugacy property} (\emph{q.m.c.p.}) of \cite{BrH}  holds, which is to say: there exists a constant $K$ (in this case $K=k$), such that if $u \sim v$ in $G$, then there exists $w$ (in this case $w=w_g$) such that 
 for all $j$, there is a   word  $\rho_j$ of length at most $K \max \set{|u|, |v|}$ that equals $w(j)^{-1} u w(j)$ in $G$.    

 Let $R$ be the set of all words on $A^{\pm 1}$ of length at most $2 + 2k$ that represent $1$ in $G$. Since  the perimeters of the small squares that fill Figure~\ref{fig:bicombing_annulus} are all at most  $2 + 2k$, it is a van~Kampen diagram for $w_g^{-1} u w_g v^{-1}$ over the finite presentation  $\mathcal{P} = \langle A \mid R \rangle$.  Indeed, from van~Kampen's lemma (Lemma~\ref{vKs lemma}) and the following degenerate case of this construction, we learn that $\mathcal{P}$ is a finite presentation for $G$: if $u$ represents $1$, $v$ is the empty word and $g=1$, then we have a van~Kampen diagram for $u$ with respect to $\mathcal{P}$.

Finally, we deduce    $\CL(n) \preceq 2^n$ from the  q.m.c.p..  Given $u$ and $v$ such that $u \sim v$ in $G$, suppose $w$ is a minimal length word certifying the q.m.c.p.\  for the pair $u$ and $v$.   Suppose $\rho_i$ and $\rho_j$ are the same word for some $0  \leq i < j \leq |w|$.  Let $\widehat{w}$ be $w$ with its letters in positions $i+1, \ldots, j$ removed. Then $\widehat{w}$ would also certify the q.m.c.p.\ for $u$ and $v$, contradicting minimality of $|w|$. (Indeed, we could remove the portion  between them from the diagram of Figure~\ref{fig:bicombing_annulus}, and identify so as to get a diagram for $\widehat{w}^{-1} u \widehat{w} v^{-1}$.)  There are at most $(2|A| +1)^{km}$ words of length at most $km$ and $m \leq n$, so the result follows.   
\end{proof}

In light of Theorem~\ref{thm: CL for bicombable groups} we ask:

\begin{Open problem}
What are the optimal upper bounds in general on the conjugator length functions of    biautomatic  groups and of $\textup{CAT}(0)$ groups?  
\end{Open problem}

Kokarev~\cite{Kokarev} gives bounds, but they vary with the conjugacy class.

Relaxing the definition  of synchronous bicombing by only insisting it apply when $a=1$, gives the  class of  \emph{synchronously combable groups}.  There exist synchronously combable groups for which the conjugacy problem is unsolvable \cite{Bridson9}.  These examples have combings with quadratic length functions and hence admit cubic upper bounds on their Dehn functions.  Earlier, Miller \cite[Section~7]{Miller} had explained how it follows from results in \cite{BGSS, Miller1} that there are asynchronously automatic groups with unsolvable conjugacy problem.  These are of  the form $F_r \rtimes F_5$ and have cubic Dehn functions.
 Bridson's and Miller's examples show that there is no recursive upper bound on the  conjugator length functions of these two classes of groups.  

The groups in these two families are not synchronously automatic,  so do not shed light on the issue of what the optimal upper bound  is on the conjugator length functions of synchronously automatic groups.  But this question is rather premature, since  it remains unknown whether automatic groups can have unsolvable conjugacy problem \cite[Question 2.5.8]{Epstein}.

\subsection{Nilpotent groups} \label{sec: Heisenberg}

The 3-dimensional integral Heisenberg group  
$$\mathcal{H}_3(\Z)  \ = \  \set{ \  \left. \begin{pmatrix}
1  & \beta & \gamma  \\
  & 1 & \alpha \\
  & & 1
\end{pmatrix} \  \right| \ \alpha, \beta, \gamma \in \Z \   }$$
presented by
 \begin{equation} 
H \ =   \  \left\langle \; a,b,c \; \left| \; [a,b]c^{-1}, \; [a,c], \; [b,c]    \; \right.  \right\rangle\label{pres Heisenberg}
\end{equation}
 is a class-2 nilpotent group that serves as a fundamental example for the study of conjugator length.   
Every element has  a unique representative of the form $a^{\alpha} b^{\beta} c^{\gamma}$ with $\alpha, \beta, \gamma \in \Z$.  
 
\begin{thm} \label{Heisenberg group thm}
The conjugator length function of the 3-dimensional integral Heisenberg group grows quadratically.  
\end{thm}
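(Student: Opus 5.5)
We must establish both $\CL(n) \preceq n^2$ and $\CL(n) \succeq n^2$. The approach is to work with normal forms $a^{\alpha}b^{\beta}c^{\gamma}$ and reduce conjugacy to a linear Diophantine problem.

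\textbf{Reduction to a Diophantine problem.} In $H$ we have $ba = abc^{-1}$, so $b^{y}a^{x} = a^{x}b^{y}c^{-xy}$. Writing $u = a^{\alpha}b^{\beta}c^{\gamma}$ and $w = a^{x}b^{y}c^{z}$, the conjugate satisfies
$$w^{-1}uw \;=\; a^{\alpha}b^{\beta}c^{\gamma + \alpha y - \beta x}.$$
Hence $u\sim v = a^{\alpha'}b^{\beta'}c^{\gamma'}$ if and only if $\alpha=\alpha'$, $\beta=\beta'$, and $\gamma'-\gamma = \alpha y - \beta x$ has an integer solution $(x,y)$. Note that $z$ is irrelevant, so we take $z=0$. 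We also need the standard size estimates. A word of length $\ell$ has $|\alpha|,|\beta|\le \ell$ and $|\gamma| \le \ell^2$, since $c$-exponents are quadratically distorted. Conversely, $|a^xb^y|\le |x|+|y|$.

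\textbf{Upper bound.} Let $|u|+|v|\le n$ and set $\Delta = \gamma'-\gamma$, so $|\Delta|\le 2n^2$.
\begin{itemize}
\item If $\alpha=\beta=0$, then $u$ is central, forcing $u=v$ in $G$ and conjugator length $0$.
\item Otherwise, let $d=\gcd(\alpha,\beta)$, which divides $\Delta$. We need a small solution of $\alpha y - \beta x = \Delta$. Take a Bézout solution with $|x_0|\le|\alpha|$ and $|y_0|\le|\beta|$ of $\alpha y_0-\beta x_0 = d$, and scale it by $\Delta/d$. This gives $|x|,|y|\le |\Delta|\, n \sim n^3$, which is too big. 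Instead, use the freedom $(x,y)\mapsto (x+k\alpha/d,\ y+k\beta/d)$ to reduce $x$ modulo $\alpha/d$ (or $y$ modulo $\beta/d$, whichever is nonzero). Suppose without loss of generality that $|\alpha|\ge|\beta|$ and $\alpha\ne 0$. Reduce $x$ so that $|x|\le|\alpha|\le n$. Then $y = (\Delta+\beta x)/\alpha$, so $|y|\le (2n^2+n^2)/|\alpha| \le 3n^2$.
\end{itemize}
This gives $|w|\le |x|+|y| = O(n^2)$. Since $\CL(u,v)$ depends only on the elements represented, this yields $\CL(n)\preceq n^2$.

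\textbf{Lower bound.} This step needs a family where every conjugator is long. Take $u=a$, so $\alpha=1$ and $\beta=0$, and $v = a c^{m}$ with $m = n^2$. Since $c^{n^2} = [a^n,b^n]$ up to sign, the word $v$ has length $O(n)$. The Diophantine equation becomes $y = m$, so every conjugator has the form $a^{x}b^{m}c^{z}$. We must show that $|a^xb^mc^z|\ge |m| = n^2$. This holds because the abelianization map $H\to\Z^2$ is $1$-Lipschitz onto the $\ell^1$ word metric, and the image has $b$-coordinate $m$. Therefore $\CL(n)\succeq n^2$.

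\textbf{Main obstacle.} The only delicate point is the upper-bound step of choosing a small particular solution. Naive Bézout scaling gives cubic bounds, so one must use the reduction modulo the larger of $|\alpha|,|\beta|$ to obtain the quadratic estimate. Everything else is routine normal-form bookkeeping.
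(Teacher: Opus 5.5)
Your proposal is correct and takes essentially the same approach as the paper. Comparing normal forms reduces $uw=wv$ to the single linear Diophantine equation $\alpha y-\beta x=\gamma'-\gamma$, with coefficients of size $O(n)$ and constant term of size $O(n^2)$; a small solution of it gives the quadratic upper bound. You prove the needed B\'ezout-type estimate directly by reducing $x$ modulo $\alpha/d$, where the paper cites Lemma~\ref{linear Diophantine equation}. The lower bound likewise matches the paper's argument: a conjugate pair differing by $c^{n^2}=[a^n,b^n]$, where an exponent-sum argument forces every conjugator to have length at least $n^2$. The only difference there is that you use $a\sim a[a^n,b^n]$ in place of the paper's $b[a^n,b^n]\sim b$.
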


\begin{proof} 
We begin by establishing the upper bound.  Our proof   hinges on the following lemma (which we will call on again in our study of Stallings' group in Section~\ref{Stallings section}).

\begin{lemma}[e.g.\ \cite{BFRT, BrRi2, Kornhauser}] \label{linear Diophantine equation}
	If the Diophantine equation $Ax + By =C$, where $A, B, C \in \Z$,  has a solution with $x, y \in  \Z$, then it has such a solution with  
	\begin{equation}
	  |x|,|y|   \ \leq \   \max\set{|A|, |B|, |C|}.   
\end{equation}  
\end{lemma}

Continuing with the proof of Theorem~\ref{Heisenberg group thm}, 
 for elements 
$$U = \begin{pmatrix}
1  & u & p  \\
  & 1 & \hat{u} \\
  & & 1
\end{pmatrix}, \quad 
V = \begin{pmatrix}
1  & v & q  \\
  & 1 & \hat{v} \\
  & & 1
\end{pmatrix}, \quad 
W = \begin{pmatrix}
1  & x & r  \\
  & 1 & \hat{x} \\
  & & 1
\end{pmatrix}$$ of $H$, the condition $UW = WV$ amounts to 
$$\begin{pmatrix}
1  & u+ x & r+u \hat{x}+p  \\
  & 1 & \hat{u} + \hat{x} \\
  & & 1
\end{pmatrix} \ =  \ \begin{pmatrix}
1  & v+ x & q +\hat{v} x+ r  \\
  & 1 & \hat{v} + \hat{x} \\
  & & 1
\end{pmatrix}.$$
So, given such $U$ and $V$ such that $U \sim V$ in $H$, it must be that $u=v$ and $\hat{u}=\hat{v}$, and the equation 
\begin{equation} \label{the pivotal Diophantine eqn}
	 \hat{u} x  - u \hat{x}  \ = \ p  - q,   
\end{equation}    
in which $x$ and $\hat{x}$ are viewed as variables, has a solution over $\Z$.    Lemma~\ref{linear Diophantine equation}   tells us that \eqref{the pivotal Diophantine eqn}  has a solution with
\begin{equation} \label{bounds on x and x hat}
	  |x|,|\hat{x}|   \ \leq \   \max\set{|u|, |\hat{u}|, |p-q|}.    
\end{equation}  
If $U$ and $V$ are expressed as words on $a$, $b$ and $c$ whose lengths sum to $n$, then $|u| + |\hat{u}| \leq n$ and $|p| + |q| \leq n^2$.   
We may take the entry $r$ in $W$ to be $0$, and then given  \eqref{bounds on x and x hat}, we can represent $W$ as a word  on $a$, $b$ and $c$ of length at most $2n^2$.

Next we will show that $\CL_H(n) \succeq n^2$. 
Suppose $n \in \N$.  Let $u = b [a^n, b^n]$ and $v=b$.  Then $|u| = 4n +1$ and $|v| =1$. These words represent conjugate elements of $H$ because  $$u \ = \  b [a^n, b^n]  \ = \  bc^{n^2} \ = \ a^{-n^2} b a^{n^2}$$ in $H$. For  $h = a^{\alpha} b^{\beta} c^{\gamma} \in H$ the condition for $uh=hv$ in $H$ is $$bc^{n^2} \, \left( a^{\alpha} b^{\beta} c^{\gamma} \right)   \ = \       \left(a^{\alpha} b^{\beta} c^{\gamma}\right) \, b,$$ or equivalently, 
$$ a^{\alpha} b^{\beta  + 1} c^{n^2 +   \alpha + \gamma}   \ = \       a^{\alpha} b^{\beta + 1} c^{\gamma}.$$   Comparing the powers of $c$ we   find $\alpha = -n^2$.  If a word $w$ represents $h$ in $H$, then the exponent sum of the letters $a^{\pm 1}$ in $w$ is $\alpha$. So  $|w| \geq n^2$.   
\end{proof}

\begin{Remark}  \label{higher diml Heisenberg}
The  $(2m+1)$-dimensional integral Heisenberg group $\mathcal{H}_{2m+1}(\Z)$,  for $m = 1, 2, \ldots$ is the group  of integer matrices 
$$\begin{pmatrix}
1  & \beta_1 & \cdots & \beta_m & \gamma  \\
  & 1 &  &  & \alpha_1 \\
  & & \ddots & & \vdots \\ 
  & & & 1 & \alpha_m \\
  &  &  &  & 1 \\   
\end{pmatrix}$$
and is presented by  
 \begin{equation*} \label{pres higher Heisenberg}
  \left\langle \; a_1, \ldots, a_m, b_1, \ldots, b_m ,c \; \left| \; [a_i,b_i]c^{-1}, \; [a_i,a_j],  \;  [b_i,b_j],  \; [a_i,b_j], \; [a_i,c], \; [b_i,c] \ \   \forall i\neq j  \; \right.  \right\rangle.
\end{equation*}
Following the proof of Theorem \ref{Heisenberg group thm}, one sees
that  $\mathcal{H}_{2m+1}(\Z)$ has $\CL(n) \simeq n^2$.  For the lower bound consider $b_1 c^{n^2} \sim b_1$.  For the upper bound the pivotal Diophantine equation \eqref{the pivotal Diophantine eqn} becomes 
\begin{equation} \label{longer pivotal Diophantine eqn}
	 \hat{u}_1 x_1 + \cdots \hat{u}_m x_m  - u_1 \hat{x}_1 - \cdots - u_m \hat{x}_m  \ = \ p  - q,   
\end{equation} 
and, assuming a solution exists, \cite{BFRT} tells us there is one with 
 	\begin{equation*} \label{bounds on x and xhat}
	  |x_i|,   |\hat{x}_i|    \ \leq \   \max\set{|u_1|, \ldots, |u_m|, |\hat{u}_1|, \ldots, |\hat{u}_m|, |p-q|}    
\end{equation*}  for all $i$.  
\end{Remark}

For class-$2$  finitely generated   nilpotent groups    Ji, Ogle, \& Ramsey \cite{JOR}  outline why the conjugator length functions  should grow at most polynomially.  In  \cite{BrRi1}  Bridson \& Riley establish explicit  upper bounds on the polynomial degree, and give examples showing those bounds to be optimal in general.

For general class-$c$ finitely generated nilpotent groups Macdonald,   Myasnikov,   Nikolaev,  \& Vassileva \cite[Thm.\ 4.7]{MMNV} obtain a polynomial upper bound of degree $2^m(6mc^2)^{m^2}$  on conjugator length functions,  where  $m$ is the number of elements in a Mal'cev basis for the group.

In Section~\ref{sec: Polynomial conjugacy length functions} we will describe families of examples from \cite{BrRi2, BrRi1} of nilpotent groups which exhibit polynomially growing conjugator length functions of all integer degrees. 

A complete understanding of the conjugator length functions of finitely generated nilpotent groups has yet to be found.

\subsection{Metabelian, polycyclic, and solvable groups}  \label{BS proof section}

\begin{Open problem} Which  functions arise
as  conjugator length functions of  finitely generated (or finitely presented) metabelian groups? What about finitely generated polycyclic groups?  What about groups of the form $\Z^d \rtimes \Z$?        
\end{Open problem}

Noskov proved that finitely generated metabelian groups have decidable conjugacy problem \cite{Noskov}.  Sale \cite{Sale3} explored the behaviour of conjugator length functions for group extensions leading, in particular,  to linear bounds for semi-direct products $\Z^d \rtimes_M \Z$ when  $M$ is  diagonalisable over $\mathbb{R}$ with positive eigenvalues.
Bridson and Riley~\cite{BrRi2} recently determined the conjugator length functions of the model filiform groups, which also have the form $\Z^d \rtimes \Z$ and are discussed further in Section~\ref{sec: Polynomial conjugacy length functions}.
A complete analysis of the  conjugator length functions of groups of the form $\Z^d \rtimes \Z$ is not yet available.  

Here, we present an elementary proof for an important special case of Sale's result: the metabelian Baumslag--Solitar groups  
$$\textup{BS}(1, m) = \langle a,s \mid s^{-1} a s = a^{m} \rangle.$$

\begin{thm}[\cite{Sale3}] \label{BS CL}
For all $m \geq 2$,  the conjugator length function of  $G= \textup{BS}(1, m)$   satisfies  $\CL(n) \simeq n$.	 
\end{thm}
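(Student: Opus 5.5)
Since $\CL(n)\succeq n$ holds trivially (e.g.\ consider $u=s a s^{-1}\cdot$ suitable examples, or simply note the equivalence relation $\simeq$ absorbs a linear term), the work is the upper bound $\CL(n)\preceq n$. I will use the standard model $G\cong \Z[1/m]\rtimes\Z$, with $a\mapsto (1,0)$ and $s\mapsto(0,1)$, acting by $(x,k)(y,l)=(x+m^{k}y,\,k+l)$ (adjusting sign conventions to match $s^{-1}as=a^m$). A word $u$ of length at most $n$ represents $(x,k)$ with $|k|\le n$, and $x$ is a rational of the form $N/m^{j}$ with $0\le j\le n$ and $|N|\le n\,m^{n}$. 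A key observation is that an element $(y,l)$ admits a representing word of length $O(n)$ when $|l|\le n$ and $y=N/m^j$ with $j\le C n$ and $|N|\le m^{Cn}$: write $N$ in base $m$ and use a Horner-scheme word $s^{-j}\,a^{d_0} s\, a^{d_1} s\cdots$, which has length $O(\log_m|N|+j+|l|)$.

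I will split into cases according to the $\Z$-coordinate. If $u=(x,k)$ and $v=(x',k')$ are conjugate, then $k=k'$. \emph{Case $k=0$:} $u,v$ lie in the abelian normal subgroup $\Z[1/m]$, and conjugation by $(y,l)$ sends $x$ to $m^{\pm l}x$, so a conjugator is $s^{l}$ with $m^{l}=x'/x$ (up to sign convention). The sizes of numerators and denominators of $x,x'$ force $|l|\le O(n)$, since $x'/x$ has numerator and denominator bounded by $m^{O(n)}$. \emph{Case $k\ne0$:} conjugating $(x,k)$ by $(y,0)$ yields $(x+(1-m^{k})y,\,k)$ up to sign conventions, and conjugating by $s^l$ multiplies the first coordinate by $m^{\pm l}$. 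Hence a general conjugator is $(y,l)$ with $m^{l}x-x'=(m^{k}-1)y'$ for some suitable $y'$. First I will choose $l$ so that $|l|\le |k|+O(n)$; by periodicity this is possible because multiplying by $m^{k}$ fixes the class of $x$ modulo $(m^k-1)\Z[1/m]$, so $l$ only matters modulo $k$ after accounting for denominators. Then $y=(m^{l}x-x')/(m^{k}-1)$ is forced, and its numerator and denominator are bounded by $m^{O(n)}$ with denominator a power of $m$. By the observation above, $(y,l)$ is represented by a word of length $O(n)$.

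I expect the main obstacle to be the step bounding $l$ in the case $k\neq0$. Concretely, I need that whenever some $l$ works, one works with $|l|=O(n)$. I will handle it by noting that $\Z[1/m]/(m^k-1)\Z[1/m]\cong \Z/(m^k-1)$, on which multiplication by $m$ has order dividing $|k|$. So if some $l$ satisfies the congruence, then one does with $0\le l<|k|\le n$. The remaining care is in verifying the bound $|y|\le m^{O(n)}$ and that its $m$-adic denominator exponent is $O(n)$, both of which follow directly from the size estimates on $x$ and $x'$.
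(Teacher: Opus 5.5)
Your proposal is correct and is essentially the paper's argument written in the coordinates of $\Z[1/m]\rtimes\Z$. Your observation that only $l \bmod k$ matters is the paper's step of replacing $w$ by $u^kw$ to force $0\le p'<\alpha$; after that, the $a$-part of the conjugator is determined by the conjugacy equation and has length logarithmic in its size, just as in your Horner estimate (your coordinates also let you skip the paper's preliminary cyclic permutation and its reduction to $r=0$). For the lower bound, you are right that $n\preceq\CL(n)$ is automatic under the paper's definition of $\preceq$, but the paper also proves an honest linear lower bound via $s^nas^{-n}\sim a$: every conjugator has the form $s^{n+p}a^qs^{-p}$ and so has $s$-exponent sum $n$.
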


\begin{proof}
The relations $a^{\pm 1}s =sa^{\pm m}$ and  $s^{-1} a^{\pm 1} = a^{\pm m}s^{-1}$   can  be used to shuffle each letter $s$ in a word on $\set{a, s}^{\pm 1}$ to the front and each $s^{-1}$ to the end without changing the element of $G$ it represents. 
Thus we can rewrite a word $w$  in the form  $s^p a^q  s^{-r}$ for some $p,q,r \in \Z$ with $p, r \geq 0$,  $p+ r \leq |w|$ and $\abs{q} \leq m^{|w|}$.   
Similar calculations show that for all $k  \in \Z$,  the distance $d_G(1,a^k)$ is at most a constant times $\ln( \abs{k} +1)$. 

The lower bound $\CL(n) \succeq n$ is witnessed by the conjugate elements  $s^n a s^{-n}$ and $a$.
	The obvious conjugator is $s^n$, for which we note $d_G(1,s^n) =n$.
	The centraliser of $a$ is  $\set{ s^p a^q s^{-p} \mid p,q\in \Z,  \ p\geq 0}$.
	Thus any conjugator between $s^n a s^{-n}$ and $a$ is of the form $s^{n+p} a^q s^{-p}$, and 
	$d_G(1, s^{n+p} a^q s^{-p})$
	 is minimised when $p=q=0$.

As for proving $\CL(n) \preceq n$, suppose $u$ and $v$ are words on $\set{a, s}^{\pm 1}$ such that $u \sim v$ in $G$.  We aim to show that   $\CL(u,v)$ is at most a constant times $n := |u| + |v|$.    
 
Rewriting $u$ and $v$ in the form $s^pa^qs^{-r}$ as above, and then replacing each with a suitable cyclic permutation of itself, we can assume $u = s^{\alpha} a^{\beta}$ and  $v = s^{\gamma} a^{\delta}$ in $G$ for some $\alpha, \beta, \gamma, \delta \in \Z$ with $\abs{\alpha} + \abs{\gamma} +  \ln( \abs{\beta} +1) +  \ln( \abs{\delta} +1)$   at most a constant times $n$.
The fact that $u$ and $v$ are conjugate implies that $\alpha = \gamma$ and, by replacing $u$ and $v$ by their inverses if necessary, we may assume $\alpha \geq 0$.

Suppose  we have an element of $G$ expressed as $w = s^p a^q  s^{-r}$ where $p,q,r \in \Z$ and $p,r \geq 0$.   
As $\alpha, p, r \geq 0$, 
\begin{eqnarray*}
uw & = \  s^{\alpha} a^{\beta} s^p a^q  s^{-r}  & = \ s^{\alpha + p} a^{\beta m^p +q } s^{-r}  \\
wv &  =  \   s^p a^q  s^{-r} s^{\alpha} a^{\delta}    & =  \ s^{\alpha + p} a^{q m^{\alpha} +\delta m^r} s^{-r} 
\end{eqnarray*}
in $G$.
So, as $a$ has infinite order in $G$, the condition  $uw=wv$ in $G$ is equivalent to 
\begin{equation}
q(m^\alpha -1)  \ = \  \beta m^p - \delta m^r. \label{characterization}
\end{equation}
In the case $\alpha =0$, this   reduces to $\beta m^p = \delta m^r$ and we see that there exists $k \in \Z$ such that $u s^k = s^k v$ in $G$ and $\abs{k}$ is at most a constant times  $\ln (\abs{\beta}+1) + \ln (\abs{\delta}+1) \leq n$, as required.       

By interchanging the roles of $u$ and $v$ and swapping $w$ for $w^{-1}$ if necessary, we may assume $p \geq r$. 
Then  \eqref{characterization}  implies that $m^r$ divides $q$ and so   $s^{p-r} a^{q/{m^r}}$ conjugates $u$ to $v$, so we may assume that $r=0$.  
 Since  for all $k \in \Z$ we have $u (u^k w)=(u^k w)v$ in $G$, 
we may replace $w$ with $w' = u^kw$, choosing $k$ so that $p' := p+ k \alpha$ satisfies $\alpha > p'  \geq 0$.    
Then $w' = (s^{\alpha} a^\beta)^{k} s^p a^q  = s^{p'} a^{q'}$ in $G$, 
  where $q' = (  \beta m^{p'} - \delta ) / (m^\alpha -1)$ by \eqref{characterization}.  
But then $$\abs{w'} \ \leq \ p' + \ln\left(\abs{q'} +1\right) \ < \ \alpha +    \ln\left( \beta m^{p'}  +  \abs{\delta}  +1\right),$$ which is at most a constant times $n$, as required.
\end{proof}

Sale also proved  that the conjugator length functions of the Lamplighter groups $(\Z / q\Z) \wr \Z$ and  of $\Z \wr \Z$ are linear \cite{Sale6}, and he obtained exponential upper bounds for some $\Z^d \rtimes \Z^k$ (with $k>1$) \cite{Sale3}, leading to  
exponential upper bounds on conjugator lengths of certain  conjugate pairs of elements of Hilbert modular groups and linear upper bounds for others.

As for  finitely generated polycyclic groups,  Remeslennikov~\cite{Remeslennikov}  proved them to be  conjugacy separable.  Since they are finitely presentable and have decidable word problem,  it follows that they have decidable conjugacy problems  and therefore recursive conjugator length functions (see Section~\ref{the CP and algorithms}).  We are not aware of further general bounds on the conjugator length functions of polycyclic groups.

There is little scope for general results about the conjugator length functions of finitely generated solvable groups because Kharlampovich~\cite{Kharlampovich} exhibited a finitely presented solvable group of derived length three with undecidable word problem and so undecidable conjugacy problem, meaning that no recursive function can bound  its conjugator length function from above. 

In \cite{Sale5} Sale showed that the conjugator length functions of free solvable groups are at most cubic.  To prove this,
he used the \emph{Magnus embedding}, which exhibits  free solvable groups as subgroups of wreath products,  thereby giving access to a more accessible setting in which to understand conjugacy. 
With this motivation,  Sale was also led to establish bounds on  conjugator length in wreath produces $A \wr B$, which he did in terms of the distortion functions of cyclic subgroups of $B$.

\subsection{Bestvina--Brady groups}  
 \label{Stallings section}

  Define  $$D \ = \ F(\alpha,\beta) \times F(\gamma,\delta) \times F(\varepsilon, \zeta).$$  Stallings' group $S$ is the kernel of the map $D  \to   \Z$ in which   $\alpha$, $\beta$, $\gamma$, $\delta$, $\varepsilon$, and $\zeta$ are all mapped to 
  the same generator of $\Z$.   We will use the finite presentation      
 \begin{equation} \label{pres}
S \ =   \  H \dot{\ast}_{K} \ = \ \left\langle \; a,b,c,d,t \; \left| \; [a,c], \; [a,d], \; [b,c], \; [b,d], \; t^a=t^b=t^c=t^d  \; \right.  \right\rangle
\end{equation}
of $G$  from \cite{BBMS, Gersten5}, which describes it as the HNN-extension
of $H = F(a,b) \times F(c,d)$ where the stable letter $t$ commutes with  the subgroup $K$ consisting of elements that can be represented by words on $\set{ a, b, c, d}^{\pm 1}$ of zero exponent-sum.  Stallings' constructed $S$ in \cite{Stallings} as the
first example  of a finitely presented group whose 3-dimensional integral homology
is not finitely generated --- in particular,  $G$ is not ``of type ${\rm{FP}}_3$.''  This  exoticism makes it an interesting case study for geometric invariants, including the conjugator length function.

\begin{thm} \label{Stallings CL}
The conjugator length function of  Stallings' group grows quadratically.	 
\end{thm}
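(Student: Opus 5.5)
For the upper bound we use Corollary~\ref{cor:HNN simplify}\ref{item:3}, since the presentation \eqref{pres} exhibits $S$ as $H\dot{\ast}_K$ with $t$ commuting with $K$. Given $u\sim v$ with $|u|+|v|\le n$, we replace them by $u',v'$ with $|u'|\le|u|$, $|v'|\le|v|$ and $\CL_S(u,v)\le \CL_S(u',v')+n$. We may also take an annular diagram $\Omega'$ for $u'\sim v'$ with no $t$-annuli, in which every $t$-corridor is radial. There are then two cases.

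\emph{Case 1: $u'$ contains no $t^{\pm1}$.} Then $u'\sim v'$ in $H=F(a,b)\times F(c,d)$, and $\CL_S(u',v')=\CL_H(u',v')$. A direct product of free groups has linear conjugator length by Lemma~\ref{lem:free groups}, applied coordinatewise. Projecting $u'$ to $u_1\in F(a,b)$ and $u_2\in F(c,d)$, and similarly for $v'$, the conjugator $(w_1,w_2)$ can be chosen with $|w_i|\le n$. So this case contributes only a linear bound.

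\emph{Case 2: there are radial $t$-corridors.} Choose one and cut $\Omega'$ along a side of it. This gives a conjugator $w=\sigma$ read along the corridor side, so that $u''\sigma=\sigma v''$ for suitable cyclic permutations $u'',v''$ of $u',v'$, which costs at most $n$ more. The side of a corridor is a word in $K$ that is $t$-free. The complementary regions between consecutive radial corridors are diagrams over $H$ whose boundaries consist of subwords of $u'$ and $v'$ together with two corridor sides. The conjugator $\sigma\in K$ need not be short, but we are free to change it. If $u''\sigma=\sigma v''$, then $u''$ and $v''$ both have $t$-exponent sum $\ne0$ and have the same form $t$-structure. The set of all conjugators is $\sigma C(v'')$. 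Since $t$ commutes with $K$, we reduce to understanding the element $x\in K\subset H$ and the equations it must satisfy. Writing $u''=t^{\epsilon_1}h_1\cdots t^{\epsilon_k}h_k$ and similarly for $v''$, with $h_i,g_i\in H$ of total length $\le n$, the conjugacy condition says roughly that $h_i x_{i}= x_{i-1}g_i$ with all $x_i\in K$. Taking the normal-form analysis in the two free factors, this says that in each free factor $x^{(j)}$ is determined up to a centralizer (a cyclic group $\langle z_j\rangle$ with $|z_j|\le n$) by data of length $O(n)$. So $x=(y_1z_1^{m_1},y_2z_2^{m_2})$ with $|y_j|\le n$, and the constraint $x\in K$ is one linear equation $e(y_1)+m_1e(z_1)+e(y_2)+m_2e(z_2)=0$ in the exponent sums. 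By Lemma~\ref{linear Diophantine equation} there is a solution with $|m_1|,|m_2|\le O(n)$. Hence $|x|_H=O(n^2)$, and since the inclusion $H\to S$ is Lipschitz, $\CL_S(n)\preceq n^2$.

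The main obstacle is making Case 2 rigorous: one must show that the $t$-letters force the conjugator into $K$ up to powers of $u''$, and that the centralizer structure really is a product of at most two cyclic groups of short generators.

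For the lower bound, we mimic the Heisenberg example. Take $u=t\,(ac^{-1})^{n}\ldots$; more concretely, consider $u=a c^{-1}\,t$ and $v=(a^{n}b^{-n})^{-1}$-twisted versions. The natural candidate is $u=(ab^{-1})(cd^{-1})^{n}t\,$ conjugated into $v=(ab^{-1})t$ via the central-like element $x=(c d^{-1})^{?}$. We choose elements for which any conjugator in $H$ must be $(a^{N},c^{M})$ with $N+M=0$ forced and $N\approx n^2$. The plan is to pick $u=v\cdot[a^n,c^{-n}\ldots]$ analogues in which the exponent-sum equation forces a power $n^2$. We bound conjugator length below by showing that any conjugator's image under the retraction $r$ (killing $t$) projects to $F(a,b)$ as $(ab^{-1})^{m}$-type elements with $|m|\ge n^2$. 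We expect finding an explicit family with the precise $n^2$ forcing to be the delicate part, guided by the Diophantine equation above.
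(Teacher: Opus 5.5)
\paragraph{Upper bound.} Your upper bound follows the paper's route. You reduce with Corollary~\ref{cor:HNN simplify}\ref{item:3} to radial $t$-corridors. The $H$-conjugators of the $t$-deleted words then form a short conjugator times $\langle z_1\rangle\times\langle z_2\rangle$, and you solve one exponent-sum equation with Lemma~\ref{linear Diophantine equation}.

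To finish it, you must check, as the paper does, that after changing $x$ every intermediate label $x_i=(h_1\cdots h_i)^{-1}x\,(g_1\cdots g_i)$ is still in $K$. Otherwise $t^{\epsilon_i}$ cannot be moved past it. This holds because the original corridor labels force $z(h_1\cdots h_i)=z(g_1\cdots g_i)$ for every $i$. Two smaller remarks:
\begin{itemize}
\item If a coordinate of $\bar v$ is trivial, its centralizer is a whole free factor rather than cyclic; this only makes the equation easier.
\item Your claim that $u''$ and $v''$ have nonzero $t$-exponent sum is false (e.g.\ $tat^{-1}b$), but you never use it.
\end{itemize}
The ``main obstacle'' you name is not needed: an upper bound only requires exhibiting one short conjugator.

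\paragraph{Lower bound: a genuine gap.} You give no family of words and no argument, and the ingredients you suggest cannot produce one.
\begin{itemize}
\item Your concrete candidate $(ab^{-1})(cd^{-1})^nt$, $(ab^{-1})t$ is not a conjugate pair. Killing $t$ sends these to $(ab^{-1},(cd^{-1})^n)$ and $(ab^{-1},1)$, which are not conjugate in $H$.
\item Conjugators in $H$ can always be taken of length at most $n$. Also, if $u$ is $t$-free, Corollary~\ref{cor:HNN simplify}\ref{item:3} makes $\CL_S(u,v)$ linear. So the family must involve $t$.
\item Once $t$ is involved, the retraction $r$ killing $t$ discards exactly the constraint that forces length. $r(w)$ is merely some conjugator of $r(u)$ to $r(v)$ in $H$. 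Moreover $r(u^kw)=r(u)^kr(w)$, so when $r(u)\notin K$ the $r$-images of conjugators are not even confined to $K$.
\item Even the natural Heisenberg-style choice with a single $t$ fails. Using only that $t$ commutes with $K$ and that $a$ commutes with $c,d$, one checks that $(at)^n$ conjugates $a^nb^{-n}(cd^{-1})^nc\,t$ to $b^{-n}a^n(cd^{-1})^nc\,t$. That pair has linear conjugator length.
\end{itemize}

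\paragraph{The missing idea.} You need to control \emph{all} conjugators. The paper does this by working in $D=F(\alpha,\beta)\times F(\gamma,\delta)\times F(\varepsilon,\zeta)\supset S$, where centralizers are products of cyclic groups. Take
\[
u=\bigl(\alpha^n\beta^{-n},\,(\gamma\delta^{-1})^n\gamma,\,(\varepsilon\zeta^{-1})^n\varepsilon^{-1}\bigr),
\]
and let $v$ be the same but with first coordinate $\beta^{-n}\alpha^n$. Every conjugator in $D$ has the following coordinates:
\begin{itemize}
\item first coordinate $(\alpha^n\beta^{-n})^{r}\alpha^n$, of exponent sum $n$;
\item second and third coordinates $((\gamma\delta^{-1})^n\gamma)^s$ and $((\varepsilon\zeta^{-1})^n\varepsilon^{-1})^q$.
\end{itemize}
The generators in the last two coordinates have length $2n+1$ but exponent sum only $\pm1$. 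Membership in $S$ forces $n+s-q=0$, so $|s|\ge n/2$ or $|q|\ge n/2$. Hence $|g|_D\ge n^2$, and $|g|_S\succeq|g|_D$ finishes the argument.

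The essential design feature, missing from your single-$t$ attempts, is this: every coordinate that could absorb the exponent-sum deficit of the first coordinate carries a long centralizer generator of small exponent sum.
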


\begin{proof} We start by proving that $\CL_G(n) \preceq n^2$.  
   Suppose $u$ and $v$ are words on $\set{a, b, c, d, t}^{\pm 1}$ representing non-trivial elements of$S$, that $u \sim v$,
    and $|u| + |v| \leq n$. 
    
 In the light of Corollary~\ref{cor:HNN simplify}  and the fact that  $\CL_H(n) \simeq n$ (because $\CL_{A \times B} \simeq \max\{\CL_A,\CL_B\}$ for all groups), it suffices to consider only the case where $u$ and $v$  admit an annular diagram $\Delta$ over the presentation \eqref{pres}  in which there is at least one $t$-corridor, and  all the $t$-corridors are radial, pairing off the letters $t^{\pm 1}$ in $u$ with  the $t^{\pm 1}$ in $v$.

\begin{figure}[ht]
\begin{overpic}[scale=1.0,unit=1mm]{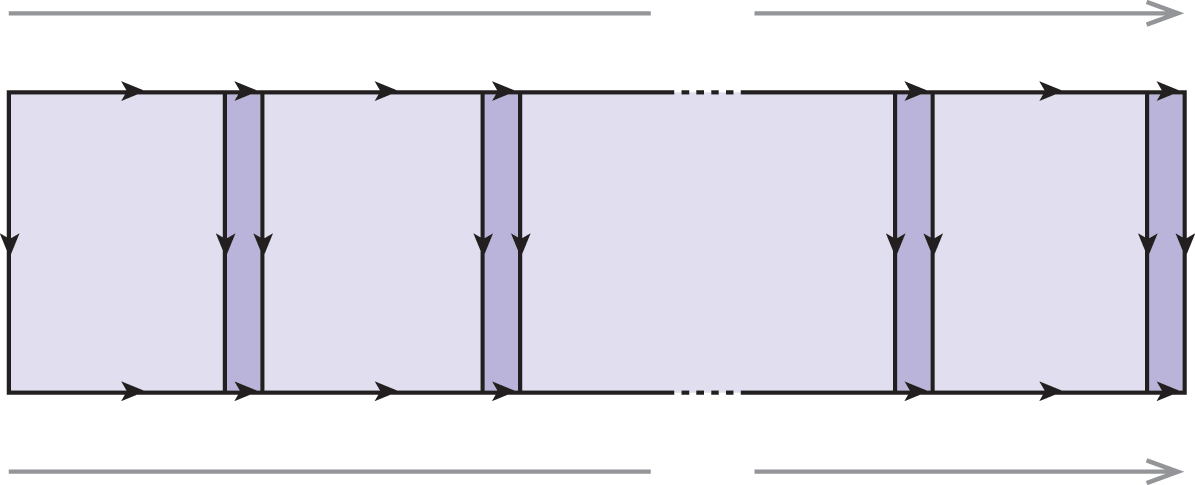}
 \put(59,39){\small{$u$}}     
 \put(59,0.5){\small{$v$}}     
 \put(10,35){\small{$u_1$}}     
 \put(32,35){\small{$u_2$}}     
 \put(87,35){\small{$u_l$}}     
 \put(10,4){\small{$v_1$}}     
 \put(32,4){\small{$v_2$}}     
 \put(87,4){\small{$v_l$}}
 \put(15,20){\small{$w_1$}}     
 \put(36,20){\small{$w_2$}}     
 \put(68,20){\small{$w_{l-1}$}}     
 \put(-3.5,20){\small{$w_l$}}
 \put(92,20){\small{$w_l$}}
 \put(102,20){\small{$w_l$}}
 \put(20,4){\small{$t^{\varepsilon_1}$}}     
 \put(42,4){\small{$t^{\varepsilon_2}$}}     
 \put(77,4){\small{$t^{\varepsilon_{l-1}}$}}
 \put(98,4){\small{$t^{\varepsilon_l}$}}
 \put(20,35){\small{$t^{\varepsilon_1}$}}     
 \put(42,35){\small{$t^{\varepsilon_2}$}}     
 \put(77,35){\small{$t^{\varepsilon_{l-1}}$}}
 \put(98,35){\small{$t^{\varepsilon_l}$}}
\end{overpic}
 \caption{A van~Kampen diagram for $u w = wv$ in $G$.}
  \label{radial t-corridors diagram}
\end{figure}

Replacing $u$ and $v$ by suitable cyclic permutations, we   have that  $u = u_1 t^{\varepsilon_1}  \cdots u_l t^{\varepsilon_l}$ and  $v = v_1 t^{\varepsilon_1}  \cdots v_l t^{\varepsilon_l}$, for some $\varepsilon_1, \ldots, \varepsilon_l = \pm 1$, where   $\bar{u} := u_1 \cdots u_l$ and $\bar{v} := v_1 \cdots v_l$  are the words obtained from $u$ and $v$ by deleting all $t^{\pm 1}$, and   $l \geq 1$ is the number of $t$-corridors in $\Delta$.  
Moreover, there exist   words $w_1, \ldots, w_l$ on $\set{a, b, c, d}^{\pm 1}$ representing elements of $H$
 (words labelling the sides of  the $t$-corridors in $\Delta$) such that $u_{i+1}   w_{i+1} = w_i  v_{i+1}$ in $H$ for all $i$ (indices mod $l$).    
 This is illustrated in Figure~\ref{radial t-corridors diagram} (which portrays the van Kampen diagram obtained by cutting $\Delta$ open along the side of the last $t$-corridor).
 In fact, by looking at the relators in $S$ involving $t$, we see $w_1,\ldots ,w_l \in K$. We also have  $u  w_l = w_l v$ in $G$ and $\bar{u} w_l = w_l \bar{v}$ in $H= F(a,b) \times F(c,d)$.

Let $z: H \to \Z$ be the map sending $a, b, c, d$ to a fixed generator of $\Z$.   So $K = \Ker \, z$.

Let $\theta_1\in F(a,b)$, $\phi_1\in F(c,d)$ be such that $\theta_1\phi_1$ is a minimal-length conjugator for $\bar{u}\sim\bar{v}$ in $H$.  We have $|\theta_1|+|\phi_1| \leq n$.
Let $\theta_2\in F(a,b)$, $\phi_2\in F(c,d)$ be such that $\bar{v} = \theta_2\phi_2$.
We also have $|\theta_2|+|\phi_2| \leq n$.
The set of \emph{all} $w \in H$ such that $\bar{u}w=w\bar{v}$ in $H$ is
$$W = \set{ \left. \theta_1  \theta_ 2^p  \phi_1  \phi_ 2^q  \,  \right| \, p, q \in \Z}.$$
We know there is some such $\theta_1  \theta_ 2^p  \phi_1  \phi_ 2^q$ in $K$, namely $w_l$.  So the equation $$p \, z(\theta_ 2)  +   q \, z(\phi_ 2)  \ = \   - z(\theta_1)  -   z(\phi_1)$$ has a solution $p, q \in \Z$.  So, by Lemma~\ref{linear Diophantine equation},  it has a solution with 
$\abs{p}, \abs{q} \leq \max \set{ \abs{z(\theta_ 2)}, \abs{z(\phi_ 2)}, \abs{ - z(\theta_1)  -   z(\phi_1)}  }$, and thus with $\abs{p}, \abs{q} \leq n$.   
Define $w' := \theta_1  \theta_ 2^p  \phi_1  \phi_ 2^q$ for these $p$ and $q$,  and note that $w'$ has length at most a constant times $n^2$.    

Suppose $i \in \set{1, \ldots, l}$.  
Since $w \in K$ and $u_i^{-1} \cdots u_1^{-1} w v_1 \cdots v_i  = w_i \in K$, we deduce that     $z(u_i^{-1} \cdots u_1^{-1}   v_1 \cdots v_i) =0$, and therefore, as $w' \in K$, we learn that   $w'_i :=  u_i^{-1} \cdots u_1^{-1} w' v_1 \cdots v_i    \in K$ also.  
Since $t$ commutes with $K$, we get $uw'=w'v$ in $S$, via a diagram like that in Figure~\ref{radial t-corridors diagram}, but with $w_i$ replaced by $w'_i$ and $t$-corridors replaced by van~Kampen diagrams for $t^{\varepsilon_i}w'_i=w'_it^{\varepsilon_i}$, for all $i$. 
 
\medskip

For the bound $\CL_S(n) \succeq n^2$,  take an arbitrary integer $n$ and consider
\begin{align*}
u & = \ \left(\alpha^n \beta^{-n}, (\gamma \delta^{-1})^n \gamma, (\varepsilon \zeta^{-1})^n \varepsilon^{-1} \right) \\
v & = \  \left(\beta^{-n} \alpha^n, (\gamma \delta^{-1})^n \gamma, (\varepsilon \zeta^{-1})^n \varepsilon^{-1} \right) \\
w & = \  \left(\alpha^n, 1, 1\right).
\end{align*}
Then $u$ and $v$ are elements of $S$ and $uw=wv$ in $D$.  
Let $C$ be the centraliser of $u$ in $D$. The set of all conjugators for $u\sim v$ in $D$ is $$Cw  \ = \  \set{ \left. \, g=  \left(  (\alpha^n \beta^{-n})^r \alpha^n, ((\gamma \delta^{-1})^n \gamma)^s, ((\varepsilon \zeta^{-1})^n \varepsilon^{-1})^t \right)  \,  \right| \, r,s,t\in \Z \, },$$ which intersects $S$ in $\set{ g \mid  n + s -t  = 0 }$. So $u\sim v$ in $S$ and any conjugator $g \in S$ has either $\abs{s} \geq n/2$ or $\abs{t}  \geq n/2$.  Either way, we get that $\abs{g}_D \geq n^2$.  It follows that     $\abs{g}_S \succeq n^2$ (with respect to any fixed finite generating set for $S$) since $\abs{h}_S   \succeq \abs{h}_D$ for all $h \in S$.  
 \end{proof}
 
 Stallings' group is an example of a  \emph{Bestvina--Brady group} --- the kernel of the map from a right-angled Artin group $A$ to $\Z=\<h\>$ that is defined by sending all the standard generators to $h$.  Both these and more general kernels are known to have decidable conjugacy problem thanks to   \cite[Theorem~3.1]{Bridson12}.  
    
    \begin{Open problem}
    Estimate the conjugator length functions of Bestvina--Brady groups.
    \end{Open problem}

\subsection{Mapping class groups, 3-manifold groups, right-angled Artin groups (RAAGs),  graph products,  
Coxeter groups,  virtually special groups, and  hierarchically hyperbolic groups}   Hemion  gave the first  solution to the conjugacy problem for mapping class groups \cite{Hemion},  Pr\'{e}aux for 3-manifold groups \cite{Preaux1, Preaux2}, and Servatius for RAAGs \cite{Servatius}. Some of the other groups in the list above are covered by the solution to the conjugacy problem for semihyperbolic groups \cite{AlonsoBridson} discussed above, including fundamental groups of special cube complexes, Coxeter groups, and certain graph products.

Mapping class groups have linear conjugator length function:  Masur \& Minsky \cite{MM00} obtained a linear upper bound 
on $\CL(u,v)$ for   pairs of conjugate pseudo-Anosov elements and the work of  J.\ Tao \cite{Tao} covers the remaining cases.  Behrstock \& Dru\c{t}u \cite{Behrstock-Drutu} gave new proofs of these bounds  and established quadratic upper bounds on the conjugator length functions of fundamental groups of prime 3-manifolds --- some classes of which are also covered in \cite{JOR, Sale3}. From Servatius' solution to the conjugacy problem for RAAGs \cite{Servatius} it follows that the conjugator length function grows linearly. (First cyclically reduce both input words.  Then a sequence of cyclic permutations and applications of commutator relations will take one cyclically reduced word to the other, assuming the input words were conjugate.)  
More generally, Genevois recently gave bounds on the conjugator length functions of graph products \cite{Genevois}.

Generalising \cite{MM00,Behrstock-Drutu}, Abbott \& Behrstock \cite{AbBe} established upper bounds on the conjugator lengths of 
certain pairs of elements in hierarchically hyperbolic groups.   In the special case of conjugate pairs 
of infinite order elements of virtually compact special   groups, they obtain a linear upper bound.

\subsection{Lattices in Lie groups}  Grunewald \& Segal \cite{GrunewaldSegal} solved the conjugacy problem for arithmetic groups; according to the Margulis Arithmeticity Theorem, this covers all irreducible lattices in higher rank semismiple Lie groups.  The behaviour of the conjugator length function for such lattices is almost wholly unknown, even  for $\SL_n(\Z)$,
but Sale~\cite{Sale4} established linear bounds on $\CL(u,v)$ for $u\sim v$ hyperbolic or unipotent elements of any semisimple Lie group.

\subsection{Free-by-cyclic groups} For every group $G= F \rtimes \Z$, where $F$ is a finite-rank free group,
there is an  algorithm solving the conjugacy problem   \cite{BMMV, BridsonGroves}. 

\begin{Open problem} \label{Op:FZ}
	Are the conjugator length functions of free-by-cyclic groups all linear?
\end{Open problem}

When $G$ is hyperbolic its conjugator length function is linear (see Section~\ref{sec:hyp groups}).  
To approach the general case, one can use the fact that every free-by-cyclic group is hyperbolic relative to a  family of free-by-cyclic subgroups,  where  the defining automorphisms of these subgroups are polynomial-growing --- see \cite{BFWOld} for history and references.  Antol\'in and Sale proved \cite{AS} that if a group $G$ is non-degenerately hyperbolic relative to the family of  subgroups $H_\omega\ (\omega \in \Omega)$, then $\CL_G(n) \simeq \max\{\CL_{H_\omega}(n)  : \omega\in\Omega\}+n$.  This reduces  Problem \ref{Op:FZ} to the case where the defining automorphism
is polynomially growing.  In \cite{BrRiSa2} we prove that a prototype family of such groups have linear conjugator length functions, namely the groups $F \rtimes_\varphi \Z$ where 
  $F = F(a_1, \ldots, a_m)$ and 	$\varphi(a_i) = a_ia_{i-1}$  for $2\leq i \leq m$  and $\varphi(a_1)=a_1$.

\subsection{One-relator groups}  Famously, the conjugacy problem for one-relator groups remains open in general, so the question of bounding their conjugator length functions is out of reach.  Gillis \cite{Gillis2} recently proved that the conjugator length function of the Baumslag--Gersten group $\langle a,b \mid (b^{-1}a b)^{-1} a (b^{-1}a b) = a^2 \rangle$ is bounded above and below by towers of exponentials of logarithmic height and  asked whether this group has the fastest growing conjugator length function among all one-relator groups, echoing a longstanding question about the Dehn functions of these
groups.

\subsection{Thompson's group $F$ and its cousins}   \label{Thompson's group etc section} The conjugacy problem for Thompson's group  $V$ was solved by Higman  \cite[Theorem~9.8]{Higman2} and for $F$ it was solved by Guba \& Sapir  \cite{GubaSapir2}.    
Belk \&   Matucci \cite{BM1} prove that the conjugator length of $F$ is quadratic, and establish a quadratic lower bound for Thompson's groups  $T$ and $V$.  They conjecture upper bounds of $n^2$ and $(n \log n)^2$, respectively, for $T$ and $V$.      

\subsection{Permanence results}  \label{permanence} It is  easy to see that the conjugator length function of a direct product is the maximum of the conjugator length functions of the factors.
 The same upper bound holds for free products --- a special case of the result for relatively hyperbolic groups \cite{AS}.  
 
 There are upper bounds on the conjugator length functions of wreath products  in \cite{Sale5}.
 For more general group extensions, conjugacy can be complicated, as evidenced by  the examples of Collins \& Miller where the solvability of the conjugacy problem does not pass to or from index-2 subgroups \cite{CollinsMiller}.  Methods for estimating upper bounds on conjugator length functions for some types of extension can be found in \cite{Sale3}.  

 The infinite dihedral group $D_{\infty} = \langle a,b \mid a^2, \, b^2 \rangle$ is an elementary example where passing to a finite index subgroup, namely $\Z$, disrupts conjugator length. The conjugator length function of $D_{\infty}$ is  linear but unbounded, whereas  $\CL_\Z(n) = 0$.

\subsection{Locally compact groups of Euclidean isometries} \label{Euclidean isoms} In a recent paper \cite{SantosRegoSchwer} Santos Rego and Schwer extend the definition of conjugator length to locally compact groups and show that the conjugator length functions of \emph{split} locally compact subgroups $H$ of the full isometry group $\textup{Isom}(\mathbb{E}^n)$ of Euclidean $n$-space grow at most linearly.  The group  $\textup{Isom}(\mathbb{E}^n)$ can be expressed as the semi-direct product  $\mathbb{R}^n \rtimes O(n)$ of translations and rotations.  To say $H \leq \textup{Isom}(\mathbb{E}^n)$ \emph{splits} means it inherits a semi-direct product structure $H = T_H \rtimes H_0$ where  $T_H$ and $H_0$ are the intersections of $H$ with the $\mathbb{R}^n$ and $O(n)$ factors, respectively.  Examples of  $H$ where their theorem applies include affine Coxeter groups and and split crystallographic groups.

\subsection{Groups with quadratic Dehn functions}  Hyperbolic groups are precisely the finitely presentable groups with the slowest possible growing Dehn functions, namely linear or, equivalently,  strictly subquadratic. A wide assortment of finitely presented groups have the next slowest growing Dehn functions --- namely, quadratically growing.  Many of these have conjugator length functions growing at most linearly --- for example,  the free-by-cyclic groups of \cite{BrRiSa2}.  Stallings' group   and Thompson's group $F$  both have quadratic Dehn functions (see \cite{DERY},    \cite{Allcock, OS4}, and \cite{Guba}), but have conjugator length functions  that grow at least quadratically (see Sections~\ref{Stallings section} and \ref{Thompson's group etc section}). Olshanskii \& Sapir \cite{OS6}, solving a problem of Rips,  constructed the first example of a group with quadratic Dehn function and undecidable conjugacy problem. By virtue of our discussion in Section~\ref{the CP and algorithms}, its conjugator length function must be non-recursive.

\subsection{List conjugacy} Bridson \& Howie \cite{BH} generalized  Theorem~\ref{thm: CL for hyperbolic groups}\ref{hyp linear bound} to lists of elements in a hyperbolic group.  They prove that a group $G$ with finite generating set $A$ is  $\delta$-hyperbolic, then there exists $C>0$ such that for all $m \in \N$ and all words $u_1, \ldots, u_m, v_1, \ldots, v_m$ on $A$ such that there exists a  word  $w$ on $A$ for which  $u_i w = w v_i$ in $G$ for all $i$, there exists such a $w$ of length at most $C \max( u_i, v_i) + C$. This bound, and associated results they give on the time complexity of the list-conjugacy problem, are significant for cryptography (cf.\ Section~\ref{Cryptography}).  They  indicate that hyperbolic groups are not a suitable platform for the Anshel-Anshel-Goldfeld protocol. 

In an appendix to  \cite{BH}, Bridson \& Howie exhibit a finitely presented group,  constructed using ideas from \cite{CollinsMiller}, for which the conjugacy problem is decidable but the list-conjugacy problem is not.   It follows that conjugator length for pairs of elements in a group and conjugator length for pairs of lists from the same group can be starkly different.   
 
Beyond these results,  the study of conjugator length for lists has scarcely been touched.

\section{What functions are conjugator length functions?} \label{sec: what functions?}

In Section~\ref{survey} we encountered groups for which the conjugator length function is
zero (e.g., $\Z$), linear (e.g., the rank-2 free group), quadratic (e.g., the 3-dimensional
integral Heisenberg group and Stallings’ group), and not bounded from above by
any recessive function $\N \to \N$ (e.g., examples of Collins \& Miller and of Olshanskii \&
Sapir). This leads to the question: what functions can be conjugator length functions?

This survey focuses on finitely presented groups, but the question is also natural for finitely generated groups.  There, Goffer,  Mihaila, and Osin \cite{GMO} have recently shown that with the luxury of infinitely many defining relations to encode information, it is possible to realize any non-decreasing function that is either bounded or at least linear as a conjugator length function.  Their construction is a variant of an HNN-extension, and their proofs are via diagrams.  Moreover, strikingly, they show that any two non-decreasing functions can be realized as the conjugator length functions of two commensurable finitely generated groups. 

What functions, though, can be conjugator length functions of finitely presented groups?
There are different approaches that one might take to this question: for example, one might explore or construct explicit groups whose 
 conjugator length functions exhibit novel behaviour, or one might concentrate more on existence theorems, for example
 encoding the time functions of Turing machines, or other machines, into conjugator length functions. We shall concentrate
 on the former approach, which is where our   research contributions have been.  We will conclude with some comments on recent developments via the latter approach.

\subsection{Polynomial conjugacy length functions} \label{sec: Polynomial conjugacy length functions}

The first families of groups displaying polynomial conjugacy length functions of all integer degrees  appeared in a pair of companion papers by the first two authors.  The \emph{discrete model filiform groups} $\G_d = \Z^d\rtimes_\phi\Z$ of  \cite{BrRi2}, where the $\Z^d$-factor has            basis $a_1,\dots,a_d$ and the automorphism $\phi$ fixes $a_d$ and maps $a_i\mapsto a_i a_{i+1}$ for $i=1,\dots,d-1$, are shown to have conjugator length functions $\simeq n^d$.  The class-2 nilpotent groups  $G_m$ from \cite{BrRi1} are defined for all $m \geq 1$ as the central extensions of $\Z^{m+2} = \langle a_1, \ldots, a_m, b_1, b_2\rangle$ by $\Z^m  = \langle c_1, \ldots, c_m \rangle$ such that every pair among the generating set $a_1, \ldots, a_m, b_1, b_2, c_1, \ldots, c_m$ commutes except that for  $i=1, \ldots, m$, instead of $a_i$ commuting  with $b_1$, we have  $b_1 a_i = a_i b_1 c_i$, and for  $i=1, \ldots, m-1$, instead of $a_i$ commuting  with $b_2$, we have $b_2 a_i = a_i b_2 c_{i+1}^{-1}$. 
The group $G_m$ has conjugator length function $\simeq n^{m+1}$.

The groups  $\G_d$ are lattices in the model filiform Lie groups.  Their conjugator length functions are found by inducting on $d$, using that $\G_{d}$ is isomorphic to $\G_{d+1}$
modulo its  cyclic centre; the argument proceeds via a careful
analysis of the geometry of cyclic subgroups and centralisers.  

By contrast, the groups $G_m$ are bespoke.  As class 2-nilpotent groups they admit standard normal forms, and the conjugacy condition can be studied by comparing normal forms: for a pair of elements $g$ and $h$, being conjugate amounts to a certain system of linear Diophantine equations having a solution; and the set of all conjugators can be understood in terms of the set of all solutions to that system.  Our proof here of Theorem~\ref{Heisenberg group thm} serves as a prototype for this type of argument.  The groups $G_m$ are constructed so that this system of Diophantine equations has a recursive structure that allows one to determine $\CL_{G_m}(n)$.

The  groups  $G_m$ exhibit a  sharp difference in behaviour between
the Dehn function and the conjugator length function; the former grows $\preceq n^3$ for all $m$ \cite{GHR, Gromov6}, but the latter grows $\simeq n^{m+1}$.  (The Dehn function of $\Gamma_d$ grows $\simeq n^{d+1}$ for all $d$ \cite{BMS,  BridsonPittet}.)

\subsection{Snowflake groups and conjugacy length functions of non-integer degree }  \label{s:snowflakes}

In  the recent paper \cite{BrRi4} the first two authors constructed finitely presented groups with conjugator length functions that grow like $n^{e}$ for a  set of exponents $e$ that is dense in $[2, \infty)$.  The examples begin with   the   \emph{snowflake groups}
 \begin{equation*} 
\left\langle a, b, s, t \mid [a,b]=1,\, s^{-1}a^qs=a^pb,\ t^{-1}a^qt=a^pb^{-1} \right\rangle
 \end{equation*}
 of Bridson \& Brady \cite{BB} which are defined for integers $p>q>0$ and have Dehn functions growing $\simeq n^{2\alpha}$, where $\alpha=\log_2(2p/q)$.  It is shown in \cite{BrRi4} that these groups have linear conjugator length. However,   one can form a certain 2-step HNN-extension
  \begin{equation*} 
 \left\langle a, b, s, t, \theta, z  \ \left| \ \parbox{46mm}{$z$ central, $[b,\theta]=z$,  $[a,b]=1$, \\ $s^{-1}a^qs =a^pb$, $t^{-1}a^qt=a^pb^{-1}$} \right. \right\rangle
 \end{equation*}  
  that  encodes the Dehn function of the snowflake groups into  the distortion of an infinite cyclic subgroup,  and one can then prove that this distortion is faithfully reflected in the conjugator length function of the resulting group --- at a basic level, this too draws on ideas that we saw in the example of the Heisenberg group (Theorem~\ref{Heisenberg group thm}), however the translation to conjugator length has to be mediated through  what we call $\zeta$-maps, which will be explained in Section~\ref{zeta maps}.  

Famously, the interval $(1,2)$ is a gap in the spectrum of Dehn function exponents.  We see no reason to expect the same for conjugator length functions.

    \begin{Open problem}
Is there a dense set of exponents $e$  in the interval $[1,2]$ for which there exist finitely presented groups with conjugator length functions growing like $n^{e}$?    
\end{Open problem}

\subsection{Promoting subgroup distortion to conjugator length}

Our strategy for obtaining a lower bound for the conjugator length function of the Heisenberg group (Theorem~\ref{Heisenberg group thm}) can be leveraged to the following method for constructing   groups with fast-growing conjugator length functions. There are many ways to distort $\Z$-subgroups in finitely presented groups.  Indeed, Olshanskii \cite{Ol3} showed that, modulo straight-forward necessary conditions, every computable function $\N \to \N$ grows like the distortion function of such a subgroup.   There are also elementary constructions of extravagant distortion: $\langle a \rangle$ is exponentially distorted  in $\textup{BS}(1, 2) = \langle a,s \mid s^{-1} a s = a^2 \rangle$; more generally, the distortion of $\langle s_0 \rangle$  in $\langle s_0,  s_1, \ldots, s_k \mid  s_{i}^{-1} s_{i-1} s_{i} = s_{i-1}^2 \textup{ for } i=1, \ldots, k \rangle$  grows like a $k$-fold iterated exponential function; and  $\langle a \rangle \leq \langle a, t \mid (t^{-1} a t )^{-1} a (t^{-1} a t ) = a^2 \rangle$ is distorted yet faster --- see e.g., the survey \cite{RileyNotes}.  So the theorem below leads to elementary examples of groups with fast-growing conjugator length functions.

\begin{thm} \label{Free-product with amalgam theorem}Let $H$ denote the 3-dimensional Heisenberg group with 
center $\<c\>$.
Suppose $\Lambda$ is a group with finite generating $S$ and with an infinite cyclic subgroup $Z = \langle \lambda \rangle$.
Let  $d  = \Dist_Z^{\Lambda} : \N \to \N$ be the \emph{distortion function}   for $Z$ in $\Lambda$, that is, 
$$d(n) \ := \  \max \set{  \  |r|  \  \mid  \     \abs{\lambda^r}_S  \leq n   \ }.$$	
Then, the conjugator length function of the free-product-with-amalgamation $$\Sigma = \Lambda \ast_{\lambda = c} H,$$ 
satisfies 
	$$\max \set{n^2, d(n)} \ \preceq \   \CL_{\Sigma}(n).$$
\end{thm}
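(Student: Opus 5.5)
We prove the two lower bounds $n^2 \preceq \CL_\Sigma(n)$ and $d(n) \preceq \CL_\Sigma(n)$ separately. Both come from the same family of conjugate pairs in $H$, modelled on the lower bound in the proof of Theorem~\ref{Heisenberg group thm}. The key point is that $H$ is a retract-like piece of $\Sigma$ whose $a$-exponent sum survives amalgamation. Since $c$ lies in the commutator subgroup of $H$, the homomorphism $\Sigma \to \Z$ sending $a \mapsto 1$ and $b \mapsto 0$ and killing $\Lambda$ is well defined: it kills $c=\lambda$, so it is compatible with the amalgamation. Call it $\eta$. Then for every $g\in\Sigma$ we have $|\eta(g)| \le |g|$, up to a constant depending on the generating set $S\cup\{a,b\}$.

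For a given $n$ we take $r$ with $|\lambda^r|_S \le n$ and $|r| = d(n)$. We set $u = b\,\omega$, where $\omega$ is a word on $S$ of length at most $n$ representing $\lambda^r = c^r$, and $v = b$. Then $|u|+|v| \le n+2$, and $u = bc^r = a^{-r} b a^{r}$ in $H \le \Sigma$, so $u \sim v$. We must show that every conjugator $g \in \Sigma$ with $ug = gv$ has $|\eta(g)| \ge |r|$. This gives $\CL_\Sigma(n+2) \ge d(n)$ up to constants. For $n^2$ we do the same thing with $\omega = [a^n,b^n]$, exactly as in Theorem~\ref{Heisenberg group thm}.

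The main step, and the expected obstacle, is to control the set of all conjugators in $\Sigma$, not just those in $H$. The conjugators form the coset $C_\Sigma(b)\,a^{r}$, so it suffices to show $C_\Sigma(b) \subseteq H$, or at least that $\eta$ vanishes on $C_\Sigma(b)$. We would use Bass--Serre theory for the amalgam. The element $b$ lies in the vertex group $H$ and is not conjugate in $H$ into $\langle c \rangle$, because its image in $H/\langle c\rangle \cong \Z^2$ is nonzero. Hence $b$ fixes exactly one vertex of the Bass--Serre tree, the vertex $v_H$. Indeed, if $b$ fixed an adjacent edge $hE$, then $b \in h\langle c\rangle h^{-1}$ would be central in $H$, which is false.

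Any $g$ commuting with $b$ permutes the fixed-point set of $b$, so $g$ fixes $v_H$ and therefore $g \in H$. Thus $C_\Sigma(b) = C_H(b) = \{ b^{\beta} c^{\gamma} \}$, on which $\eta$ vanishes. So every conjugator has $\eta$-value $r$, as required. If time allows, we would give the alternative diagrammatic version using the normal form theorem for amalgams, but the tree argument should suffice.

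Finally we assemble the bounds. We have $|r| = d(n) \le |g|$ up to constants, and $\eta(g)=n^2$ in the commutator example. Together with the standard comparison of the generating sets, these give $\max\{n^2, d(n)\} \preceq \CL_\Sigma(n)$.
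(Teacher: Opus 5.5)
Your proof is correct, but it reaches the key fact by a different route from the paper. The paper takes an arbitrary word $w$ with $bc^rw=wb$ in $\Sigma$. Using the normal form theorem for amalgamated free products (its cases (i)--(iii)), it inductively lowers the syllable count of $w$. It ends with a word $\hat w$ on $\{a,b,c\}$ that conjugates $bc^r$ to $b$ in $H$ and has the same $a$-exponent sum as $w$, and the Heisenberg computation then pins that exponent sum down.

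You do two things differently. First, you promote the $a$-exponent sum to a homomorphism $\eta\colon\Sigma\to\Z$, which is well defined because $c=\lambda$ dies in $H/\langle c\rangle$. So its invariance needs no bookkeeping, and $|\eta(g)|\le|g|$ absorbs any distortion of $H$ in $\Sigma$. Second, you get $C_\Sigma(b)=C_H(b)=\{b^{\beta}c^{\gamma}\}$ at once from the Bass--Serre tree. Every edge at $v_H$ has stabilizer the central subgroup $\langle c\rangle$, which does not contain $b$, so $b$ fixes only $v_H$.

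Your version is shorter and proves a bit more: every conjugator of $bc^r$ to $b$ lies in $H$. The cost is importing Bass--Serre theory, whereas the paper stays within the word-and-diagram combinatorics used throughout the survey.

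Your write-up also proves the $n^2$ bound explicitly inside $\Sigma$, by running the same $\eta$-argument on $b[a^n,b^n]\sim b$. The paper instead cites Theorem~\ref{Heisenberg group thm} at that point. That is valid only because its preceding argument holds for every $r$, since a lower bound on $\CL_H$ does not by itself give one on $\CL_\Sigma$.

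There are two harmless slips. The conjugator set is a priori $a^{r}C_\Sigma(b)=C_\Sigma(u)a^{r}$ rather than $C_\Sigma(b)a^{r}$; here they coincide because $a$ normalizes $C_H(b)$. And with the convention $[x,y]=x^{-1}y^{-1}xy$ one has $a^{-r}ba^{r}=bc^{-r}$, not $bc^r$. Neither affects the argument, since only $|\eta(g)|$ is used.
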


\begin{proof}
Fix $n \in \N$.   Suppose $\sigma$ is a  minimal length  word on $S$ that represents ${\l}^{d(n)}$ in $\Lambda$.  So $|\sigma| \leq n$.  Let $u = b \sigma$.  Then $|u| = n +1$, and $u \sim b$ in   $\Sigma$, because in   $\Sigma$,   $$u      \ = \ b {\l}^{d(n)}  \ = \  bc^{d(n)} \ = \ a^{-d(n)} b a^{d(n)}.$$

Now suppose $r \in \Z$ and $w$ is a word on $S \cup \set{a,b,c}$ such that $bc^r w=w b$  in $\Sigma$.  So the word $\nu = bc^r w b^{-1} w^{-1}$  represents $1$  in $\Sigma$.  Express $w$ as $\sigma_1 \mu_1 \cdots \sigma_m \mu_m$ where $\sigma_1, \ldots,  \sigma_m$ are subwords on $S$ and  $\mu_1, \ldots,  \mu_m$ on   $\set{a,b,c}$, and all are non-empty with the possible exceptions of $\sigma_1$ and $\mu_m$.  Let $L(w)$ be the number of $\sigma_1, \ldots,  \sigma_m, \mu_1, \ldots,  \mu_m$ that are  non-empty.    

Assume that $w$ is not a word on $\set{a,b,c}$. As words we have  $$\nu  \ = \  bc^r  \, \sigma_1 \mu_1 \cdots \sigma_m  \mu_m \, b^{-1} \,  \mu_m^{-1} \sigma_m^{-1} \cdots \mu_1^{-1}  \sigma_1^{-1}$$ 
and, because  $\Sigma$ is a  free product with amalgamation and   $w$ is not a word on $\set{a,b,c}$,  
the normal form theorem for amalgamated free products tells us that either (i) some non-empty word $\sigma_i$ represents a    $\lambda^{\alpha}$ in $\Lambda$ for some $\alpha \in \Z$, or (ii) $\sigma_1$ is the empty word and $bc^r  \mu_1$ represents  $c^{\beta}$ in $H$ for some $\beta \in \Z$, or (iii) $\mu_m \, b^{-1} \,  \mu_m^{-1}$ represents a  power of $c$ in $H$. In the event of (i), replace this subword $\sigma_i$ in $\nu$ by  $c^{\alpha}$ and also replace the $\sigma_i^{-1}$ by $c^{-\alpha}$.  In the event of (ii) $bc^r  \mu_1 = c^{\beta}$ and so $\mu_1^{-1} =  bc^{r - \beta}$ in $H$, and we replace the subword $bc^r  \mu_1$ in $\nu$ by $c^{\beta}$ and the subword $\mu_1^{-1}$ by $bc^{r - \beta}$ to give  $$c^{\beta} \sigma_2 \mu_2 \cdots \sigma_m  \mu_m \, b^{-1} \,  \mu_m^{-1} \sigma_m^{-1} \cdots \mu_2^{-1}  \sigma_2^{-1}bc^{r - \beta},$$ which is conjugate to  $$bc^{r} \sigma_2 \mu_2 \cdots \sigma_m  \mu_m \, b^{-1} \,  \mu_m^{-1} \sigma_m^{-1} \cdots \mu_2^{-1}  \sigma_2^{-1}.$$   Case (iii) cannot occur, because  the map $H \onto \Z^2$ that kills $c$ reveals that no such equality holds in $H$.   

Thereby, via Case (i) or (ii), we have  a word $\overline{w}$ such that  $bc^r \overline{w} b^{-1} \overline{w}^{-1}$  represents $1$  in $\Sigma$ and  $L(\overline{w}) < L(w)$.  Further, the substitutions and the conjugation we used to obtained $\overline{w}$ from $w$ do not change the exponent-sum of the $a$-letters in $w$.  

So, by induction, there is a word  $\hat{w}$ on $\set{a,b,c}$  such that $bc^r \hat{w} b^{-1} \hat{w}^{-1}$  represents $1$  in $\Sigma$ (and so in $H$) and the   exponent-sum of the $a$-letters in  $w$ and $\hat{w}$ are equal.  We saw in our proof of Theorem~\ref{Heisenberg group thm} that such word  $\hat{w}$ contains at least $|r|$ occurrences of the letter $a$.  So the same is true of $w$ and therefore  $|w| \geq |r|$.  

Applying this  to  $r=d(n)$ proves that $d(n) \ \preceq \   \CL_{\Sigma}(n)$.  And from Theorem~\ref{Heisenberg group thm} we have $n^2 \ \preceq \   \CL_{H}(n)$, and so  $n^2 \ \preceq \   \CL_{\Sigma}(n)$.  
\end{proof}

  What is missing from Theorem~\ref{Free-product with amalgam theorem} is a matching upper bound on $\CL_{\Sigma}$.  Our next theorem sets out another construction which promotes distortion functions to conjugator length functions.  This  achieves matching upper and lower bounds but, for reasons we will explain at the end of this section, it too is not fully satisfactory.

\begin{thm} \label{central distortion versus CL}
 Let $\Lambda$ a  group with finite presentation $\langle A \cup \set{ \lambda} \mid R \rangle$ where $A$ is a finite set and $R$ is a finite set of words on $A \cup \set{ \lambda}$.  Assume that the subgroup $\langle \lambda  \rangle \leq \Lambda$ is central and infinite cyclic, and that its distortion function satisfies    $\CL_{\Lambda}   \preceq \Dist_{\< \lambda\>}^{\Lambda}$.    
  Then   the conjugator length function of  
$$\Sigma \ := \ \left\langle \  A \cup \{\lambda, p, q, r \} \ \mid \  R \cup \{  \,  [p,r], \  [q, \lambda  r  ],  \ [\lambda, r ]  \, \} \ \right\rangle$$  
satisfies   $\CL_{\Sigma} \ \simeq \ \Dist_{\< \lambda\>}^{\Lambda}$.
\end{thm}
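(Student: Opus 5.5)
The plan is to read $\Sigma$ as an iterated HNN-extension and then argue separately for the lower and upper bounds. The only relation involving $r$ inside $\langle A\cup\{\lambda,r\}\rangle$ is $[\lambda,r]$, and $\lambda$ is central in $\Lambda$. So this subgroup is $\Gamma:=\Lambda\ast_{\langle\lambda\rangle}\langle\lambda,r\rangle\cong\Lambda\ast_{\mathbb Z}\mathbb Z^2$. Then $\Sigma$ is obtained from $\Gamma$ by two HNN-extensions whose stable letters $p$ and $q$ commute with the cyclic subgroups $\langle r\rangle$ and $\langle\lambda r\rangle$ respectively. Both have $\alpha_i\equiv\beta_i$, so Corollary~\ref{cor:HNN simplify}\ref{item:3} applies to each letter in turn. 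There are also two retractions. Killing $p,q,r$ retracts $\Sigma$ onto $\Lambda$. Killing $A,\lambda,p,q$ gives a homomorphism $\Sigma\to\langle r\rangle\cong\mathbb Z$, since every word in $R$ maps to $1$.

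\emph{Lower bound.} Fix $n$, and let $\sigma$ be a word of length at most $n$ representing $\lambda^N$ with $N=\Dist_{\langle\lambda\rangle}^\Lambda(n)$. Because $\lambda$ and $r$ commute, $r^{-N}qr^{N}=\lambda^{N}r^{N}\,r^{-N}q\,r^{N}\lambda^{-N}\cdot(\text{corrections})$. More precisely, $(\lambda r)^N$ commutes with $q$ and $\lambda^N r^N=(\lambda r)^N$, so $q^{r^N}=q^{\lambda^{-N}}$. Since $r$ commutes with $p$, this gives $(pq)^{r^N}=p\,\sigma q\sigma^{-1}$. Take $u=pq$ and $v=p\sigma q\sigma^{-1}$, which have total length at most $2n+4$.

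To show every conjugator is long, I would analyse an annular diagram for this pair. By Corollary~\ref{cor:HNN simplify}, both $p$- and $q$-corridors can be taken to be radial, and they pair the $p$ of $u$ with the $p$ of $v$ and likewise for $q$. The corridor sides then force any conjugator to equal $(pq)^k r^{m}$, with $r^{m}\in\langle r\rangle$ satisfying $r^{m}\lambda^{N}\in\langle\lambda r\rangle$ modulo $\langle\lambda\rangle$-central adjustments inside $\Gamma$. Reading off exponents should force $m=N$. The $r$-exponent-sum homomorphism then gives length at least $N$.

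\emph{Upper bound.} Take $u\sim v$ with $|u|+|v|\le n$, and replace them by $u',v'$ as in Corollary~\ref{cor:HNN simplify}\ref{item:3}, once for $p$ and once for $q$.

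If $u'$ contains no $p^{\pm1}$ or $q^{\pm1}$, the problem reduces to conjugacy in $\Gamma=\Lambda\ast_{\langle\lambda\rangle}\mathbb Z^2$ with central amalgamated subgroup. Collins-type reasoning in the amalgam reduces this either to conjugacy in $\Lambda$, which costs $\CL_\Lambda\preceq\Dist$ by hypothesis, or to conjugacy in $\mathbb Z^2$, or to cyclic permutations of normal forms. In every case the only unbounded ingredient is a power $\lambda^k$ or $r^k$ with $|k|$ bounded by a constant times $\Dist(n)$, and $\lambda^k$ is cheap to write via $\Lambda$.

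Otherwise all $p$- and $q$-corridors are radial, as in the proof of Theorem~\ref{Stallings CL}. The conjugator is then determined by one side $w_l$ of a corridor, an element of $\Gamma$, and the remaining sides are $w_i=(\text{prefix of }u)^{-1}w_l(\text{prefix of }v)$. These must lie in $\langle r\rangle$ or $\langle\lambda r\rangle$, according to the corridor letter. So the set of admissible $w_l$ is the intersection of a coset of the centralizer of $\bar u$ with these linear conditions on exponents of $r$ and $\lambda$. As in Theorem~\ref{Stallings CL}, I would solve the resulting linear Diophantine constraint with Lemma~\ref{linear Diophantine equation}. Its coefficients are $r$- and $\lambda$-exponents of subwords of $u,v$, and these are at most $\Dist(n)$ (for $\lambda$, via the retraction to $\Lambda$). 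This yields a conjugator of length $\preceq\Dist(n)$.

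The main obstacle is this last step. One has to describe the centralizers and conjugators in $\Gamma$ precisely enough that the corridor constraints become a single low-dimensional linear system, with coefficients controlled by $\Dist$ and not by something larger such as products of two such exponents. This is exactly where the hypothesis $\CL_\Lambda\preceq\Dist$ should be consumed. I would first handle the case in which $\bar u$ is elliptic in the amalgam, and then the hyperbolic case, where centralizers are virtually cyclic times $\langle\lambda\rangle$.
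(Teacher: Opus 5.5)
Your lower bound is essentially sound. The pair $pq\sim p\sigma q\sigma^{-1}$ is the paper's example $pq^{-1}\sim pwq^{-1}w^{-1}$ in disguise, and your $r$-exponent-sum argument is an algebraic version of the paper's count of nested essential $r$-annuli. Two details should be written out. First, the disc between the two radial corridors has boundary giving $(\lambda r)^{\gamma}=r^{\beta}\lambda^{N}$, hence $\gamma=\beta=N$ via the two retractions. Second, these corridors are automatically radial, since $u$ and $v$ each contain a single $p$ and a single $q$. This matters because a lower bound must hold for every annular diagram, so Corollary~\ref{cor:HNN simplify} cannot be used to select a convenient one.

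The genuine gap is the upper bound when $u$ or $v$ contains $p^{\pm1}$ or $q^{\pm1}$. You leave this as an acknowledged obstacle, and the template of Theorem~\ref{Stallings CL} does not carry over. There, $t$ centralizes the kernel $K$ of a homomorphism to $\mathbb{Z}$, so the corridor constraints collapse to a single linear equation. Here the corridor sides must lie in the cyclic subgroups $\langle r\rangle$ or $\langle\lambda r\rangle$ of $\Gamma$. These are membership conditions, not linear conditions on exponents. Even a successful parametrization by centralizer generators would, as in Stallings' group, multiply a Diophantine solution of size up to $\Dist(n)$ by generator lengths up to $n$. That gives roughly $n\cdot\Dist(n)$, not $\Dist(n)$. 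You also misplace the hypothesis $\CL_\Lambda\preceq\Dist$: it is consumed only in the $p,q$-free case, via $\CL_\Gamma(n)\le\CL_\Lambda(n)+2n$, not in this step.

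The missing idea is that no centralizer or Diophantine analysis is needed. Work in a minimal-area annular diagram whose $p$- and $q$-corridors are all radial. A conjugator can be read along a corridor side, and its length is controlled by a three-case corridor argument.

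\emph{Both kinds of corridor occur.} An adjacent $p$-corridor and $q$-corridor bound a disc with boundary word $r^{\beta}u_0(\lambda r)^{-\gamma}v_0^{-1}$. Killing $p,q,r$ shows that $\lambda^{\gamma}$ equals in $\Lambda$ a word of length at most $n$. So $|\gamma|\le\Dist(n)$, and the path along the $q$-corridor gives $\CL_\Sigma(u,v)\le 2\Dist(n)+n$.

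\emph{Only $p$-corridors occur.} There are no $[q,\lambda r]$-cells, so any $r$-annulus is made of $[p,r]$- and $[\lambda,r]$-cells. It has the same word on both boundaries and can be excised. Hence there are at most $n/2$ $r$-corridors. Each crosses the $p$-corridor $C$ at most twice: three crossings yield a quadrilateral $x_1w_1x_2^{-1}w_2^{-1}$, where the retractions onto $\langle r\rangle$ and onto $\Lambda\ast\langle p\rangle$ force $x_1\equiv x_2$ and $w_1\equiv w_2$. That quadrilateral could then be cut out, contradicting minimality. Thus $C$ has length at most $n$ and $\CL_\Sigma(u,v)\le 2n$.

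\emph{Only $q$-corridors occur.} Substitute $\hat r=\lambda r$ and repeat the previous case, at the cost of a factor of $2$.
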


Before we prove Theorem~\ref{central distortion versus CL}, consider  $\Lambda' = \langle \  A \cup \set{ \lambda, r} \mid R \cup \set{ [\lambda , r]} \ \rangle$, where $r$ is a symbol  not in $A \cup \set{ \lambda}$.    

\begin{lemma} \label{CL reduction}    $\CL_{\Lambda'}\mleft(n\mright)\leq  \CL_{\Lambda}\mleft(n\mright) +  2n$ for all $n \in \N$.  
\end{lemma}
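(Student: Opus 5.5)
Since $\lambda$ and $r$ commute, $\Lambda'$ is the HNN-extension $\Lambda\dot{\ast}_{\langle\lambda\rangle}$ with stable letter $r$ commuting with the subgroup $\langle \lambda\rangle$, presented exactly as in \eqref{eq:pres hnn} with $k=1$ and $\alpha_1\equiv\beta_1\equiv\lambda$. The plan is therefore to apply Corollary~\ref{cor:HNN simplify}\ref{item:3}, with $r$ playing the role of $t$, and then treat the two resulting cases separately. If $u$ or $v$ represents $1$ there is nothing to prove, since then $u=v$ in $\Lambda'$ and $\CL(u,v)=0$. So let $u\sim v$ in $\Lambda'$ be non-trivial with $|u|+|v|\le n$.

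Corollary~\ref{cor:HNN simplify}\ref{item:2},\ref{item:3} supplies words $u',v'$ with the following properties:
\begin{itemize}
\item $|u'|\le|u|$ and $|v'|\le |v|$;
\item $\CL_{\Lambda'}(u,v)\le \CL_{\Lambda'}(u',v')+|u|+|v|$;
\item there is an annular diagram $\Omega'$ for $u'\sim v'$ whose only $r$-corridors are radial.
\end{itemize}

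\emph{Case 1: $u'$ contains no $r^{\pm1}$.} Then the last clause of \ref{item:3} gives $\CL_{\Lambda'}(u',v')=\CL_\Lambda(u',v')\le \CL_\Lambda(n)$. Hence
\[
\CL_{\Lambda'}(u,v)\le \CL_\Lambda(n)+n .
\]

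\emph{Case 2: $u'$ contains some $r^{\pm1}$.} Then $\Omega'$ has at least one radial $r$-corridor. The only relator involving $r$ is $[\lambda,r]$, so each $2$-cell of such a corridor has a single $\lambda$-edge on each side. Thus the side of a radial corridor is a path from a vertex on the $u'$-cycle to a vertex on the $v'$-cycle labelled by a power $\lambda^k$.

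The key observation is that $\lambda$ is central in $\Lambda'$: it is central in $\Lambda$ by hypothesis, and it commutes with $r$. Cutting $\Omega'$ along this corridor side, as in Figure~\ref{radial t-corridors diagram}, gives cyclic permutations $\tilde u$ of $u'$ and $\tilde v$ of $v'$ with $\tilde u\lambda^k=\lambda^k\tilde v$, so $\tilde u=\tilde v$ in $\Lambda'$. Write $u'\equiv xy$ and $\tilde u\equiv yx$; conjugating by the prefix $x$ takes $u'$ to $\tilde u$. Doing the same on the $v'$ side and using $\tilde u=\tilde v$, one gets a conjugator between $u'$ and $v'$ of length at most $|u'|+|v'|\le n$. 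A single cyclic-permutation cost of $|u'|$ may suffice, but the cruder bound is enough here. In this case
\[
\CL_{\Lambda'}(u,v)\le n+n=2n .
\]

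Combining the two cases gives $\CL_{\Lambda'}(n)\le \CL_\Lambda(n)+2n$.

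The main point needing care is Case~2: one must check that corridor sides are genuinely labelled by powers of $\lambda$ and that centrality of $\lambda$ in all of $\Lambda'$ collapses the conjugator to the cyclic-permutation cost. The reduction via Corollary~\ref{cor:HNN simplify} is otherwise routine.
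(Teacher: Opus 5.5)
Your proof is correct and follows essentially the same route as the paper. You reduce via Corollary~\ref{cor:HNN simplify} at a cost of at most $n$, then cut along a radial $r$-corridor and use centrality of $\lambda$ in $\Lambda'$ to get a conjugator of length at most $n$; you also spell out the $r$-free case (via the last clause of part~\ref{item:3}), which the paper's proof leaves implicit in its phrase ``it suffices to analyze.''
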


\begin{proof}
In this scenario, $\Lambda' = \Lambda \dot{\ast}_{\langle \lambda \rangle} $ is an HNN-extension of $\Lambda$ with stable letter $r$, so we can apply  Corollary~\ref{cor:HNN simplify} to see that,  at a cost of adding at most $n$ to conjugator length, it suffices to analyze the conjugator lengths of pairs of words $u$ and  $v$  on  $(A \cup \set{\lambda,r})^{\pm 1}$ that represent non-identity conjugate  elements of $\Lambda'$ and admit     an annular diagram  $\Omega$ in which (1) there is at least one $r$-corridor, and (2) all the $r$-corridors are \emph{radial} (meaning that it is a chain of faces connecting an edge labeled by $r^{\pm 1}$ in the  boundary component labeled by $u$ to an  $r^{\pm 1}$ in that labeled by $v$).

Cutting such an $\Omega$ along one side of  a radial $r$-corridor gives a van~Kampen diagram for $\lambda^m    \hat{u} \lambda^{-m} \hat{v}^{-1}$  for some  cyclic permutations   $\hat{u}$ and $\hat{v}$  of $u$ and $v$ (respectively) and some $m \in \Z$.  
But $\lambda$ is central in $\Lambda'$, so $\hat {u} = \hat{v}$ in $\Lambda'$, and so $\CL_{\Lambda'}(u,v) \leq n$.  
\end{proof}

\begin{proof}[Proof of Theorem~\ref{central distortion versus CL}.]
First we will show  that  $\Dist_{\< \lambda\>}^{\Lambda} \preceq \CL_{\Sigma}$. 
Suppose $w$ is a  word on $\left(A  \cup \set{ \lambda}\right)^{\pm 1}$ equal to $\lambda^{\alpha}$ in $\Lambda$ for some $\alpha \in \Z$.     Then $pq^{-1} \sim pwq^{-1}w^{-1}$  in $\Sigma$ via an annular diagram shown schematically in Figure~\ref{binary tree}.      
We claim that this and indeed \emph{any}  annular diagram $\Omega$ demonstrating $pq^{-1} \sim pwq^{-1}w^{-1}$ in $\Sigma$ contains $r$-annuli that are essential  (meaning they circle the hole in $\Omega$) and are   nested to a depth of $\abs{\alpha}$.  Then $\Dist_{\< \lambda\>}^{\Lambda}\mleft(n\mright)     \leq    \CL_{\Sigma}\mleft(n\mright)$ will follow because any path through $\Omega$ connecting $\ast_u$ to $\ast_v$ must cross each of those annuli.

There is one radial $p$-corridor  and one radial  $q$-corridor in $\Omega$.  The word along the sides of this $p$-corridor may or may not be freely reduced, but  must freely equal $r^{\beta}$ for some $\beta \in \Z$. Likewise, the word along the sides of the $q$-corridor must freely equal $(\lambda r)^{\gamma}$ for some $\gamma \in \Z$.  
(It may be that the $p$- or $q$-corridor has  zero area, in which case $\beta =0$ or $\gamma=0$.)
But then  $r^{-\beta}(\lambda r)^{\gamma}  = w$ in $\Sigma$, so $(\lambda r)^{\gamma} =    r^{\beta}  \lambda^{\alpha}$. (See Figure \ref{binary tree}.)
By killing $p$, $q$,  and $r$, we see that  $(\lambda r)^{\gamma} =    r^{\beta}  \lambda^{\alpha}$  in $\Sigma$  induces an equality  in $\langle \lambda \rangle = \Z$ which tells us that   $\gamma = \alpha$.    
We can then map onto $\langle r\rangle =\Z$ by killing all other generators, giving $\beta = \gamma$.
No $r$-corridors terminate on the boundary of $\Omega$, so all form $r$-annuli.
If   an $r$-corridor emanating from the side of the radial $p$-corridor intersects the radial $p$-corridor twice, then there would be a sub-disc-diagram whose boundary word is $x y^{-1}$ for some word $x$ on $r^{\pm 1}$ read along a portion of the side of that radial $p$-corridor, and a word $y$ along a portion of the side of that $r$-corridor (as in the left annular diagram of Figure~\ref{fig:radial-spiral-intersection}).  But $y$ contains no  $r$ and so $x$ freely equals the empty word on account of the retraction $\Sigma \onto \langle r \rangle =\Z$.
 It follows that  there must be circling $r$-corridors  nested to a depth $\abs{\alpha}$.    
 
\begin{figure}[ht]
\begin{overpic}[scale=1.0,unit=1mm]{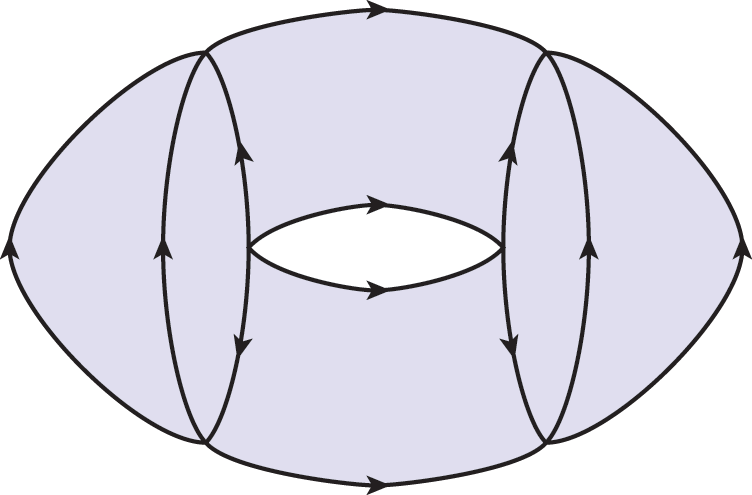}
 \put(31,-2){\small{$p$}}     
 \put(31,14){\small{$p$}}     
 \put(31,27){\small{$q$}}     
 \put(31,43){\small{$q$}}    
  \put(-3,20){\small{$w$}}     
   \put(64,20){\small{$w$}}     
   \put(9.5,20){\small{$\lambda^{\alpha}$}}
      \put(51,20){\small{$\lambda^{\alpha}$}}
     \put(21.3,29){\small{$(\lambda r)^{\gamma}$}}
          \put(35.4,29){\small{$(\lambda r)^{\gamma}$}}
     \put(21.5,11){\small{$r^{\beta}$}}
          \put(38.7,11){\small{$r^{\beta}$}}
\end{overpic}
 \caption{An annular van Kampen diagram over the group $\Sigma$.}
  \label{binary tree}
\end{figure}

Now we prove the reverse bound, namely  $\CL_{\Sigma} \preceq \Dist_{\< \lambda\>}^{\Lambda}$. 
Suppose $u$ and $v$ is a pair of words  with  $u \sim v$ in $\Sigma$.  Let $n = |u| + |v|$.  
 
If $u$ and $v$ both contain no letters $p$ or $q$, then they are conjugate in $\Lambda'$ and $\CL_{\Sigma}\mleft(u,v\mright) \leq \CL_{\Lambda'}\mleft(u,v\mright)$, and Lemma~\ref{CL reduction} tells us that this is within the required bound given that $\CL_{\Lambda}   \preceq \Dist_{\< \lambda\>}^{\Lambda}$.

Suppose now that  $u$ and $v$ together contain at least one letter $p$ or $q$.  Let $\Omega$ be an annular diagram  of minimal area (that is, minimal number of faces) for $u$ and $v$. Now, $\Sigma$ is a multiple HNN-extension of $\Lambda'$  with  stable letters $p$ and $q$. So, given  Corollary~~\ref{cor:HNN simplify}, we may assume that every letter $p$ and $q$ in $u$ or $v$ is the start of a radial corridor in $\Omega$ and $\Omega$ contains no $p$- or $q$-annulus.
 We will prove that then 
\begin{equation}
\CL_{\Sigma}\mleft(u,v\mright) \ \leq \  \Dist_{\langle \lambda \rangle}^{\Lambda}\mleft(n\mright) +    \CL_{\Lambda'}\mleft(n\mright)  +  4n.  \label{bound we want}
\end{equation}
The result will then follow by Lemma~\ref{CL reduction}.  We will consider three cases.

\emph{Case 1: $\Omega$ contains both a   $p$- and a   $q$-corridor.}   As $p$- and $q$-corridors cannot cross (as no defining relation contains both $p$ and $q$ letters),   such a pair of radial corridors   together with portions of the inner and outer boundary of $\Omega$ bound a disc sub-diagram $\Omega_0$  of $\Omega$ with the following properties. 
 Along the sides of these $p$- and  $q$-corridors we read  words  
$r^{\beta}$  and  $\left(\lambda r\right)^{\gamma}$, respectively. (Since $\Omega$ is of minimal area these words  will be reduced and so are powers or $r$ and $\lambda r$, respectively.)  
The word one reads around the boundary circuit of $\Omega_0$ is $r^{\beta} u_0 \left(\lambda r\right)^{-\gamma} {v_0}^{-1}$ for some subwords $u_0$ and $v_0$ of some cyclic permutations of $u$ and $v$, respectively.    Killing $p$, $q$ and $r$, we get that $\lambda^{\gamma}$ equals in $\Lambda$ a word of length at most $n$, being that it is obtained from ${v_0}^{-1}u_0$ by deleting some letters.  So $\abs{\gamma} \leq  \Dist_{\langle \lambda\rangle}^{\Lambda}\mleft(n\mright)$, and therefore $\CL_{\Sigma}\mleft(u,v\mright) \leq {2} \Dist_{\langle \lambda\rangle}^{\Lambda}\mleft(n\mright) +n$, which is within the required bound \eqref{bound we want}.

\emph{Case 2: $\Omega$ has a radial $p$-corridor $C$ but no   $q$-corridor.}
Because $\Omega$ contains no  $q$-corridor, it can have no face labelled by the relator $\left[q,\lambda r\right]$.  Therefore any $r$-annulus in  $\Omega$ would be comprised of faces labelled by $\left[p,r\right]$ and $\left[\lambda, r \right]$.  But any such annulus would have  the same word along its inner and outer boundaries, so it could be excised and the boundaries glued, reducing the area of $\Omega$. (See Section~\ref{sec:removing t-rings} for more precise versions of this argument.)
Any $r$-corridor in $\Omega$ starts and ends on the boundary, so there are at most $n/2$ such corridors.
  We   claim that each $r$-corridor intersects $C$ at most twice. 
As   $C$  consists entirely of cells labelled $\left[p,r\right]$, this will then imply that $\CL_\Sigma\mleft(u,v\mright)$ is at most the length  of $C$ (i.e.\ number of cells it contains or, equivalently, the length of the word along both of its sides) plus $n$, and so at most $2n$, which will also be within the  bound \eqref{bound we want}.

To prove that an $r$-corridor can intersect $C$ at most twice, first note that the word along the side of $C$ is a power of $r$ (since $\Omega$ is minimal area and so reduced).  Therefore if an $r$-corridor   crosses $C$, it must spiral around the diagram before crossing it again.

Next suppose an $r$-corridor $D$  intersects $C$ at least three times as illustrated in the right annulus in Figure~\ref{fig:radial-spiral-intersection}. 
Then there is a sub-van~Kampen diagram (shown shaded darker in the figure) whose boundary is labelled by a word
$x_1w_1 x_2^{-1}w_2^{-1}$ where  $w_1$ and $w_2$ are words on $\set{\lambda, p}^{\pm 1}$ read along portions of the sides of $D$ and $x_1$ and $x_2$ are words on  $r^{\pm 1}$ read along  portions of the sides of $C$.  Moreover $w_1$, $w_2$, $x_1$, and $x_2$ are reduced words since $\Omega$ is minimal area (and so reduced).    
As there are no $q$-corridors, $x_1w_1x_2^{-1}w_2^{-1}=1$ in $$\Lambda'' \ := \ \langle \ A \cup \set{ \lambda, p,  r }  \, \mid     \, R \cup \set{ \, \left[\lambda, r \right], \ \left[p,r\right] \, }    \ \rangle.$$
 
On account of the retraction  $\Lambda'' \onto \langle r \rangle = \Z$ we have that $x_1 =x_2$ (as words).    
And since the letters of $w_i$ commute with the letters of $x_i$ and $\Lambda''$ also retracts onto the free product of $\Lambda$ with $\Z = \langle p \rangle$ (on killing $r$), we  deduce that $w_1=w_2$ (as words).
But then we could cut this quadrilateral out of $\Omega$ and obtain a smaller area annular diagram for $u\sim v$ by identifying the $x_1$ and $x_2$ paths and identifying the  $w_1$ and $w_2$ paths, contrary to the minimality of the area of $\Omega$.   (This is explained more precisely in Remark~\ref{cut and replace remark}.)   We conclude that indeed    an $s$-corridor can intersect $C$ at most twice.

\begin{figure}[ht]
\begin{overpic}[scale=1.0,unit=1mm]{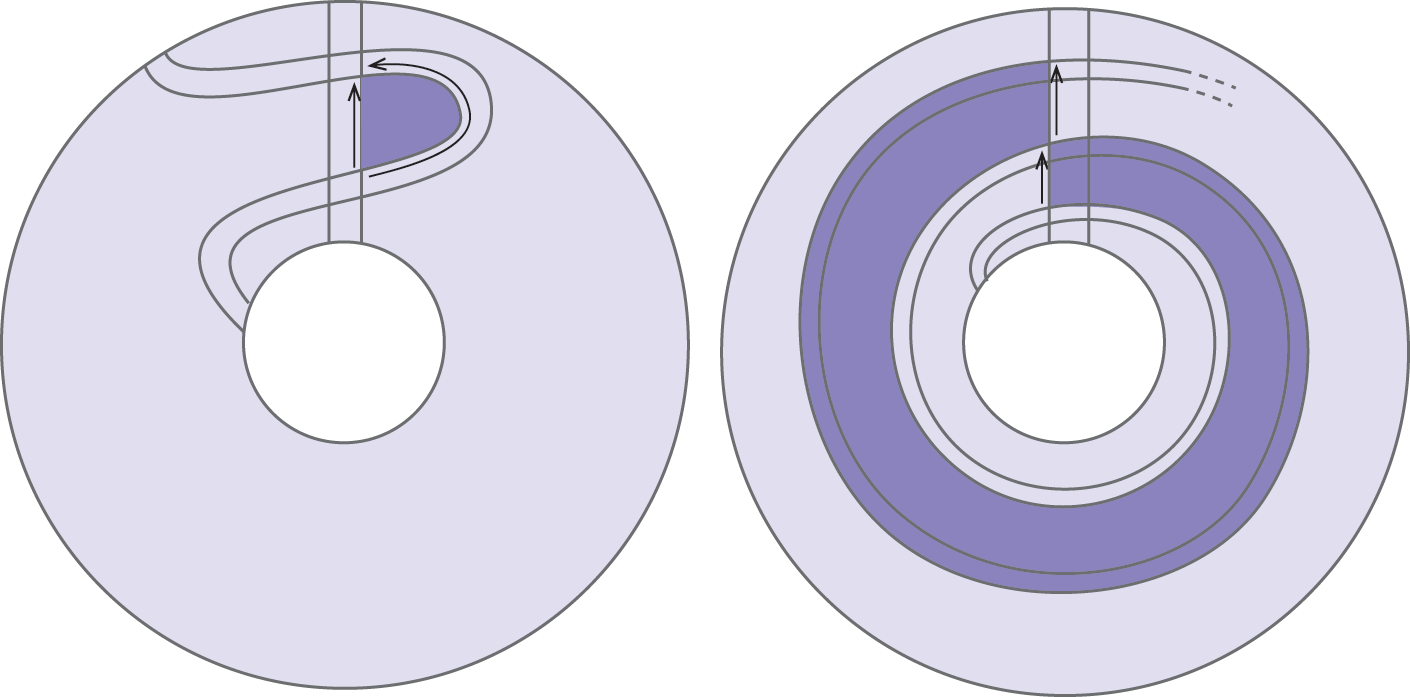}
 \put(28,36){\small{$p$}}     
 \put(28,60){\small{$p$}}     
 \put(90,36){\small{$p$}}     
 \put(90,59.5){\small{$p$}}     
 \put(28,48){\small{$x$}}     
 \put(85.5,42.7){\small{$x_1$}}     
 \put(89.6,49){\small{$x_2$}}     
 \put(40,49){\small{$y$}}     
 \put(21,31){\small{$r$}}     
 \put(11.5,55){\small{$r$}}     
 \put(83,33){\small{$r$}}     
	\end{overpic}
	\caption{Prohibited $s$-corridors in $\Omega$.}
	\label{fig:radial-spiral-intersection}
\end{figure}

\emph{Case 3: $\Omega$  has a radial $q$-corridor but no   $p$-corridor.}   
	Let $\hat{r}  =  \lambda  r  $.   Then 	
	$$\Sigma \  = \ \langle \  A \cup { \lambda, p, q, \hat{r} }  \, \mid \, R \cup \set{ \, \left[\lambda, \hat{r} \right], \, \left[p, {\lambda}^{-1} \hat{r} \right], \  \left[q, \hat{r} \right] \, }  \ \rangle$$
	since the  relations  $\left[\lambda, r \right]=1$ and   $\left[ \lambda, \hat{r} \right]=1$ are equivalent.  Let $\hat{u}$ and $\hat{v}$ be $u$ and $v$ (respectively) with  $\lambda^{-1} \hat{r}$ in place of each letter $r$.  
	Then $\hat{u}$ and $\hat{v}$ both include the letter $q$, but not $p$.
	We can therefore find a minimal area annular diagram $\hat{\Omega}$ for $\hat{u} \sim \hat{v}$ with respect to the new presentation for $\Sigma$.
	This annular diagram $\hat{\Omega}$ will have a radial $q$-corridor, but no $p$-corridors.
	The argument now follows that given above for when $\Omega$ has a radial $p$-corridor, but no $q$-corridor, interchanging the roles of $p$ and $q$, and replacing $r$ with $\hat{r}$.  Translating back to the original presentation by replacing letters $\hat{r}$ by $\lambda r$, our length estimates may double, leading to the bound $\CL_\Sigma\left(u,v\right) \leq 4n$, which is again within   \eqref{bound we want}. 
	\end{proof}

Theorem~\ref{central distortion versus CL}  might be expected to yield finitely presented groups displaying a wide range of examples of conjugator length functions.  After all,  the set of  distortion functions of $\Z$-subgroups of finitely presented groups is broad, as we explained at the start of this section,  and there are constructions which in many cases promote examples of subgroup distortion to examples of distortion of \emph{central} $\Z$-subgroups --- see, for example, 
the passage from $G$ to $\Lambda$  in (\ref{s: exp examples}).

However, while we expect the hypothesis  $\CL_{\Lambda}   \preceq \Dist_{\langle \lambda \rangle}^{\Lambda}$ to hold in great generality, it seems tricky to verify  in specific examples.  
  To get some examples of fast-growing conjugator length functions  which we can fully identify, we turn to other constructions. 

\subsection{Exponential growing conjugator length functions} \label{s: exp examples}

In \cite{BrRi3} the first two authors build the first examples of finitely presented group whose conjugator length functions are known to grow like $n \mapsto 2^n$.   One such group is 
 $$\Lambda = \left\< a_1, a_2, a_3, s, t, \lambda  \ \left| \ \parbox{7.2cm}{$s^{-1} a_1 s= a_1a_2 a_3,  \ s^{-1} a_2 s =  a_1,  \ s^{-1} a_3 s =  a_2,$  \\    $[t,\l]=1, \  [t,a_i]=a_i\l, \ \l$  central  } \right. \right\>,$$   which is a central extension of a group $G=F(a_1, a_2, a_3) \rtimes F(s,t)$ which is in turn an HNN-extension of the hyperbolic free-by-cyclic group 
$\langle a_1, a_2, a_3, s     \mid  s^{-1} a_1 s= a_1a_2a_3,  \ s^{-1} a_2 s =  a_{1}, \ s^{-1} a_3 s =  a_2 \rangle$.  Its central subgroup $\Z \cong \< \lambda \>$  is exponentially distorted in $\Lambda$.  A classification of element-centralizers in $G$ facilitates a case-by-case analysis of conjugators whereby one can reduce the problem of bounding their lengths to problems of bounding the sizes of solutions of linear Diophantine equations.

\subsection{Fast growing conjugator length functions from fibre products} \label{sec: fast fibers}

The recent paper \cite{BrRi3} also contains examples of finitely presented groups whose conjugator length functions are comparable to fast-growing functions that are representatives of every level of the Grzegorczyk hierarchy of primitive recursive functions. The levels of the hierarchy are represented by  
Ackermann functions $A_k$, defined recursively by  $A_0(n)= n+2,\ A_1(0)=0,\ A_k(0)=2$ for $k\ge 2$, and $A_{k+1}(n + 1) = A_k(A_{k+1}(n))$ for all $k,n\ge 0$.  So $A_1(n) =2n$, $A_2(n) =2^n$, $A_3(n)$ is a height-$n$ tower of powers of $2$, and so on.   
   
For $Q$  a finitely presented group, $p: G \to Q$  an epimorphism,  
and  $P$  
the associated fibre product, the following elementary lemma connects the word problem in $Q$, the membership problem for $P<G\times G$, and the conjugacy problem in $P$.  
\begin{lemma} Let $p:G\to Q$ be an epimorphism from a  group $G$ generated by $A=\{ a_1, \ldots, a_n \}$ 
 to a finitely presented group $Q$. Let $r_1,\dots,r_m$ be words in the free group 
 $F(A)$ that generate the kernel of the composition $F(A)\to G\to Q$ as a normal subgroup; so $Q\cong \<a_1,\dots,a_n
 \mid r_1,\dots, r_m\>$. Then, the
{\em fibre product}
$$
P:= \{(x,y)\in G \times G\mid p(x)=p(y)\}
$$
is generated by $\set{(a_1, a_1), \ldots, (a_n,a_n), (r_1,1), \ldots, (r_m,1)}$.

For words $w\in F(A)$,  the condition $w = 1$ in $Q$ is equivalent to $(w,1) \in P$.
And for any element $(g,g)\in G \times G$  whose centraliser is contained in $P$,  the condition $(w,1) \in P$ is
equivalent to   $(g,g) \sim (wgw^{-1},g)$ in $P$.
\end{lemma}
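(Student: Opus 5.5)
The plan is to prove the three assertions in turn, each by a short direct argument.

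\emph{Generation.} Let $P_0$ be the subgroup generated by the listed elements. Each generator lies in $P$: $p(a_i)=p(a_i)$ trivially, and $p(r_j)=1$ because $r_j$ lies in the kernel of $F(A)\to Q$. For the reverse inclusion, take $(x,y)\in P$ and write $y$ as a word $\eta$ in $A$. Then
$$(x,y)=(x\eta^{-1},1)\,(\eta,\eta),$$
where $(\eta,\eta)$ is a product of the diagonal generators $(a_i,a_i)$. Now $p(x\eta^{-1})=1$, so representing $x$ by a word $\xi$ gives $\xi\eta^{-1}\in\Ker(F(A)\to Q)$. This kernel is the normal closure of the $r_j$, so $\xi\eta^{-1}$ equals in $F(A)$ a product of conjugates $\omega r_j^{\pm1}\omega^{-1}$. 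Each such conjugate satisfies
$$(\omega r_j\omega^{-1},1)=(\omega,\omega)(r_j,1)(\omega,\omega)^{-1},$$
which lies in $P_0$. Hence $(x,y)\in P_0$.

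\emph{Word problem.} $(w,1)\in P$ means $p(w)=p(1)=1$. Since $Q\cong\langle A\mid r_1,\dots,r_m\rangle$, this is exactly $w=1$ in $Q$.

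\emph{Conjugacy.} First suppose $(w,1)\in P$. Then
$$(w,1)^{-1}(wgw^{-1},g)(w,1)=(g,g),$$
so the two elements are conjugate in $P$. Conversely, suppose $(h_1,h_2)\in P$ conjugates $(g,g)$ to $(wgw^{-1},g)$ in $P$; the direction of conjugation is immaterial. Also $(w,1)$ conjugates $(g,g)$ to $(wgw^{-1},g)$ in $G\times G$. Comparing the two conjugators, one of $(w,1)^{-1}(h_1,h_2)$ or $(h_1,h_2)(w,1)^{-1}$ centralises $(g,g)$, according to the convention used. By hypothesis the centraliser of $(g,g)$ lies in $P$, so this element is in $P$. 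Multiplying by $(h_1,h_2)^{\pm1}\in P$ shows $(w,1)\in P$.

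The only delicate step is this last bookkeeping of left versus right and of which element centralises which. A consistent convention, say $x^y=y^{-1}xy$ as in the paper, settles it. Everything else is routine.
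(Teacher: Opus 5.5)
Your proof is correct. The paper gives no proof of this lemma (it calls it elementary), and your three arguments are the routine verifications it leaves to the reader:
\begin{itemize}
\item the factorisation $(x,y)=(x\eta^{-1},1)(\eta,\eta)$ combined with $(\omega r_j\omega^{-1},1)=(\omega,\omega)(r_j,1)(\omega,\omega)^{-1}$;
\item reading $(w,1)\in P$ directly as $p(w)=1$;
\item comparing two conjugators modulo the centraliser.
\end{itemize}

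There is one small slip in the last step. Suppose $(h_1,h_2)^{-1}(g,g)(h_1,h_2)=(wgw^{-1},g)=(w,1)(g,g)(w,1)^{-1}$. Then the element that centralises $(g,g)$ is $(h_1,h_2)(w,1)$, not $(h_1,h_2)(w,1)^{-1}$. The latter need not centralise $(g,g)$: take $(h_1,h_2)=(w,1)^{-1}$, which gives $(w^{-2},1)$.

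This does not affect the conclusion. As you say, you may replace $(h_1,h_2)$ by its inverse, which is still in $P$. That reduces to the case $(h_1,h_2)(g,g)(h_1,h_2)^{-1}=(w,1)(g,g)(w,1)^{-1}$. There $(w,1)^{-1}(h_1,h_2)$ lies in the centraliser of $(g,g)$, hence in $P$, and so $(w,1)\in P$.
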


Bridson \cite{Bridson13} proved a quantified  version of  this lemma in the case where $G$ is torsion-free and hyperbolic. This
relates the conjugator length function of $P$,  the geometry of cyclic subgroups in $Q$, the Dehn function of $Q$, the {\em rel-cyclics Dehn function} of $Q$ introduced in \cite{Bridson14}, and the distortion of $P$ in $G \times G$. 

To obtain the representatives of the Grzegorczyk hierarchy  that we seek,  one can take
 $Q$ to be the $k$-th  \emph{hydra group} of \cite{DR},  which has Dehn function growing $\simeq A_k$, 
 and construct an epimorphism from a torsion-free hyperbolic group $G\to Q$ with finitely generated  kernel.  In
 this case,  $P$ is  finitely presentable and the estimates from \cite{Bridson13} 
 amount to $A_k(n) \preceq \CL_{P}(n) \preceq A_k(n^2)$.  This places $\CL_{P}(n)$ between two fast-growing functions which, when $k \geq 3$, are representatives of the same level of the  hierarchy. See  \cite{BrRi3}  for details.

 \subsection{Other geometric techniques}

We have seen in this survey that the geometric tools of diagrams, corridors,  and subgroup distortion, which are  well-established in the study of Dehn functions, are also powerful in the study of conjugator length.  A further
geometric tool for studying conjugator length can be found in \cite{BrRi4}, where bounds on conjugator length are established via an analysis of the distance between the inner and outer boundary of annular diagrams.  In that
setting, the analysis reduces to an argument about how rings of quadrilaterals can fit together, 
and estimates on the \emph{skewness} of these quadrilaterals provide bounds on commutator length.   
There are various similar geometric tools that one anticipates will be useful in other settings,  but most of these
arguments are coarse-geometric in nature, and  as we noted in Section~\ref{s: not coarse}, conjugator length is not a quasi-isometry  invariant,  so we should look beyond these sorts of techniques  in search of sharper results.  The methods of the next section are part of this broader search.

\subsection{Linear Diophantine equations and $\zeta$-maps}  \label{zeta maps}

In a number of   examples, the kernel of the problem of estimating conjugator length is estimating the minimal size of a solution to a consistent system of  linear Diophantine equations.    When the system has just one equation (B\'ezout --- see Lemma~\ref{linear Diophantine equation} here) such estimates are elementary.  Theorems~\ref{Heisenberg group thm} and \ref{Stallings CL} here are examples.  When there are more equations, as for example in \cite{BrRi1}, we look  to \cite{BFRT},  where the estimates stem from  Cramer's rule.   

Linear Diophantine equations emerge in \cite{BrRi1} from comparing normal forms.  In the central-extension examples of Sections~\ref{s:snowflakes} and \ref{s: exp examples} here, they arise in the study of centralisers and what we call \emph{$\zeta$-maps}.  Centralisers are pivotal for conjugator length because if $uw_0 =w_0v$ in a group $\Gamma$, then the set of \emph{all} $w$ such that $uw =wv$ in  $\Gamma$ is the coset  $C_{\Gamma}(u)w_0$ of the centraliser of $u$ in $\Gamma$.     

Suppose   that $\Gamma$ is a central extension $$
1 \to Z \to \Gamma \to G\to 1 
$$
with $Z\cong\Z$ generated by some central element $z \in \Gamma$. Suppose $\gamma \in \Gamma$  maps to $g\in G$.
For each element of the centralizer $x\in C_{G}(g)$ 
and all preimages $\tilde{x}, \tilde{g} \in \Gamma$,
we have $\tilde{x}^{-1}\tilde{g}\tilde{x} = \tilde{g} z^{m}$ in $\Gamma$ for some integer $m$ that only depends on $g$ and $x$  because alternate choices of $\tilde{x}$ and $\tilde{g}$ differ by a power of $z$. Thereby $\zeta_g(x) :=m$ defines a  homomorphism
$$
\zeta_g : C_G(g) \to \Z
$$
 with image $\set{ N \in \Z \mid  \gamma \sim \gamma z^N}$.

Now suppose $u$ and $v$ are words such that $u \sim v$ in $\Gamma$.  Then their images $\overline{u}$ and $\overline{v}$ in $G$ are  conjugate in $G$. Let $w \in \Gamma$ be such that its image  $\overline{w}$ in $G$ is a shortest conjugator in $G$ --- that, is  $\overline{u} \overline{w}= \overline{w} \overline{v}$ and $|\overline{w}| = \CL_G(\overline{u}, \overline{v})$.  Then  $uw = wv z^N$ in $\Gamma$ for some integer $N$, and so $u \sim v z^N$ in $\Gamma$.    We can estimate this $N$ using the Dehn function of $G$ thus: $|N| \leq \Area_G(\overline{u} \overline{w}\overline{v}^{-1} \overline{w}^{-1})$ by a well known argument.  And because,   $u \sim v$ we then have $v \sim v z^N$ and so $N$ is in the image of $\zeta_v$.  And then, given a set $X= \set{x_1, \ldots, x_m}$ of generators for $C_G(v)$ with set of lifts $\overline{X} = \set{\overline{x}_1, \ldots, \overline{x}_m}$ in $\Gamma$,  any $p = x_1^{\lambda_1} \cdots x_m^{\lambda_m}$ such that 
\begin{equation} \label{e:lde} 
\zeta_v(p) = \lambda_1 \zeta_v(x_1) + \cdots + \lambda_m \zeta_v(x_m) = N
\end{equation}
 lifts to a conjugator $\overline{p}$ for $v \sim v z^N$ in $\Gamma$ and we get that 
$$\CL_{\Gamma}(u,v) \leq \dehn(n + 2 \CL_{G}(\overline{u},\overline{v})) +  |\overline{p}|_{\Gamma},$$
and we can optimize the term $|\overline{p}|_{\Gamma}$ by making a propitious choices of $\lambda_1, \ldots, \lambda_m$ according to the restrictions afforded by the linear Diophantine equation \eqref{e:lde} (cf.\ Lemma~\ref{linear Diophantine equation}) and the distortion of the subgroups  $\overline{x}_1$, \ldots, $\overline{x}_m$ in $\Gamma$.

\subsection{$S$-machines} \label{sec:S-machines}

The most comprehensive answer to the question of what functions arise as the  conjugator length functions of  finitely presented groups was recently achieved by Gillis \& Wagner \cite{GiWa}.  They took up the challenge, put forward by the second author at the GAGTA 2025 conference, of constructing wide-ranging examples by using the $S$-machine technology that originates in \cite{SBR}.

Gillis \& Wagner showed that for all real numbers $\alpha \geq  2$ whose decimal expansion is  computable in double-exponential time, there exists a finitely presented group whose conjugator length function grows like $n^{\alpha}$.  More generally, any function that grows at least quadratically and can be realized as the time function of an $S$-machine is the conjugator length function (up to $\simeq$) of a finitely presented group $G$. Indeed, the set of conjugator length functions  of  finitely presented groups includes (up to $\simeq$) the set of all Dehn functions.  In a way made precise in \cite{SBR}, that set is close to being the set of all time-functions of Turing machines.
Here's an outline of Gillis \& Wagner's  approach.    

$S$-machines are groups that have coded into their finitely presentations, certain non-deterministic tape-computation machines which are comparable to multi-tape Turing machines.  These machines are realized   as  multiple HNN-extension of  free groups, so that  runs of tape-computations are displayed within van~Kampen diagrams as stacks of corridors.  In \cite{SBR}  and many  subsequent $S$-machine designs, these stacks appear as `sectors' within  systems of concentric annular corridors   arranged around a central `hub' relation which corresponds to the machine's accept state.    This leads to time and space complexity of the computing machine being related to the Dehn function of $G$.     

Gillis \& Wagner's  $S$-machines of \cite{GiWa} follow a line of $S$-machines that began with \cite{OS5}; they absent the hubs rendering these diagrams annular.   The run times of the machines are the heights of the stacks, and so give the conjugator length as witnessed in these annular diagrams.     

In common with other papers on $S$-machines,  delivering on the outline strategy requires a formidable technical struggle.  In order to bound  conjugator length for the groups, some  understanding of diagrams for \emph{all} conjugate pairs of words is required and there is wide scope for pathologies. For example, instead of forming concentric annuli, corridors might spiral, a  possibility which   Gillis \& Wagner curb by arranging for secondary computations to take place so as to give rise to what they call `history sectors' in the annular diagrams.        

Gillis \& Wagner have recently taken their techniques in another direction in \cite{GiWa2} for further breakthroughs which address questions we raised in earlier versions of this survey --- we referenced these in the contexts of earlier discussions in Sections~\ref{the CP and algorithms} and \ref{sec: rel to other invariants}.  They construct finitely presented groups $G$ with large Dehn functions but tightly constrained conjugator length functions.  The most extreme instance is an example where the word problem is undecidable (and so the same is true of the conjugacy problem), and yet the conjugator length function is quadratic.   

In this $S$-machine construction the hub relations are present.  The underlying computational machine runs the halting problem (say) and, the undecidability of that problem conveys to the word problem in $G$.  However, the construction is such that long computations (i.e., a high stack of corridors) only occur starting from a word $u$ if that computation halts, which is to say that $u$ is conjugate to a hub relation and therefore $u=1$  in $G$.  So if a word  $v$ satisfies $u \sim v$ in  $G$,   then $u=v=1$  in $G$, and so $\CL(u,v) =0$.  If, on the  other hand, $u \neq 1$, then the computation is short in such a way that there must be an annular diagram for $u$ and $v$ with small, indeed quadratically bounded, conjugator length.

\bibliographystyle{alpha}
\bibliography{bibli}

\ni {Martin R.\ Bridson} \\
Mathematical Institute, Andrew Wiles Building, Oxford OX2 6GG, United Kingdom. {bridson@maths.ox.ac.uk}, \
\href{http://www2.maths.ox.ac.uk/~bridson/}{https://people.maths.ox.ac.uk/bridson/}

\ni  {Timothy R.\ Riley} \rule{0mm}{6mm} \\
Department of Mathematics, 310 Malott Hall,  Cornell University, Ithaca, NY 14853, USA.  {tim.riley@math.cornell.edu}, \
\href{http://www.math.cornell.edu/~riley/}{http://www.math.cornell.edu/$\sim$riley/}
 
  \ni {Andrew W.\ Sale} \rule{0mm}{6mm} \\
  Department of Mathematics,
  University of Hawaii at Manoa,
  Honolulu, HI 96822, USA.   
  \href{http://math.hawaii.edu/~andrew/}{http://math.hawaii.edu/$\sim$andrew/}

 \end{document}